\setlist[itemize]{noitemsep}
\newtheorem{theoremA}{Theorem}
\newtheorem{theoremB}{Theorem}
\newtheorem{theoremC}{Theorem}
\newcommand{\irow}[1]{% inline row vector
  \begin{smallmatrix}(#1)\end{smallmatrix}%
}
\newtheorem{procedure}{Procedure}
\pgfplotsset{compat=1.15}
\numberwithin{equation}{section}
\theoremstyle{plain}
\newtheorem{theorem}{Theorem}[section]
\newtheorem{proposition}[theorem]{Proposition}
\newtheorem{corollary}[theorem]{Corollary}
\newtheorem{lemma}[theorem]{Lemma}
\newtheorem{notation}[theorem]{Notation}
\newtheorem{definition}[theorem]{Definition}
\theoremstyle{definition}
\newtheorem{remark}[theorem]{Remark}
\newtheorem{example}[theorem]{Example}
\newcommand\restr[2]{{% we make the whole thing an ordinary symbol
  \left.\kern-\nulldelimiterspace % automatically resize the bar with \right
  #1 % the function
  \vphantom{\small|} % pretend it's a little taller at normal size
  \right|_{#2} % this is the delimiter
  }}
\def\mathcenterto#1#2{\mathclap{\phantom{#1}\mathclap{#2}}\phantom{#1}}
\let\old@widetilde\widetilde
\def\widetildeto#1#2{\mathcenterto{#2}{\old@widetilde{\mathcenterto{#1}{#2\,}}}}
\let\old@widehat\widehat
\def\widehatto#1#2{\mathcenterto{#2}{\old@widehat{\mathcenterto{#1}{#2\,}}}}
\newcommand{\ip}[2]{\langle  #1,#2 \rangle} % \ip a b le pone < >
\newcommand{\is}[1]{\langle  #1 \rangle} % \ip a b le pone < >
\newcommand{\size}[1]{\left| #1 \right|} % \size a b le pone | |
\newcommand{\pare}[1]{\left( #1 \right)} % \pare a b le pone ( )
\newcommand{\set}[1]{{\left\{ #1 \right\}}} % \set a b le pone { }
\newcommand{\corch}[1]{\left[ #1 \right]} % \pare a b le pone [ ]
\newcommand*\closure[1]{\overline{#1}}
\DeclareMathOperator{\rank}{rk}
\def\dim{\operatorname{dim}}
\DeclareMathOperator{\CC}{\mathbb{C}}
\DeclareMathOperator{\PP}{\mathbb{P}}
\DeclareMathOperator{\dnaive}{\textup{d}^{\textup{naive}}}
\def\Id{\operatorname{Id}}
\title{Irreducibility, Smoothness, and Connectivity of Realization Spaces of Matroids and Hyperplane Arrangements}
\author{Emiliano Liwski and Fatemeh Mohammadi}
\date{}
\begin{document}
\maketitle

\begin{abstract}
We study the realization spaces of matroids and hyperplane arrangements. First, we define the notion of naive dimension for the realization space of matroids and compare it with the expected dimension and the algebraic dimension, exploring the conditions under which these dimensions coincide. Next, we introduce the family of inductively connected matroids and investigate their realization spaces, establishing that they are smooth, irreducible, and isomorphic to a Zariski open subset of a complex space with a known dimension. Furthermore, we present an explicit procedure for computing their defining equations. As corollaries, we identify families of hyperplane arrangements whose moduli spaces are connected. Finally, we apply our results to study the rigidity of matroids. 
Rigidity, 
which involves matroids with a unique realization under projective transformations, is key to understanding the connectivity of the moduli spaces of the corresponding hyperplane arrangements. 
\end{abstract}

\section{Introduction}
This work focuses on the realization spaces of matroids and hyperplane arrangements, particularly on their geometric features, such as smoothness, irreducibility, and defining equations as algebraic sets, along with the topological properties of the complements of their associated hyperplane arrangements, including their combinatorial characterization and moduli space connectivity.

\smallskip
A matroid serves as a combinatorial framework for understanding linear dependence within a vector space; see e.g.,~\cite{Oxley}. When we have a finite collection of vectors in a specific vector space, the sets of linearly dependent vectors form a matroid. If we can reverse this process, meaning that for a given matroid $M$ we can identify a corresponding collection of vectors, we refer to these vectors as a realization of $M$. We denote the space of realizations of $M$ as $\Gamma_M$. The matroid variety $V_M$ represents the Zariski closure of the realization space, exhibiting a rich geometric structure. Matroid varieties were first introduced in \cite{gelfand1987combinatorial} and have since been extensively studied; see~e.g.,~\cite{sturmfels1989matroid,lee2013mnev,clarke2021matroid,sidman2021geometric}. 
\smallskip

Matroid varieties, and their associated ideals, have also been studied in the context of determinantal varieties in commutative algebra, \cite{bruns2003determinantal,clarke2020conditional,clarke2021matroid,herzog2010binomial,pfister2019primary,ene2013determinantal}, and they have applications in conditional independence models in algebraic statistics, \cite{Studeny05:Probabilistic_CI_structures,DrtonSturmfelsSullivant09:Algebraic_Statistics,Sullivant,clarke2021matroid,clarke2020conditional,clarke2022conditional,caines2022lattice, mohammadi2018prime}. 

\smallskip
One of the central questions regarding realization spaces of matroids is computing their irreducible decompositions and identifying families of matroids for which the associated varieties are irreducible. Another important question is determining their defining equations. Classical methods to tackle this problem include Grassmann-Cayley algebra~\cite{computationalgorithms,sidman2021geometric} and the geometric liftability technique~\cite{richter2011perspectives,Emiliano-Fatemeh4}.
In this paper, we introduce the family of inductively connected matroids and show the irreducibility of their realization spaces. We also outline a procedure for computing their defining polynomials. 

\smallskip
There is a natural correspondence between the realization spaces of matroids and hyperplane arrangements. Each vector collection realizing a matroid $M$ corresponds to a hyperplane arrangement in the dual vector space with a matching combinatorial structure. Thus, the matroid realization space serves as a parameter space for hyperplane arrangements with fixed combinatorics.

\smallskip

%\subsection{Hyperplane arrangements}
Complex hyperplane arrangements have long been central in algebraic combinatorics and algebraic topology; see~e.g.~\cite{orlik1980combinatorics,rybnikov2011fundamental,bartolo2004invariants,jiang1994diffeomorphic,wang2005rigidity,nazir2009admissible,garber2003pi1}. Let $\mathcal{H} = \{H_{1}, \ldots, H_{d}\}$ be a hyperplane arrangement in $\CC^{n}$, and let $N(\mathcal{H})$ denote its complement manifold. The arrangement $\mathcal{H}$ determines a matroid, denoted $M_{\mathcal{H}}$, often called the combinatorics of $M_{\mathcal{H}}$. This combinatorial structure encodes important topological information; for instance, the cohomology algebra of $N(\mathcal{H})$ is determined by $M_{\mathcal{H}}$. However, not all geometric aspects are fully captured by combinatorics alone. In \cite{rybnikov2011fundamental}, Rybnikov provided the first example of two arrangements %$\mathcal{H}_{1}$ and $\mathcal{H}_{2}$ 
with identical matroids but non-isomorphic fundamental groups.

\smallskip

The realization space of an arrangement $\mathcal{H}$ consists of all hyperplane arrangements sharing the same matroid as $\mathcal{H}$. As shown by Mn\"ev in \cite{mnev1988universality}, the geometry of these realization spaces can exhibit arbitrary complexity. More precisely, Mn\"ev's Universality Theorem asserts that for each singularity type, there exists a realization space exhibiting that singularity; see e.g.~\cite{mnev1985manifolds, mnev1988universality, sturmfels1989matroid,lee2013mnev}.

\smallskip

On the other hand, the topological features of realization spaces have also been extensively studied. %, with standard textbooks such as. 
For example, it was shown in \cite{randell1989lattice} that hyperplane arrangements within the same connected component of the realization space have diffeomorphic complements. This property holds particularly when the arrangements belong to the same irreducible component of the realization space. This motivates the study of the irreducibility of these spaces. As noted in \cite{knutson2013positroid}, this task is generally challenging, which motivates the investigation of specific families of arrangements.
For instance, \cite{nazir2012connectivity} established the irreducibility of realization spaces of inductively connected line arrangements. This work was recently extended in \cite{guerville2023connectivity}, showing that in such cases, the moduli space of the line arrangement is isomorphic to a Zariski open subset of a complex space with a known dimension. Furthermore, an explicit parametrization of the moduli space was provided.

\smallskip
In this paper, we define the family of inductively connected hyperplane arrangements and extend the results of \cite{nazir2012connectivity,guerville2023connectivity} to  arrangements of arbitrary rank.  Moreover, we provide a  characterization of rigid arrangements within this family and prove that they have a connected moduli space.

\smallskip

We now summarize the main results of this paper. In Section~\ref{sec 3}, we define the notion of naive dimension of the realization space of matroids, originally introduced in \cite{guerville2023connectivity} for line arrangements and rank 3 matroids. A related notion, the expected dimension, was provided in \cite{ford2015expected} as a recursive formula %(see Definition~\ref{exp dimension}) 
for computing the dimension of the matroid variety, the Zariski closure of the realization space \(\Gamma_{M}\). In particular, it was shown in \cite{ford2015expected} that, for the well-studied class of positroids (see \cite{gelfand1987combinatorial,postnikov2006total,knutson2013positroid,ardila2016positroids,mohammadi2024combinatorics, mohammadi2022computing}), this formula matches the actual dimension. We examine these three notions of dimensions: naive, expected, and actual dimensions of realization spaces, investigating cases in which they coincide. Our approach introduces a new perspective on these dimensions compared to previous methods.

\smallskip

We begin by defining the naive dimension, and refer to Definition~\ref{exp dimension} for the expected dimension.

\vspace{-1mm}

\begin{definition}[Definition~\ref{naiv defi}]\normalfont
Let $M$ be a matroid of rank $n$ with $d$ elements. % in its ground set. 
Let $\mathcal{L}_M$ denote the set of subspaces of $M$ as in Definition~\ref{general}. The \textit{naive dimension} of the realization space $\Gamma_{M}$ of $M$ is: % defined as:
\[\dnaive(\Gamma_{M})=nd-\sum_{l\in \mathcal{L}_M}(\size{l}-\rank(l))(n-\rank(l)).
\]
%where $\mathcal{L}_M$ is given as in Definition~\ref{general}. 
\end{definition}
\vspace{-2mm}

We compute $\dnaive(\Gamma_{M})$ for various families of matroids and compare it with $\dim(\Gamma_{M})$ as follows. %, and we also contrast our formula with that presented in \cite{ford2015expected}.

\begin{theoremA}
Let $M$ be a matroid of rank $n$ on the ground set $[d]$. The following statements hold:
\begin{itemize}%[leftmargin=0pt]
\item%[{\bf (i)}] 
If $M$ is realizable, then
$\dnaive(\Gamma_{M})\leq \dim(\Gamma_{M})$. \hfill{\textup{(Theorem~\ref{bound on dimension})}}

\item%[{\bf (ii)}] 
If $M$ is an elementary split matroid of a hypergraph $\mathcal{H}=\{H_{1},\ldots,H_{q}\}$, then 
\[\dnaive(\Gamma_{M})=nd-\text{ec}(M)=nd-\sum_{i=1}^{q}(n-r_{i})(\size{H_{i}}-r_{i})\tag{Theorem~\ref{teo coincid}}.\]
\item%[{\bf (iii)}] 
If $M$ is elementary split and inductively connected, then 
\[\dnaive( \Gamma_{M})=\dim(\Gamma_{M})\tag{Corollary~\ref{gen 2}}.\]
\item%[{\bf (iv)}] 
If $M$ is elementary split, inductively connected, and rigid, then
\[\dnaive(\Gamma_{M})=\dim(\Gamma_{M})=n^{2}-1+d\tag{Proposition~\ref{propo rig}}.\]
\end{itemize}
\end{theoremA}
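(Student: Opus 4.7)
The plan is to combine the equality $\dnaive(\Gamma_M) = \dim(\Gamma_M)$ already established for elementary split and inductively connected matroids (Corollary~\ref{gen 2}) with an orbit–dimension count that uses the rigidity hypothesis to pin down the actual value. After invoking that corollary, the task reduces to proving $\dim(\Gamma_M) = n^2 - 1 + d$.

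I would work with the algebraic group $G = GL_n(\CC) \times (\CC^*)^d$, of dimension $n^2 + d$, acting on $(\CC^n)^d$ by
\[
(g, (\lambda_1, \ldots, \lambda_d)) \cdot (v_1, \ldots, v_d) \;=\; (\lambda_1 g v_1, \ldots, \lambda_d g v_d).
\]
Because this action preserves every linear dependency among the $v_i$'s, it restricts to an action on $\Gamma_M$. By the definition of rigidity, any two realizations of $M$ coincide up to a projective transformation and a per-vector rescaling, so $G$ acts transitively on $\Gamma_M$. Orbit–stabilizer then gives
\[
\dim(\Gamma_M) \;=\; n^2 + d - \dim \mathrm{Stab}_G(v)
\]
for any $v \in \Gamma_M$.

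The heart of the argument is showing that the stabilizer of every realization $v = (v_1, \ldots, v_d)$ is exactly the one-dimensional subgroup $\{(cI_n,(c^{-1},\ldots,c^{-1})) : c \in \CC^*\}$. A pair $(g, \lambda)$ stabilizes $v$ iff $g v_i = \lambda_i^{-1} v_i$ for every $i$, so each $v_i$ must be an eigenvector of $g$. Applying $g$ to any circuit dependency $\sum_k c_k v_{i_k} = 0$ forces the eigenvalues attached to the involved $v_{i_k}$'s to coincide; inductive connectivity then chains these circuits together so as to cover every element of the matroid, propagating a common eigenvalue $c$ to all $v_i$. Hence $g = cI_n$ and $\lambda_i = c^{-1}$ for every $i$, giving stabilizer dimension $1$, and so $\dim(\Gamma_M) = n^2 + d - 1 = n^2 - 1 + d$.

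The main obstacle is the propagation step: one must verify that the inductive definition of the class guarantees enough overlapping circuits to link every element of $M$ into a single ``eigenvalue–connected'' component. I expect this to follow directly from the recursive construction of inductively connected matroids, where each added element participates in a dependency with previously added elements, but the base case needs to be handled separately to rule out degenerate configurations in which no circuit exists to constrain $g$. Once that step is in place, the formula follows immediately from the orbit–stabilizer calculation above.
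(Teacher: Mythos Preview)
There is a genuine gap in the propagation step: inductive connectivity does not chain the circuits as you hope. The definition only bounds the number of subspaces through each newly added point \emph{from above}, and says nothing linking the initial $n$ basis elements to one another. Concretely, $M=U_{1,2}\oplus U_{1,2}$ (with $n=2$, $d=4$) is elementary split, inductively connected via the ordering $(1,3,2,4)$, and rigid (two distinct doubled points in $\PP^{1}$ are unique up to $\mathrm{PGL}_2$), yet the only circuits are $\{1,2\}$ and $\{3,4\}$. Your eigenvalue argument then yields two unrelated scalars, the stabilizer is $2$-dimensional, and indeed $\dim\Gamma_M=6\neq 7=n^{2}-1+d$. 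So the expectation that ``each added element participates in a dependency with previously added elements'' in a way that eventually links everything is simply false for this class.

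The paper's route is different and sidesteps this by quoting Remark~\ref{dim}, which asserts $\dim\mathcal{M}(M)=\dim\Gamma_M-(n^{2}-1+d)$ under the standing hypothesis that $M$ contains an $(n{+}1)$-circuit; rigidity then gives $\dim\mathcal{M}(M)=0$ and one finishes with Corollary~\ref{gen 2}. Your orbit--stabilizer computation is essentially a proof of that remark, and it goes through cleanly once you adopt the same hypothesis: an $(n{+}1)$-circuit contains a basis of $\CC^{n}$, so the common eigenvalue on that circuit forces $g=cI_n$ outright, after which every remaining $v_j\neq 0$ is automatically a $c$-eigenvector---no chaining needed. The missing ingredient is therefore the $(n{+}1)$-circuit assumption, not a finer analysis of the inductive construction.
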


In Definition~\ref{def simp}, we introduce the class of {\em inductively connected} matroids, previously studied in the context of line arrangements or, equivalently, rank 3 matroids in \cite{nazir2012connectivity}. We extend this property for arbitrary rank matroids and study their realization spaces and matroid varieties as follows.

\begin{theoremB}
Let $M$ be an inductively connected matroid. The following statements hold:
\begin{itemize}%[leftmargin=0pt]
\item%[{\bf (i)}] 
$\Gamma_{M}$ is isomorphic to a Zariski open subset of a complex space. \hfill % of known dimension. 
\textup{(Theorem~\ref{gen})}
\smallskip
\item%[{\bf (ii)}] 
$\Gamma_{M}$ is smooth and irreducible. \hfill
\textup{(Corollary~\ref{coro pa})}
\smallskip

\item%[{\bf (iii)}] 
If two hyperplane arrangements $\mathcal{H}_{1}$ and  $\mathcal{H}_{2}$ have $M$ as their matroids, then 
\[N(\mathcal{H}_{1})\cong N(\mathcal{H}_{2}),\]
where $N(\mathcal{H}_{1})$ and $N(\mathcal{H}_{2})$ denote the corresponding complement manifolds.\hfill \textup{(Corollary~\ref{coro pa})}
\end{itemize}
\end{theoremB}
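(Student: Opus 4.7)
The plan is to prove the three assertions in sequence, with the parametrization statement being the main content; smoothness, irreducibility, and the diffeomorphism of complement manifolds will follow as formal consequences.

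I would proceed by induction on the number $d$ of elements in the ground set, following the inductive structure implicit in Definition~\ref{def simp}. Fix an ordering $e_{1},\ldots,e_{d}$ of the ground set witnessing inductive connectivity, and let $M_{k}$ denote the restriction of $M$ to $\{e_{1},\ldots,e_{k}\}$. Assume inductively that $\Gamma_{M_{k-1}}$ is isomorphic to a Zariski open subset $U_{k-1} \subseteq \CC^{N_{k-1}}$. To extend a realization of $M_{k-1}$ to one of $M_{k}$, one must choose a vector $v_{k} \in \CC^{n}$ whose dependencies with $\{v_{1},\ldots,v_{k-1}\}$ match those prescribed by $M$. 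The forced dependencies amount to requiring $v_{k}$ to lie in a specific flat $F_{k}$ (the intersection of all flats spanned by previously placed vectors that must contain $e_{k}$), a linear condition of predictable codimension; the forbidden dependencies (coincidences with subsets of $\{e_{1},\ldots,e_{k-1}\}$ not predicted by the matroid) cut out a proper Zariski closed subset of $F_{k}$. Thus the fibre over each point of $U_{k-1}$ is a Zariski open subset of a linear space of fixed dimension, so $\Gamma_{M_{k}}$ is itself a Zariski open subset of $\CC^{N_{k}}$ for some $N_{k}$. Setting $k = d$ yields the first claim.

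For smoothness and irreducibility of $\Gamma_{M}$, any nonempty Zariski open subset of $\CC^{N}$ is smooth and irreducible, so the second bullet is immediate. For the diffeomorphism of complement manifolds, I would invoke Randell's theorem from \cite{randell1989lattice}: any two hyperplane arrangements lying in the same connected component of a realization space have diffeomorphic complements. Since $\Gamma_{M}$ is irreducible and smooth, it is connected in the Zariski (and hence classical) topology, so any two realizations $\mathcal{H}_{1},\mathcal{H}_{2}$ lie in the same component, giving $N(\mathcal{H}_{1}) \cong N(\mathcal{H}_{2})$.

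The main obstacle is the inductive step itself: one must verify that the combinatorial constraints on $v_{k}$ imposed by $M$ genuinely reduce to a single linear subspace condition of the expected codimension, rather than producing a higher-degree subvariety or imposing extra algebraic relations among the coordinates of the previously placed vectors. This is precisely where the inductive connectivity hypothesis is essential --- it should guarantee that the flats forcing $v_{k}$ to be constrained are already spanned by earlier vectors in a combinatorially transparent way. Carefully tracking the dimension count through the induction, and verifying that the forbidden locus is a \emph{proper} closed subset (so that $U_{k}$ is nonempty, and the realization space is not vacuous), is where the bulk of the technical work would lie.
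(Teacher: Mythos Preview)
Your overall strategy matches the paper's: induct along the ordering that witnesses inductive connectivity, and derive smoothness, irreducibility, and the diffeomorphism of complements as formal consequences (the latter via Randell's theorem, exactly as the paper does in Corollary~\ref{coro pa}). The logical skeleton is correct, but there is a genuine gap in the inductive step.

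You write that ``the fibre over each point of $U_{k-1}$ is a Zariski open subset of a linear space of fixed dimension, so $\Gamma_{M_{k}}$ is itself a Zariski open subset of $\CC^{N_{k}}$.'' The word ``so'' is not justified. Knowing that each fibre is open in a linear space of constant dimension does not imply that the total space is open in affine space: you must show that the family of linear spaces forms a \emph{trivial} subbundle of $U_{k-1}\times\CC^{n}$, i.e., that it admits a basis varying polynomially in the base point, and that the resulting isomorphism and its inverse are both polynomial. This is precisely the content of the paper's Proposition~\ref{propo a1}, which is the technical heart of the argument. For a degree-one point the subspace $\gamma_{l}$ has the obvious polynomial frame given by the columns indexed by a fixed basis of $l$. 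For a degree-two point the fibre is $\gamma_{l_{1}}\cap\gamma_{l_{2}}$, and producing a polynomial frame for an intersection of moving subspaces is not a formality; the paper carries this out via a Grassmann--Cayley computation (Lemma~\ref{basis}) after a careful choice of auxiliary bases of $l_{1}\cup l_{2}$. The inductive connectivity hypothesis $\lvert(\mathcal{L}_{w}[i])_{p_{i}}\rvert\leq 2$ enters exactly here: with two subspaces the intersection dimension $\rank(l_{1})+\rank(l_{2})-\rank(l_{1}\cup l_{2})$ is constant over $\Gamma_{N}$ and such a frame can be written down; with three or more, the intersection dimension can jump along a proper subvariety and no global trivialization need exist. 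Your final paragraph acknowledges that this step is where the work lies, but frames the difficulty as verifying that the constraint is ``a single linear subspace condition'' --- which is the easy part --- rather than the algebraic triviality of the resulting family.

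A secondary point: the paper does not prove $\Gamma_{M}\neq\emptyset$ in general. Corollary~\ref{coro pa} explicitly adds the hypothesis ``realizable,'' so your concern about the forbidden locus being proper is simply assumed away rather than established.
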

Additionally, in Procedure~\ref{procedure}, we provide an explicit construction of the defining polynomials of the aforementioned Zariski open subset of $\Gamma_M$ and provide detailed examples in \S\ref{subsec:ex}.

Finally, we apply our results to investigate the rigidity of matroids and the connectivity of the moduli space of their corresponding hyperplane arrangements. The concept of rigidity, which pertains to matroids with a unique realization up to projective transformations, is central to the study of the connectivity of moduli spaces of hyperplane arrangements (see, for instance, \cite{guerville2023connectivity}). Furthermore, projective uniqueness has been explored in various other contexts (see \cite{brandt2019slack, ziegler1990matroid, miller2003unique, gouveia2020projectively, wenzel1991projective}). In \S\ref{subsec:rigid}, we utilize our main results to provide a complete characterization of the rigid matroids that are both inductively connected and elementary split. More precisely, we prove the following theorem.

\begin{theoremC}
Let $M$ be a matroid of rank $n$ on the ground set $[d]$ that is both inductively connected and elementary split. Then the following statements hold:
\begin{itemize}
\item%[{\bf (i)}] 
$M$ is rigid if and only if
$\dnaive(\Gamma_{M})=n^{2}-1+d.$ \hfill{\textup{(Proposition~\ref{propo rig})}}
\item%[{\bf (ii)}] 
If $M$ is realizable, then $M$ is inductively rigid if and only if there exists an ordering $(p_{1},\ldots,p_{d})$ of its elements satisfying:
\begin{itemize}
\item $\{p_{1},\ldots,p_{n+1}\}$ forms a circuit.%\hfill{\textup{(Proposition~\ref{prop loc})}}

\item $\tau_{i}=1$ for all $i\geq n+2$, where $\tau_{i}$ is given as in Definition~\ref{defi f}.\hfill{\textup{(Proposition~\ref{prop loc})}}

\end{itemize}
%where $\tau_{i}$ is given as in Definition~\ref{defi f}.
\end{itemize}
\end{theoremC}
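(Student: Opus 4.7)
The plan is to reduce both bullets of Theorem~C to dimension counts performed along the inductive parametrization of $\Gamma_{M}$ provided by Theorem~\ref{gen} and Procedure~\ref{procedure}. For the first bullet, I would invoke Corollary~\ref{gen 2}, under which $\dnaive(\Gamma_{M}) = \dim(\Gamma_{M})$, reducing the equivalence to the statement ``$M$ is rigid if and only if $\dim(\Gamma_{M}) = n^{2}-1+d$''. The forward implication is Proposition~\ref{propo rig} (the fourth bullet of Theorem~A). For the converse, I would use the natural action on $\Gamma_{M}$ of the group $G = PGL_{n} \times (\CC^{*})^{d}$, whose dimension equals $n^{2}-1+d$, via simultaneous projective transformation together with independent rescaling of the $d$ vectors. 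Since $\Gamma_{M}$ is smooth and irreducible (Corollary~\ref{coro pa}) and the $G$-action has trivial generic stabilizer, the dimension coincidence forces a single dense open orbit, and the explicit description of $\Gamma_{M}$ as a Zariski open subset of a complex affine space (Theorem~\ref{gen}) upgrades this to the conclusion that $\Gamma_{M}$ \emph{is} a single $G$-orbit. Hence any two realizations of $M$ differ by a projective transformation and a rescaling, which is precisely rigidity.

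For the second bullet, the strategy is to follow Procedure~\ref{procedure} step by step, counting the dimension contributed by each element $p_{i}$. The first $n$ elements form a basis, contributing $n^{2}$ parameters. The element $p_{n+1}$ contributes a number bounded above by $n$, with equality if and only if $\{p_{1},\dots,p_{n+1}\}$ is a circuit, since in that case $p_{n+1}$ ranges over all of $\CC^{n}$ subject only to every $n$-subset remaining a basis. For $i \geq n+2$ the contribution is $\tau_{i}$ from Definition~\ref{defi f}, and therefore
\[
\dim(\Gamma_{M}) \;=\; n^{2} + n + \sum_{i=n+2}^{d}\tau_{i}.
\]
Because each $\tau_{i}\geq 1$ (every $p_{i}$ retains at least its scalar freedom), this sum is minimised, and equals $n^{2}-1+d$, exactly when the circuit condition holds for $\{p_{1},\dots,p_{n+1}\}$ and $\tau_{i}=1$ for every $i\geq n+2$. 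By the first bullet, this minimum value characterises rigidity. Moreover, the equality $\tau_{i}=1$ expresses that at step $i$ of Procedure~\ref{procedure} the element $p_{i}$ is cut out up to a scalar by the flats through the previously placed elements, which is exactly the step-by-step (inductive) certificate of rigidity.

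The main obstacle I anticipate is promoting the ``dense open orbit'' in the first bullet to a \emph{single} orbit, which is where the identification of $\Gamma_{M}$ as an open subset of an affine complex space, rather than merely an irreducible variety, becomes indispensable: without it one could not exclude strictly smaller $G$-orbits cohabiting with the open one and providing inequivalent realizations. A secondary technicality is ensuring that, whenever $M$ is inductively rigid, an inductive ordering whose first $n+1$ elements form a circuit can actually be constructed; this should come from a reordering argument exploiting the fact that inductive rigidity rules out excessive freedom in the initial elements, but matching the abstract notion of inductive rigidity with the existence of such a specific ordering will require careful verification that each step of the inductive construction is compatible with having the circuit as the starting block.
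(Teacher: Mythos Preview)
Your plan for the first bullet is more complicated than necessary, and the crucial step is misattributed. Once Corollary~\ref{gen 2} gives $\dnaive(\Gamma_M)=\dim(\Gamma_M)$, Remark~\ref{dim} yields $\dim(\mathcal{M}(M))=0$, so $\mathcal{M}(M)$ is finite; since $\Gamma_M$ is irreducible (Corollary~\ref{coro pa}), $\mathcal{M}(M)$ is connected, hence a single point. That is the entire argument in the paper. Your orbit approach can be made to work, but the reason the dense orbit is the whole space is \emph{not} that $\Gamma_M$ is open in affine space: it is that every stabilizer is trivial (this is what Remark~\ref{dim} encodes, and it needs the $(n{+}1)$-circuit, not just generic triviality), so every orbit is open, and connectedness of $\Gamma_M$ then forces a single orbit. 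As stated, your ``upgrade'' step is a non sequitur.

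For the second bullet there is a genuine gap. In the forward direction you establish only that $M$ itself is rigid; you must also check that each $M_{w}[i]$ is rigid, which follows by applying the same dimension count to the truncations. More seriously, in the backward direction you say a reordering argument ``will require careful verification,'' but no reordering is needed at all: Definition~\ref{inductive} of \emph{inductively rigid} already demands an ordering $w$ with $\{p_1,\dots,p_{n+1}\}$ a circuit and every $M_w[i]$ rigid. Starting from that ordering, each $M_w[i]$ is elementary split (deletion preserves this class) and inductively connected, so the first bullet applied to $M_w[i]$ gives $\dnaive(\Gamma_{M_w[i]})=n^2+i-1$; then Proposition~\ref{igual 3} yields $\sum_{j\le i}\tau_j=n^2+i-1$, and subtracting consecutive values gives $\tau_i=1$ for $i\ge n+2$. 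Finally, note that the per-step contribution in Theorem~\ref{gen} is $\widetilde{\tau}_i$, not $\tau_i$; you should invoke Lemma~\ref{split is simple} (or Lemma~\ref{geno 2}) to identify them in the elementary split case.
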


\vspace{-2mm}

\medskip\noindent{\bf Outline.}
Section~\ref{sec 2} presents key concepts, such as matroid realization spaces and elementary split matroids. In Section~\ref{sec 3}, we introduce the notion of {\em naive dimension} of the realization space, proving it serves as a lower bound for the actual dimension and coincides with the expected dimension for the class of elementary split matroids.
Section~\ref{sec 4} introduces the family of {\em inductively connected} matroids, establishing the irreducibility and smoothness of their realization spaces, along with an explicit formula for their dimension. Additionally, we provide a characterization of rigid matroids of this type.

%\vspace{-2mm}

\section{Preliminaries}\label{sec 2}

\begin{notation}\label{notacion inicial}
We establish the following notation, which will be used throughout this note: 
%In this work, we will only work over the field $\CC$. 
\begin{itemize}
\item We denote the set $\{1,\ldots,n\}$ by $[n]$, and the collection of all subsets of $[d]$ of size $n$ by $\textstyle \binom{[d]}{n}$. 
\item For an $n\times d$ matrix $X=\pare{x_{i,j}}$ of indeterminates, let $\CC[X]$ denote the polynomial ring in the variables $x_{ij}$. For subsets $A\subset[n]$ and $B\subset[d]$ with $\size{A}=\size{B}$, we write  $[A|B]_{X}$ for the minor of $X$ with rows indexed by $A$ and columns indexed by $B$. 
\item The symbol $M$ will always refer to a rank $n$ matroid on a ground set of size $d$. 
\end{itemize}
\end{notation}

%\vspace{-1cm}

\subsection{Matroids}

We begin by reviewing key notions related to matroids; see \cite{Oxley} for further details.

\begin{definition}\normalfont
A matroid $M$ consists of a ground set $[d]$ together with a collection $\mathcal{C}$ of subsets of $[d]$, called
circuits, that satisfy the following three axioms: 
\begin{itemize}
\item $\emptyset \not\in\mathcal{C}$.
\item If $C_1,C_2 \in \mathcal{C}$ and $C_1\subseteq C_2$, then $C_1=C_2$.
\item  %Circuit elimination:
 If $e\in C_1\cap C_2$ with $C_{1}\neq C_{2}\in \mathcal{C}$, there exists $C_3\in\mathcal{C}$ such that $C_3\subseteq (C_1\cup C_2)\backslash e$. 
\end{itemize}
The set of circuits of $M$ is denoted by $\mathcal{C}(M)$. Additionally, we define the following notions: 
\begin{itemize}
\item A subset of $[d]$ that contains a circuit is called {\em dependent}, while any subset that does not contain a circuit is {\em independent}. The collection of all dependent sets of $M$ is denoted by $\mathcal{D}(M)$. 
\item A {\em basis} is a maximal independent subset of $[d]$, with respect to inclusion. The set of all bases is denoted by $\mathcal{B}(M)$.
\item For any $F\subset [d]$, the {\em rank} of $F$, denoted $\rank_M(F)$ or simply $\rank(F)$, is the size of the largest independent set contained in $F$. We define $\rank(M)$  as $\rank([d])$, which is the size of any basis of $M$. 
\item For a subset $F\subset [d]$, we say that $x\in [d]$ belongs to the {\em closure} of $F$, denoted $x\in \closure{F}$, if $\rank(F\cup \{x\})=\rank(F)$. A set is called a {\em flat} if it is equal to its own closure.
\end{itemize}
%\end{definition}
\end{definition}

We now recall the matroid operations of deletion and contraction.

\begin{definition}%[Submatroid]
\normalfont \label{subm}
\begin{itemize}
\item For any subset $S \subseteq [d]$, the {\em restriction} of $M$ to $S$ is the matroid 
 on $S$ whose rank function is the restriction of the rank function of $M$ to $S$.
Unless otherwise specified, we assume that subsets of $[d]$ possess this structure and refer to them as \textit{submatroids} of $M$. This submatroid is denoted by $M|S$, or simply $S$ when the context is clear. The {\em deletion} of $S$, is denoted by $M\backslash S$, which corresponds to
$M|([d]\backslash S)$.
\item If $S$ is an independent subset of $M$, the {\em contraction} of $M$ by $S$ is the matroid $M/S$ on $[d]\backslash S$ where the rank of any subset $T$ is given by $\rank_{M}(T\cup S)-\rank_{M}(S)$.
\end{itemize}
\end{definition}

We now introduce the concept of {\em subspaces} within a matroid.

\begin{definition}\normalfont\label{general}
Let $M$ be a matroid of rank $n$, with elements, called points, denoted by $\mathcal{P}_{M}$. We define an equivalence relation on the circuits of $M$ of size less than $n+1$: 
\begin{equation}\label{equiv}C_{1}\sim C_{2} \Longleftrightarrow \closure{C_{1}}=\closure{C_{2}}.\end{equation}
We fix the following notation:
\begin{itemize}
\item An equivalence class $l$ is referred to as a {\em subspace} of $M$. We say that $\rank(l)=k$ if $\rank(C)=k$ for any circuit $C\in l$. We denote by $\mathcal{L}_{M}$ the set of all subspaces of the matroid $M$.
\item A point $p\in \mathcal{P}_{M}$ is said to belong to the subspace $l$ if there exists a circuit $C\in l$ with $p\in C$. For each $p\in \mathcal{P}_{M}$, let $\mathcal{L}_{p}$ denote the set of all the subspaces of $M$ containing $p$. 
The {\em degree} of $p$, denoted by deg$(p)$ is defined as $\size{\mathcal{L}_{p}}$. 
\item 
Let $\gamma$ be a collection of vectors of $\CC^{n}$ indexed by $\mathcal{P}_{M}$. We denote as $\gamma_{p}$ the vector of $\gamma$ corresponding to $p$. For a subspace $l\in \mathcal{L}_{M}$, we denote by $\gamma_{l}$  the subspace $\is{\gamma_{p}:p\in l}\subset \CC^{n}$. 
\end{itemize}
For simplicity, when the context is clear, we drop the index $M$ and 
write $\mathcal{L}_M$ and $\mathcal{P}$ instead. 
\end{definition}

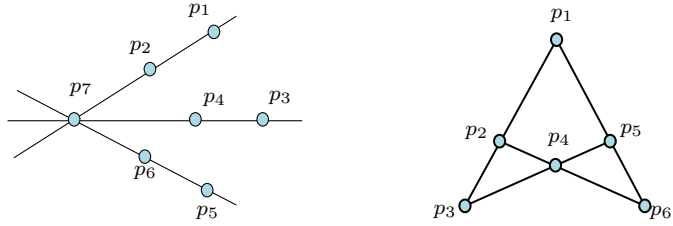
\begin{figure}[h]
    \centering
    \begin{subfigure}[b]{0.3\textwidth}
        \centering
        \begin{tikzpicture}[x=0.75pt,y=0.75pt,yscale=-1,xscale=1]

\tikzset{every picture/.style={line width=0.75pt}} %set default line width to 0.75pt        

%Straight Lines [id:da06101063982657062] 
\draw    (81.69,116.61) -- (191.16,174.3) ;
%Straight Lines [id:da7678393526214486] 
\draw    (77,131.88) -- (224,131.88) ;
%Straight Lines [id:da3419905968139849] 
\draw    (80.13,150.55) -- (191.16,79.27) ;
%Shape: Ellipse [id:dp43020942362366354] 
\draw [fill={rgb, 255:red, 173; green, 216; blue, 230}, fill opacity=1]
(107.34,131.2) .. controls (107.34,133.07) and (108.63,134.58) .. (110.23,134.58) .. controls (111.83,134.58) and (113.12,133.07) .. (113.12,131.2) .. controls (113.12,129.33) and (111.83,127.81) .. (110.23,127.81) .. controls (108.63,127.81) and (107.34,129.33) .. (107.34,131.2) -- cycle ;
%Shape: Ellipse [id:dp7909486902309077] 
\draw [fill={rgb, 255:red, 173; green, 216; blue, 230}, fill opacity=1]
(142.62,149.93) .. controls (142.62,151.8) and (143.91,153.32) .. (145.51,153.32) .. controls (147.11,153.32) and (148.4,151.8) .. (148.4,149.93) .. controls (148.4,148.06) and (147.11,146.55) .. (145.51,146.55) .. controls (143.91,146.55) and (142.62,148.06) .. (142.62,149.93) -- cycle ;
%Shape: Ellipse [id:dp6735689153424713] 
\draw [fill={rgb, 255:red, 173; green, 216; blue, 230}, fill opacity=1]
(173.7,166.79) .. controls (173.7,168.66) and (174.99,170.18) .. (176.59,170.18) .. controls (178.19,170.18) and (179.48,168.66) .. (179.48,166.79) .. controls (179.48,164.92) and (178.19,163.41) .. (176.59,163.41) .. controls (174.99,163.41) and (173.7,164.92) .. (173.7,166.79) -- cycle ;
%Shape: Ellipse [id:dp9808673721534766] 
\draw [fill={rgb, 255:red, 173; green, 216; blue, 230}, fill opacity=1] (201.42,131.2) .. controls (201.42,133.07) and (202.71,134.58) .. (204.31,134.58) .. controls (205.91,134.58) and (207.2,133.07) .. (207.2,131.2) .. controls (207.2,129.33) and (205.91,127.81) .. (204.31,127.81) .. controls (202.71,127.81) and (201.42,129.33) .. (201.42,131.2) -- cycle ;
%Shape: Ellipse [id:dp8560955724937005] 
\draw [fill={rgb, 255:red, 173; green, 216; blue, 230}, fill opacity=1] (167.82,131.2) .. controls (167.82,133.07) and (169.11,134.58) .. (170.71,134.58) .. controls (172.31,134.58) and (173.6,133.07) .. (173.6,131.2) .. controls (173.6,129.33) and (172.31,127.81) .. (170.71,127.81) .. controls (169.11,127.81) and (167.82,129.33) .. (167.82,131.2) -- cycle ;
%Shape: Ellipse [id:dp8522323991300782] 
\draw [fill={rgb, 255:red, 173; green, 216; blue, 230}, fill opacity=1] (177.06,87.18) .. controls (177.06,89.05) and (178.35,90.56) .. (179.95,90.56) .. controls (181.55,90.56) and (182.84,89.05) .. (182.84,87.18) .. controls (182.84,85.31) and (181.55,83.79) .. (179.95,83.79) .. controls (178.35,83.79) and (177.06,85.31) .. (177.06,87.18) -- cycle ;
%Shape: Ellipse [id:dp38553240573402947] 
\draw [fill={rgb, 255:red, 173; green, 216; blue, 230}, fill opacity=1] (145.14,105.91) .. controls (145.14,107.78) and (146.43,109.3) .. (148.03,109.3) .. controls (149.63,109.3) and (150.92,107.78) .. (150.92,105.91) .. controls (150.92,104.04) and (149.63,102.52) .. (148.03,102.52) .. controls (146.43,102.52) and (145.14,104.04) .. (145.14,105.91) -- cycle ;

% Text Node
\draw (166.25,71.57) node [anchor=north west][inner sep=0.75pt]   [align=left] {{\scriptsize $p_1$}};
% Text Node
\draw (206.28,115.51) node [anchor=north west][inner sep=0.75pt]   [align=left] {{\scriptsize $p_{3}$}};
% Text Node
\draw (138.91,153.35) node [anchor=north west][inner sep=0.75pt]   [align=left] {{\scriptsize $p_6$}};
% Text Node
\draw (170.08,173.62) node [anchor=north west][inner sep=0.75pt]   [align=left] {{\scriptsize $p_5$}};
% Text Node
\draw (173.44,116.45) node [anchor=north west][inner sep=0.75pt]   [align=left] {{\scriptsize $p_4$}};
% Text Node
\draw (106.8,111.36) node [anchor=north west][inner sep=0.75pt]   [align=left] {{\scriptsize $p_7$}};
% Text Node
\draw (136.66,90.59) node [anchor=north west][inner sep=0.75pt]   [align=left] {{\scriptsize $p_2$}};
% Text Node

   \end{tikzpicture}
        %\caption{Quadrilateral set}
        \label{fig:quadrilateral 2}
    \end{subfigure}
    %\hfill
    \begin{subfigure}[b]{0.3\textwidth}
        \centering

\tikzset{every picture/.style={line width=0.75pt}} %set default line width to 0.75pt        

\begin{tikzpicture}[x=0.75pt,y=0.75pt,yscale=-1,xscale=1]

%Straight Lines [id:da9613256122302601] 
\draw [line width=0.75]    (247.65,123.92+60) -- (201.96,207.53+60) ;
%Straight Lines [id:da3259117734461473] 
\draw [line width=0.75]    (247.65,123.92+60) -- (291.68,207.53+60) ;
%Straight Lines [id:da2624677482334885] 
\draw [line width=0.75]    (219.23,174.98+60) -- (291.68,207.53+60) ;
%Straight Lines [id:da26466266747028633] 
\draw [line width=0.75]    (274.41,174.98+60) -- (201.96,207.53+60) ;
%Shape: Ellipse [id:dp10345993152561672] 
\draw [fill={rgb, 255:red, 173; green, 216; blue, 230}, fill opacity=1] (244.87,123.92+60) .. controls (244.87,125.68+60) and (246.12,127.11+60) .. (247.65,127.11+60) .. controls (249.19,127.11+60) and (250.44,125.68+60) .. (250.44,123.92+60) .. controls (250.44,122.15+60) and (249.19,120.73+60) .. (247.65,120.73+60) .. controls (246.12,120.73+60) and (244.87,122.15+60) .. (244.87,123.92+60) -- cycle ;
%Shape: Ellipse [id:dp21945184574290333] 
\draw [fill={rgb, 255:red, 173; green, 216; blue, 230}, fill opacity=1] (244.31,187.1+60) .. controls (244.31,188.87+60) and (245.56,190.29+60) .. (247.1,190.29+60) .. controls (248.64,190.29+60) and (249.88,188.87+60) .. (249.88,187.1+60) .. controls (249.88,185.34+60) and (248.64,183.91+60) .. (247.1,183.91+60) .. controls (245.56,183.91+60) and (244.31,185.34+60) .. (244.31,187.1+60) -- cycle ;
%Shape: Ellipse [id:dp033802625652762264] 
\draw [fill={rgb, 255:red, 173; green, 216; blue, 230}, fill opacity=1] (216.45,174.98+60) .. controls (216.45,176.74+60) and (217.69,178.17+60) .. (219.23,178.17+60) .. controls (220.77,178.17+60) and (222.02,176.74+60) .. (222.02,174.98+60) .. controls (222.02,173.21+60) and (220.77,171.79+60) .. (219.23,171.79+60) .. controls (217.69,171.79+60) and (216.45,173.21+60) .. (216.45,174.98+60) -- cycle ;
%Shape: Ellipse [id:dp395767307047196] 
\draw [fill={rgb, 255:red, 173; green, 216; blue, 230}, fill opacity=1] (271.62,174.98+60) .. controls (271.62,176.74+60) and (272.87,178.17+60) .. (274.41,178.17+60) .. controls (275.94,178.17+60) and (277.19,176.74+60) .. (277.19,174.98+60) .. controls (277.19,173.21+60) and (275.94,171.79+60) .. (274.41,171.79+60) .. controls (272.87,171.79+60) and (271.62,173.21+60) .. (271.62,174.98+60) -- cycle ;
%Shape: Ellipse [id:dp5069700831588411] 
\draw [fill={rgb, 255:red, 173; green, 216; blue, 230}, fill opacity=1] (288.9,207.53+60) .. controls (288.9,209.29+60) and (290.14,210.72+60) .. (291.68,210.72+60) .. controls (293.22,210.72+60) and (294.47,209.29+60) .. (294.47,207.53+60) .. controls (294.47,205.77+60) and (293.22,204.34+60) .. (291.68,204.34+60) .. controls (290.14,204.34+60) and (288.9,205.77+60) .. (288.9,207.53+60) -- cycle ;
%Shape: Ellipse [id:dp3796249357557302] 
\draw [fill={rgb, 255:red, 173; green, 216; blue, 230}, fill opacity=1] (199.17,207.53+60) .. controls (199.17,209.29+60) and (200.42,210.72+60) .. (201.96,210.72+60) .. controls (203.49,210.72+60) and (204.74,209.29+60) .. (204.74,207.53+60) .. controls (204.74,205.77+60) and (203.49,204.34+60) .. (201.96,204.34+60) .. controls (200.42,204.34+60) and (199.17,205.77+60) .. (199.17,207.53+60) -- cycle ;

% Text Node
\draw (243.29,107.07+60) node [anchor=north west][inner sep=0.75pt]   [align=left] {{\scriptsize $p_1$}};
% Text Node
\draw (200.77,165.67+60) node [anchor=north west][inner sep=0.75pt]   [align=left] {{\scriptsize $p_2$}};
% Text Node
\draw (184.94,205.77+60) node [anchor=north west][inner sep=0.75pt]   [align=left] {{\scriptsize $p_3$}};
% Text Node
\draw (241.88,169.35+60) node [anchor=north west][inner sep=0.75pt]   [align=left] {{\scriptsize $p_4$}};
% Text Node
\draw (277.98,164.77+60) node [anchor=north west][inner sep=0.75pt]   [align=left] {{\scriptsize $p_5$}};
% Text Node
\draw (292.84,206.21+60) node [anchor=north west][inner sep=0.75pt]   [align=left] {{\scriptsize $p_6$}};
% Text Node
%\draw (203.1,213.83) node [anchor=north west][inner sep=0.75pt]   [align=left] {{\footnotesize Quadrilateral set}};

\end{tikzpicture}
       % \caption{Pascal configuration}
        \label{fig:pascal}
    \end{subfigure}
\caption{(Left) Three concurrent lines; (Right) Quadrilateral set.}
    \label{fig:combined}
\end{figure}  

\begin{example}
Consider the quadrilateral set configuration $\text{QS}$ depicted in Figure~\ref{fig:combined} (Right), which represents a rank 3 matroid with points $\mathcal{P}_{\text{QS}}=\{p_{1},\ldots,p_{6}\}$, and the %following set of 
circuits (of size at most three): 
\[\mathcal{C}=\{\{p_1,p_2,p_3\},\{p_1,p_5,p_{6}\},\{p_3,p_4,p_5\},\{p_2,p_4,p_6\}\}.\]
The set of subspaces of $\text{QS}$ is equal to $\mathcal{C}$, and all points have degree two, i.e., $\deg(p_i)=2$ for all $i$. 
\end{example}

\subsection{Elementary split matroids}\label{sec:split}

%In this subsection, we introduce the family of 
The elementary split matroids were introduced in \cite{berczi2023hypergraph} through a hypergraph characterization and form a proper subclass of split matroids, which were originally defined in \cite{joswig2017matroids} using polyhedral geometry. 

\medskip
We recall the following characterization of elementary split matroids from %from \cite{berczi2023hypergraph}; see 
\cite[Theorem~6]{berczi2023hypergraph}.%, including all connected split matroids. 

\begin{definition}%\textup{\cite[Theorem~6]{berczi2023hypergraph}}
\normalfont 
\label{elementary}
Let $d\geq n$ be positive integers, and let $\mathcal{H}=\{H_{1},\ldots,H_{q}\}$ be a collection of subsets of $[d]$. Let $r_{1},\ldots,r_{q}$ be positive integers satisfying the following conditions for all $i,j\in [q]$: 
\[\size{H_{i}\cap H_{j}}\leq r_{i}+r_{j}-n,\quad % for all $i,j\in [q]$.
  \size{[d]\backslash H_{i}}\geq n-r_{i},\quad % for all $i\in [q]$.
  r_{i}\leq n-1, \quad % for all $i\in [q]$.
  \size{H_{i}}\geq r_{i}+1.
\]
Then, the collection \[\mathcal{I}=\{X\subset [d]:\size{X}\leq n,\size{X\cap H_{i}}\leq r_{i}\ \text{for all}\ i\in [q]\}\] forms the independent sets of a matroid $M$ of rank $n$ on $[d]$ with rank function: 
\[\rank_{M}(F)=\text{min}\{n,\size{F},\text{min}_{i\leq q}\{\size{F-H_{i}}+r_{i}\}\}.\]
The matroid constructed in this manner is referred to as the {\em elementary split matroid} of $\mathcal{H}$.
\end{definition}

A notable subfamily of elementary split matroids is the family of paving matroids. It is conjectured in \cite{mayhew2011asymptotic} that asymptotically almost all matroids are paving; see also \cite{lowrance2013properties, mayhew2011asymptotic, oxley1991ternary, brualdi1972foundations, Oxley, welsh2010matroid}.

\begin{definition}[Paving matroid]\normalfont \label{pav}
A matroid $M$ of rank $n$ is called a {\em paving matroid} if every circuit of $M$ has a size of either $n$ or $n+1$. In this context, we refer to it as an $n$-paving matroid. Equivalently, any $n$-paving matroid can be derived from Definition~\ref{elementary} with the condition that every $r_{i}$ is equal to $n-1$.  For paving matroids, the set of subspaces $\mathcal{L}_{M}$, as described in Definition~\ref{general}, coincides with the set of {\em dependent hyperplanes} of $M$,~i.e,~maximal subsets of points of size at least $n$, where every subset of $n$ elements forms a circuit. These subspaces are referred to as dependent hyperplanes, and in the special case where $n=3$, we simply call them {\em lines}. 
\end{definition}

\begin{example}
The matroid illustrated in Figure~\ref{fig:combined} (Left) represents a paving matroid of rank $3$ with points $\{p_{1},\ldots,p_{7}\}$ and lines given by $\mathcal{L}=\{\{p_{1},p_{2},p_{7}\}, \{p_{3},p_{4},p_{7}\}, \{p_{5},p_{6},p_{7}\}\}$. 
\end{example}

We now describe the set of subspaces of an elementary split matroid.

\begin{lemma}\label{subspaces split}
Let $M$ be the elementary split matroid of %rank $n$ represented by a collection 
$\mathcal{H}=\{H_{1},\ldots,H_{q}\}$. Then, we have $\mathcal{L}_M=\mathcal{H}$. %the set of subspaces of $M$ coincides with
%$\{H_{1},\ldots,H_{q}\}$. %Moreover, each  $l\in \mathcal{L}_M$ is a flat.
\end{lemma}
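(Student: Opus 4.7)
The plan is to prove both inclusions $\mathcal{H} \subseteq \mathcal{L}_M$ and $\mathcal{L}_M \subseteq \mathcal{H}$ by applying the explicit rank formula of Definition~\ref{elementary} to carefully chosen subsets. The combinatorial conditions $\size{H_i\cap H_j}\leq r_i+r_j-n$ and $\size{H_i}\geq r_i+1$ are precisely what force each $H_i$ to behave like a single flat carrying circuits, and both will be used throughout.

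For $\mathcal{H}\subseteq \mathcal{L}_M$, I would fix $i$ and show that every $(r_i+1)$-subset $C\subseteq H_i$ is a circuit of $M$ whose closure is exactly $H_i$. The $j=i$ term in the rank formula gives $\rank(C)\leq r_i<|C|$, so $C$ is dependent. For any proper subset $F'\subsetneq C$ with $|F'|=r_i$, the intersection bound yields $|F'-H_j|\geq r_i-(r_i+r_j-n)=n-r_j$, so the $j\neq i$ terms are each at least $n$, while the $j=i$ term equals $r_i$; thus $\rank(F')=r_i$ and $F'$ is independent, making $C$ a circuit. An analogous computation shows $\rank(C\cup\{p\})=r_i$ for $p\in H_i$ and $\rank(C\cup\{p\})=r_i+1$ for $p\notin H_i$, so $\closure{C}=H_i$. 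Consequently, all $(r_i+1)$-subsets of $H_i$ lie in a single equivalence class under~\eqref{equiv}, and the union of their points equals $H_i$.

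For the reverse inclusion, take any circuit $C$ with $|C|\leq n$. Since $\rank(C)=|C|-1<|C|\leq n$, the minimum in the rank formula is attained at some index $j^{*}$, giving $\rank(C)=|C-H_{j^{*}}|+r_{j^{*}}$. To see $C\subseteq H_{j^{*}}$, suppose toward a contradiction that $y\in C-H_{j^{*}}$. Then $C-\{y\}$ is independent of size $|C|-1=\rank(C)$, but the rank formula also forces $\rank(C-\{y\})\leq (|C-H_{j^{*}}|-1)+r_{j^{*}}=\rank(C)-1$, a contradiction. Hence $C\subseteq H_{j^{*}}$, which implies $|C|=r_{j^{*}}+1$, and by the first part $\closure{C}=H_{j^{*}}$. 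Thus the equivalence class of $C$ is exactly the one associated with $H_{j^{*}}$, giving a bijection $\mathcal{L}_M\leftrightarrow \mathcal{H}$.

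The main obstacle I anticipate is the closure computation when $p\notin H_i$: verifying that adjoining such a $p$ strictly increases rank requires showing $|(C\cup\{p\})-H_j|+r_j\geq r_i+1$ for every $j\neq i$, which again rests on the intersection bound together with $r_i\leq n-1$. Once this verification is in place, the identification of equivalence classes of small circuits with the hyperedges of $\mathcal{H}$ follows directly from the two reductions above.
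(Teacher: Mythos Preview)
Your proof is correct and follows essentially the same strategy as the paper: both arguments identify the circuits of size at most $n$ as precisely the $(r_i+1)$-subsets of the $H_i$, and then show that the closure of any such circuit is $H_i$. The only difference is in execution: where the paper cites \cite[Theorem~10]{berczi2023hypergraph} to obtain that each $(r_i+1)$-subset of $H_i$ is a circuit and that $H_i$ is a flat of rank $r_i$, you derive these facts directly from the rank formula in Definition~\ref{elementary} using the intersection bound $\size{H_i\cap H_j}\leq r_i+r_j-n$. Your reverse inclusion argument (finding the index $j^*$ where the minimum is attained and deriving $C\subseteq H_{j^*}$ by contradiction) is a slightly different packaging of the paper's observation that a dependent set of size $\leq n$ must satisfy $\size{C\cap H_i}\geq r_i+1$ for some $i$, but it leads to the same conclusion. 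Your version has the advantage of being fully self-contained, at the cost of a few more explicit computations; the closure verification for $p\notin H_i$ that you flag as a potential obstacle goes through exactly as you outline, since $\size{C\setminus H_j}\geq (r_i+1)-(r_i+r_j-n)=n-r_j+1$ already gives $\size{(C\cup\{p\})\setminus H_j}+r_j\geq n+1>r_i+1$.
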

\begin{proof}
It is enough to show that the circuits of $M$ of size at most $n$ consist of the $(r_{i}+1)$-subsets of $H_{i}$ for each $i\in [q]$. %Establishing this will confirm the lemma.
From \textup{\cite[Theorem~10]{berczi2023hypergraph}}, we know that each $(r_{i}+1)$-subset of $H_{i}$ is a circuit for every $i\in [q]$. Now, let $C=\{x_{1},\ldots,x_{k}\}$ be a circuit with $k\leq n$. Since $C$ is a dependent set, by Definition~\ref{elementary}, there must exist some $i\in [q]$ such that $\size{C\cap H_{i}}\geq r_{i}+1$. Therefore, $C$ must indeed be an $(r_{i}+1)$-subset of $H_{i}$, since $C$ is a circuit. Furthermore, by the same theorem, each $H_{i}$ is a flat of rank $r_{i}$, so the closure of each $(r_{i}+1)$-subset of $H_{i}$ coincides with $H_{i}$. This implies that in the equivalence relation of Equation~\eqref{equiv}, there is precisely one equivalence class associated with each ${H}_{i}$, composed of the $(r_{i}+1)$-subsets of $H_{i}$. Therefore, we conclude that $\mathcal{L}_M=\mathcal{H}$.
%The second statement was proved in \textup{\cite[Remark~7]{berczi2023hypergraph}}.
\end{proof}

\subsection{Realization spaces of matroids}
In this subsection, we recall the definitions of the realization space of a matroid, along with its associated variety and ideal. Throughout the entire paper, we assume the field is the complex numbers. 
\begin{definition}\normalfont
Let $M$ be a matroid of rank $n$ on the ground set $[d]$. %, and let $r\geq n$. 
A realization of $M$ %in $\CC^{r}$ 
is a collection of vectors $\gamma=\{\gamma_{1},\ldots,\gamma_{d}\}\subset \CC^{n}$ such that
\[\{\gamma_{i_{1}},\ldots,\gamma_{i_{p}}\}\ \text{is linearly dependent} \Longleftrightarrow \{i_{1},\ldots,i_{p}\} \ \text{is a dependent set of $M$.}\]
The {\em realization space} of $M$ %in $\CC^{r}$ 
is defined as 
$\Gamma_{M}=\{\gamma\subset \CC^{n}: \gamma \ \text{is a realization of $M$}\}$.
Each element of $\Gamma_{M}$ can be represented by an $n\times d$ matrix over $\CC$, or equivalently an element of $\CC^{nd}$. 
\end{definition}

\begin{definition}
    The {\em matroid variety} $V_M$ %of $M$
    is defined as the Zariski closure  of 
    $\Gamma_M$ in $\CC^{nd}$. 
\end{definition}

We now introduce the {\em circuit variety} of a matroid.

\begin{definition}\normalfont\label{cir}
Let $M$ be a matroid of rank $n$ on the ground set $[d]$. 
Consider the $n\times d$ matrix $X=\pare{x_{i,j}}$ of indeterminates. We define the {\em circuit ideal} of $M$ as 
$$ I_{\mathcal{C}(M)} = \{ [A|B]_X:\ B \in \mathcal{C}(M),\ A \subset [n],\ \text{and}\ |A| = |B| \}.$$
We say that a collection $\gamma$ of vectors of $\CC^{n}$ indexed by $[d]$, includes the dependencies of $M$ if it satisfies:
\[\set{i_{1},\ldots,i_{k}}\  \text{is a dependent set of $M$} \Longrightarrow \set{\gamma_{i_{1}},\ldots,\gamma_{i_{k}}}\ \text{is linearly dependent}. \] 
The {\em circuit variety} of $M$ is defined as $V_{\mathcal{C}(M)}=V(I_{\mathcal{C}(M)})=\{\gamma:\text{$\gamma$ includes the dependencies of $M$}\}$.
Moreover, we define the \textit{basis ideal} of $M$ as follows:
\begin{equation}\label{jm}
J_{M}=\sqrt{
\prod_{B\in \mathcal{B}\pare{M}}\ip{[A|B]_{X}:\ A\subset [n]}{\size{A}=\size{B}}.}
\end{equation}
A collection of vectors belongs to $V(J_{M})$ if and only if it has a dependent subset that is not in $\mathcal{D}(\mathcal{M})$. Consequently, we have $\Gamma_{M}=V_{\mathcal{C}(M)}\backslash V(J_{M})$.
\end{definition}

\subsection{Realization spaces of hyperplane arrangements}
In this subsection, we will first recall the correspondence between hyperplane arrangements and matroids, and then provide an overview of the relationship between their realization spaces.

\begin{definition}\normalfont\label{combinatorics}
Any finite collection $\mathcal{H}=\{H_{1},\ldots,H_{d}\}$ of hyperplanes in $\CC^{n}$ is referred to as a {\em hyperplane arrangement}. For the purposes of this work, we %also 
assume that the intersection of these hyperplanes is empty.
The {\em complement manifold} $N(\mathcal{H})$ is defined as the complement of the union of the hyperplanes $H_{i}$ in $\CC^{n}$. For each arrangement $\mathcal{H}=\{H_{1},\ldots,H_{d}\}$ in $\CC^{n}$, we %can 
associate a matroid $M_{\mathcal{H}}$ on the ground set $[d]$, with the rank function defined on any subset $S\subset \{1,\ldots,d\}$ as follows 
\[\textstyle{\rank_{M_{\mathcal{H}}}(S)=\text{codim}\bigcap_{i\in S}H_{i}.}\]
Two hyperplane arrangements $\mathcal{H}$ and $\mathcal{H}'$ are said to have the same \textit{combinatorics} if $M_{\mathcal{H}}=M_{\mathcal{H}'}$. 
The \textit{realization space} $\Gamma_{\mathcal{H}}$ of a hyperplane arrangement $\mathcal{H}$ is defined as the set of all the hyperplane arrangements $\mathcal{H}'$ in $\CC^{n}$ such that $M_{\mathcal{H}'}=M_{\mathcal{H}}$.
\end{definition}

\subsubsection{\small{Correspondence between the realization spaces of matroids and hyperplane arrangements}}\label{correspondence} 

Let $M$ be a matroid of rank $n$ on the ground set $[d]$, and let
$\gamma\in \Gamma_{M}$. This defines a hyperplane arrangement composed of the orthogonal hyperplanes to each vector in $\gamma$, whose %combinatorics 
associated matroid equals $M$. Conversely, consider a hyperplane arrangement $\mathcal{H}=\{H_{1},\ldots,H_{d}\}$ in $\CC^{n}$. The collection of vectors $\gamma$, consisting of the normal vectors to the hyperplanes $H_{i}$, lies within the realization space of the matroid $M_{\mathcal{H}}$. 
Therefore, for a realizable matroid $M$, there exists a correspondence between $\Gamma_{M}$ and $\Gamma_{\mathcal{H}}$, where $\mathcal{H}$ is an arbitrary hyperplane arrangement such that $M_{\mathcal{H}}=M$.

\medskip
Under the aforementioned correspondence, we present the following examples.
\begin{example}\label{corresp}
\begin{itemize}
\item The realization spaces of line arrangements correspond to those of point-line configurations, which are paving matroids of rank $3$ (see e.g. \cite{Emiliano-Fatemeh4}). 
\item The realization spaces of inductively connected line arrangements, as defined in \cite{nazir2012connectivity}, correspond to those of solvable matroids of rank $3$, as introduced in \cite{Emiliano-Fatemeh5}.
\end{itemize}
\end{example}

We now present the definition of matroid stratum $\text{Gr}(M,\CC)$. First, we recall that 
the Grassmannian $\text{Gr}(n, d)$ is the space of all $n$-dim linear subspaces of $\mathbb{\mathbb{C}}^d$. To emphasize the underlying field, we may write $\text{Gr}(n,d,\CC)$.  A point $V$ in $\text{Gr}(n,d)$ can be represented by an $n\times d$ matrix with entries in $\CC$. 
Let $X=(x_{ij})$ be an $n\times d$ matrix of indeterminates. For a subset $\lambda = \{\lambda_1,\ldots,\lambda_n\} \in \textstyle{\binom{[d]}{n}}$, let $X_\lambda$ denote the $n\times n$ submatrix of $X$ with the column indices $\lambda_1,\ldots,\lambda_n$. 
The \emph{Pl\"ucker coordinates} of $V$ are $p_\lambda(V) = \text{det}(X_\lambda)$ for $\lambda \in\textstyle{\binom{[d]}{n}}$. They do not depend on the choice of matrix $X$ (up to simultaneous rescaling by a non-zero constant) and determine the \emph{Pl\"ucker embedding} of
$\text{Gr}(n,d)$ into $\mathbb{P}^{\binom{d}{n}-1}$.

\begin{definition}\normalfont
Let $M$ be a matroid of rank $[n]$ on the ground set $[d]$.
The {\em matroid stratum}~of~$M$~is:
\[\text{Gr}(M,\CC):=\{V\in \text{Gr}(n,d,\CC): p_{\lambda}(V)\neq 0 \ \text{if and only if}\ \lambda \in \mathcal{B}(M) \},\]
Fixing a reference basis $\lambda\in \mathcal{B}(M)$, we can view $\text{Gr}(M,\CC)$ as a quasi-affine variety, consisting of all $n\times d$ matrices (or equivalently, collections of vectors) of the form $(\Id_{n}|A)$ that realize $M$. 
\end{definition}

We now 
establish 
the precise relationship between $\text{Gr}(M,\CC)$ and $\Gamma_{M}$.

\begin{lemma}\label{igual 2}\normalfont
Let $M$ be a matroid of rank $n$ on the ground set $[d]$. Then, %we have
$\Gamma_{M} \cong \text{Gl}_{n}(\CC)\times \text{Gr}(M,\CC).$
Moreover, there is a %one-to-one 
bijection between the irreducible and connected components of~$\text{Gr}(M,\CC)$~and~$\Gamma_{M}$.
\end{lemma}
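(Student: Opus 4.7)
The plan is to construct an explicit isomorphism of quasi-affine varieties and then deduce the component statement from the irreducibility and connectedness of $\text{Gl}_n(\CC)$.

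First, fix the reference basis $\lambda \in \mathcal{B}(M)$ used in the definition of $\text{Gr}(M,\CC)$, so that elements of $\text{Gr}(M,\CC)$ are represented by $n\times d$ matrices whose submatrix on the columns indexed by $\lambda$ is $\Id_n$. I would then define the candidate isomorphism
\[
\Phi\colon \text{Gl}_n(\CC)\times \text{Gr}(M,\CC)\longrightarrow \Gamma_M,\qquad (g, V)\longmapsto g\cdot V,
\]
interpreted as left multiplication of the matrix $V$ by $g$. The image lies in $\Gamma_M$: left multiplication by an invertible matrix is a change of basis of $\CC^n$, so it preserves every linear dependence relation among the columns, and hence preserves the matroid.

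Next, I would construct the inverse $\Psi\colon \Gamma_M \to \text{Gl}_n(\CC)\times \text{Gr}(M,\CC)$. Given $\gamma\in \Gamma_M$, view it as an $n\times d$ matrix $N$. Since $\lambda\in \mathcal{B}(M)$, the columns $N_\lambda$ form a basis of $\CC^n$, so $N_\lambda\in \text{Gl}_n(\CC)$; set
\[
\Psi(N) = \bigl(N_\lambda,\ N_\lambda^{-1}N\bigr).
\]
The matrix $N_\lambda^{-1}N$ has $\Id_n$ in columns $\lambda$ and still realizes $M$, so it lies in $\text{Gr}(M,\CC)$. A direct check gives $\Phi\circ\Psi = \Id$ and $\Psi\circ\Phi = \Id$. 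Both maps are regular: $\Phi$ is polynomial, and $\Psi$ is regular on $\Gamma_M$ because the entries of $N_\lambda^{-1}$ are rational functions of the entries of $N$ with denominator $\det(N_\lambda)=p_\lambda(N)$, which is nonvanishing on $\Gamma_M$. This establishes the claimed isomorphism.

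For the second statement, recall that $\text{Gl}_n(\CC)$ is the principal open subset of $\CC^{n^2}$ defined by nonvanishing of the determinant; in particular it is irreducible and Zariski-connected (and also connected in the classical topology). Therefore the product $\text{Gl}_n(\CC)\times \text{Gr}(M,\CC)$ has an irreducible (resp.\ connected) component decomposition of the form $\text{Gl}_n(\CC)\times C$, where $C$ runs over the irreducible (resp.\ connected) components of $\text{Gr}(M,\CC)$. Combined with the isomorphism $\Phi$, this yields the desired bijection between the irreducible components and between the connected components of $\text{Gr}(M,\CC)$ and $\Gamma_M$. The only mildly delicate point is checking that the factor $\text{Gl}_n(\CC)$ does not split the components, which is exactly where irreducibility and connectedness of $\text{Gl}_n(\CC)$ enter; no substantial obstacle arises.
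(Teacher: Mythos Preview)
Your proof is correct and follows essentially the same approach as the paper: both construct the explicit map $(g,V)\mapsto gV$ with inverse $N\mapsto (N_\lambda,\,N_\lambda^{-1}N)$, check regularity in both directions, and then invoke the irreducibility/connectedness of $\text{Gl}_n(\CC)$ to deduce the component bijection. Your write-up is in fact slightly more careful about the choice of the reference basis $\lambda$ and about why the inverse is regular (nonvanishing denominator $p_\lambda$), but the argument is the same.
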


\begin{proof}
Consider the polynomial map 
\[F:\text{Gl}_{n}(\CC)\times \text{Gr}(M,\CC)\rightarrow \Gamma_{M}\quad\text{with}\quad
(A,(\Id_{n}|B))\mapsto (A|AB).
\]
Given that $\Gamma_{M}$ is stable under the action of $\text{Gl}_{n}(\CC)$, it is clear that $F$ is a bijection. Moreover, the inverse map $F^{-1}$, given by
$F^{-1}(A|B)=(A,(\Id_{n}|A^{-1}B))$
is also a polynomial map, thereby establishing the desired isomorphism.
Moreover, since $\text{Gl}_{n}(\CC)$ is irreducible, and in particular connected, there exists a correspondence between the irreducible and connected components of $\text{Gr}(M,\CC)$ and $\Gamma_{M}$, arising from the projection onto the second coordinate of $\text{Gl}_{n}(\CC)\times \text{Gr}(M,\CC)$.
\end{proof}

\begin{remark}\label{relation between varieties}
Let $M$ be a matroid of rank $n$ on the ground set $[d]$. Observe that our definition of the realization space $\Gamma_{M}$ aligns with that in \cite{clarke2021matroid,Emiliano-Fatemeh4}, where the ambient space is $\CC^{nd}$. However, it differs from the one in \cite{gelfand1987combinatorial, sidman2021geometric,ford2015expected},  where the realization space of $M$ is defined as $\text{Gr}(M,\CC)$, with ambient space $\text{Gr}(n,d)$. However, by Lemma~\ref{igual 2}, we have:
\[\dim(\Gamma_{M})=\dim(\text{Gr}(M,\CC))+n^{2}, \quad \text{and} \quad \dim(\text{Gr}(n,d))=n(d-n),\]
which implies that:
\[\text{codim}(\Gamma_{M})=\text{codim}(\text{Gr}(M,\CC)),\]
where the codimensions are taken relative to the ambient spaces $\CC^{nd}$ and $\text{Gr}(n,d)$, respectively.
\end{remark}

We will now discuss the motivation behind studying the connectivity of these spaces.

\begin{definition}%[Isotopic arrangements]
\normalfont A smooth one-parameter family of arrangements $\{\mathcal{H}_{t}:t\in I\}$ on an interval $I$ is called an \textit{isotopy} if, for any $t_{1},t_{2}\in I$, the arrangements $\mathcal{H}_{t_{1}}$ and $\mathcal{H}_{t_{2}}$ have 
the same combinatorial type. In this case, we say that $\mathcal{H}_{t_{1}}$ and $\mathcal{H}_{t_{2}}$ are \textit{isotopic}.
\end{definition}

The following theorem, commonly referred to as “Randell’s Lattice Isotopy Theorem”, clarifies the connection between the connectivity of realization spaces and the topology of their complements. 

\begin{theorem}[\textup{\cite{randell1989lattice}}]\label{ran}
 If the arrangements $\mathcal{H}_{t_1}$ and $\mathcal{H}_{t_2}$ are isotopic, then their complement manifolds, $N(\mathcal{H}_{t_1})$
 and $N(\mathcal{H}_{t_2})$, are diffeomorphic.
\end{theorem}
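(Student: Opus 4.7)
The plan is to upgrade the smooth isotopy $\{\mathcal{H}_t\}$ into a locally trivial smooth fibration whose fibers are the complement manifolds, and then exploit contractibility of the parameter interval to produce the diffeomorphism. Restrict to the subinterval $I' = [t_1, t_2] \subset I$ and form the total incidence set $Y = \{(x,t) \in \CC^{n} \times I' : x \in \bigcup_i H_i(t)\}$ together with its complement $\mathcal{X} = (\CC^{n} \times I') \setminus Y$ and the projection $\pi : \mathcal{X} \to I'$. If I can show that $\pi$ is a locally trivial smooth fibration, then the contractibility of $I'$ forces $\mathcal{X} \cong N(\mathcal{H}_{t_1}) \times I'$, and restricting this trivialization to the fiber over $t_2$ yields the desired diffeomorphism $N(\mathcal{H}_{t_1}) \cong N(\mathcal{H}_{t_2})$.

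The key step is establishing local triviality, and since the fibers are non-compact Ehresmann's theorem does not apply directly. The approach I would take is Thom's first isotopy lemma applied to the natural stratification of $\CC^{n} \times I'$ indexed by flats of the common matroid $M$: for each flat $F$, one stratum consists of pairs $(x,t)$ lying in exactly the hyperplanes indexed by $F$ at time $t$. Because $M_{\mathcal{H}_t} = M$ is independent of $t$, each such stratum is a smooth submanifold and the restriction of $\pi$ to it is a submersion onto $I'$. The linearity of the defining forms $\alpha_i(\cdot,t)$ in $x$ reduces verification of Whitney's conditions (a) and (b) to a direct computation on limits of tangent spaces, which is routine since all strata are affine bundles over open sets of $I'$. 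Thom's lemma then produces a controlled stratified lift of $\partial/\partial t$ whose flow preserves every stratum and therefore carries $N(\mathcal{H}_{t_1})$ diffeomorphically onto $N(\mathcal{H}_{t_2})$.

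A more hands-on variant is to construct such a lift explicitly. Writing $\mathcal{H}_t = \{\alpha_i(\cdot,t) = 0\}_{i=1}^d$ with $\alpha_i$ smooth in $t$, I would seek a vector field of the form $V = W + \partial/\partial t$ on $\CC^{n} \times I'$ satisfying $W(\alpha_i) = -\partial_t \alpha_i$ along $\{\alpha_i = 0\}$ for every $i$; the flow of $V$ then maps each hyperplane to its later incarnation, and hence complements to complements. Locally near a point lying on the flat indexed by $F$, this reduces to a linear system in $|F|$ equations whose solvability follows precisely from the constancy of $\rank_M(F)$ along the isotopy. The main obstacle I anticipate is passing from these pointwise solutions to a globally smooth $W$ near the deep intersections where strata of different dimensions meet: a partition of unity combines the local choices, but one must ensure that the glued vector field is simultaneously tangent to every stratum. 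This compatibility is exactly the content of Whitney regularity for the combinatorial stratification, and it is the technical heart of the theorem.
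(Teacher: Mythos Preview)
The paper does not prove this theorem at all: it is stated with a citation to Randell's original paper \cite{randell1989lattice} and used as a black box, so there is no ``paper's own proof'' to compare against. Your sketch is essentially the standard argument (and close in spirit to Randell's), so in that sense you have supplied what the paper deliberately omits.

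One technical point worth flagging in your outline: Thom's first isotopy lemma, like Ehresmann's theorem, requires the restriction of $\pi$ to each stratum to be \emph{proper}, and the strata of a hyperplane arrangement in $\CC^n$ are not compact. You acknowledge non-compactness as an obstacle to Ehresmann but then invoke Thom without addressing the same issue. The usual fixes are either to projectivize (add a hyperplane at infinity, which is constant along the isotopy, and work in $\PP^n$ where the strata become proper over $I'$) or, in the explicit vector-field variant, to verify that the constructed lift $V$ has complete flow---here the linearity of the $\alpha_i$ in $x$ lets one choose $W$ to grow at most linearly, which suffices. Either route closes the gap, but as written your proposal glosses over this step.
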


As a consequence of Theorem~\ref{ran}, and given that any connected space defined by a system of polynomial equalities and inequalities is path connected we derive the following corollary. 

 \begin{corollary}\label{coro ar}
Let $\mathcal{H}$ be a hyperplane arrangement. If the arrangements $\mathcal{H}_{1}$ and $\mathcal{H}_{2}$ belong to the same irreducible component of $\Gamma_{\mathcal{H}}$, then  $N(\mathcal{H}_{1})\cong N(\mathcal{H}_{2})$.
\end{corollary}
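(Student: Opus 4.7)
The plan is to chain the Lattice Isotopy Theorem (Theorem~\ref{ran}) together with the quasi-algebraic nature of $\Gamma_{\mathcal{H}}$. By Definition~\ref{combinatorics} and the discussion in \S\ref{correspondence}, $\Gamma_{\mathcal{H}}$ can be identified with the realization space $\Gamma_{M_{\mathcal{H}}}$ of its combinatorics, which, by Definition~\ref{cir}, equals $V_{\mathcal{C}(M_{\mathcal{H}})}\setminus V(J_{M_{\mathcal{H}}})$. In particular, $\Gamma_{\mathcal{H}}$ is a constructible (locally closed) subset of $\CC^{nd}$, cut out by polynomial equalities (vanishing of minors indexed by circuits) together with polynomial inequalities (non-vanishing of basis minors).

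First, I would fix an irreducible component $W$ of $\Gamma_{\mathcal{H}}$ containing both $\mathcal{H}_{1}$ and $\mathcal{H}_{2}$. Being an irreducible complex constructible set, $W$ is connected in the Euclidean topology. Viewing $\CC^{nd}$ as $\RR^{2nd}$, the set $W$ is also semi-algebraic, so by the fact recalled just before the corollary—that any connected set defined by polynomial equalities and inequalities is path connected—there exists a continuous path $t\mapsto \mathcal{H}_{t}$, $t\in[0,1]$, inside $W$ from $\mathcal{H}_{1}$ to $\mathcal{H}_{2}$.

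Next, I would upgrade this path to a smooth one, which is a standard property of semi-algebraic sets (for instance, using a semi-algebraic triangulation of $W$ and reparametrizing, or by perturbing the path slightly within $W$). Because every point of the path lies in $W\subseteq\Gamma_{\mathcal{H}}$, all the arrangements $\mathcal{H}_{t}$ share the same combinatorics $M_{\mathcal{H}}$, so $\{\mathcal{H}_{t}:t\in[0,1]\}$ is an isotopy in the sense of the definition preceding Theorem~\ref{ran}. Applying that theorem then yields $N(\mathcal{H}_{1})\cong N(\mathcal{H}_{2})$, as desired.

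The only genuinely nontrivial point is the passage from path-connectedness to \emph{smooth} path-connectedness required by Randell's theorem; I expect this to be the main (but mild) obstacle. It is handled by standard semi-algebraic geometry, so I would invoke it by citation rather than reprove it. The rest of the argument is essentially bookkeeping once one identifies $\Gamma_{\mathcal{H}}$ with the quasi-affine set $V_{\mathcal{C}(M_{\mathcal{H}})}\setminus V(J_{M_{\mathcal{H}}})$.
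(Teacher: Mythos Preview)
Your argument is correct and follows the same route as the paper: the paper derives the corollary in one sentence, invoking Theorem~\ref{ran} together with the fact that a connected set defined by polynomial equalities and inequalities is path connected. Your version is simply a more careful unpacking of that sentence---identifying $\Gamma_{\mathcal{H}}$ with $V_{\mathcal{C}(M_{\mathcal{H}})}\setminus V(J_{M_{\mathcal{H}}})$, noting that an irreducible component is Euclidean-connected, and then passing to a path; you also rightly flag the upgrade from continuous to smooth path (needed for Randell's theorem) as the one genuine step, which the paper leaves implicit.
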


\subsection{Grassmann-Cayley algebra}\normalfont
We now recall some results from \textup{\cite{computationalgorithms}}. The Grassmann-Cayley algebra is defined as the exterior algebra $\textstyle \bigwedge (\CC^{d})$ equipped with two operations: the \textit{join} and the \textit{meet}. We denote the join of vectors $v_{1},\ldots ,v_{k}$ as $v_{1}\vee \ldots \vee v_{k}$, or simply by $v_{1}\cdots v_{k}$, referring to it as an extensor. The meet operation, denoted by $\wedge$, is defined on two extensors $v=v_{1}\cdots v_{k}$ and $w=w_{1}\cdots w_{j}$, where $j+k\geq d$, as follows:

\begin{equation}\label{equ wedge}
v\wedge w=\sum_{\sigma\in \mathcal{S}(k,j,d)}
\corch{v_{\sigma(1)}\cdots v_{\sigma(d-j)}w_{1}\cdots w_{j}}\cdot v_{\sigma(d-j+1)}\cdots v_{\sigma(k)}.
\end{equation}
Here, $ \mathcal{S}(k,j,d)$ denotes the set of all permutations of $\corch{k}$ such that $\sigma(1)<\cdots <\sigma(d-j)$ and $\sigma(d-j+1)<\cdots <\sigma(d)$. When $j+k<d$, the meet is defined to be $0$. There is a correspondence between the extensor $v=v_{1}\cdots v_{k}$ and the subspace %$\overline{v}=\ip{v_{1},\ldots}{v_{k}}$ 
generated by $\{v_{1},\ldots,v_{k}\}$, which satisfies the following properties:

\begin{lemma}\label{klj}

Let $v=v_{1}\cdots v_{k}$ and $w=w_{1}\cdots w_{j}$ be two extensors with $j+k\geq d$. Let $\overline{v}$ denote the subspace $\ip{v_{1},\ldots}{v_{k}}$.
Then, the following statements hold:

\begin{itemize}
\item The extensor $v$ is zero if and only if the vectors $v_{1},\ldots,v_{k}$ are linearly dependent.
\item Any extensor $v$ is uniquely determined by the subspace $\overline{v}$ it generates, up to a scalar multiple.
\item The meet of two extensors is itself an extensor.
\item The meet $v\wedge w$ is nonzero if and only if $\ip{\overline{v}}{\overline{w}}=\CC^{d}$. In this case, we have  $\overline{v}\cap\overline{w}=\overline{v\wedge w}.$
\end{itemize}
\end{lemma}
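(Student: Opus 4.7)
The plan is to establish the four items in order, using standard facts about the exterior algebra $\bigwedge(\CC^{d})$ and, for the statements involving the meet, a reduction to a carefully chosen basis.

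For item one, I would invoke the standard description of $\bigwedge^{k}\CC^{d}$: if $\set{e_{1},\ldots,e_{d}}$ is a basis of $\CC^{d}$, the wedges $e_{i_{1}}\cdots e_{i_{k}}$ with $i_{1}<\cdots<i_{k}$ form a basis of $\bigwedge^{k}\CC^{d}$. Expanding $v_{1}\cdots v_{k}$ in this basis, the coefficients are precisely the $k\times k$ minors of the matrix of the $v_{i}$; these all vanish iff the $v_{i}$ are linearly dependent. Item two is then a direct consequence: if $\overline{v}=\overline{v'}$ then the two generating systems are related by a change of basis matrix $A\in \text{Gl}_{k}(\CC)$, and the multilinear-alternating property of the wedge yields $v'_{1}\cdots v'_{k}=\det(A)\cdot v_{1}\cdots v_{k}$.

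For items three and four, I would first reduce to a convenient basis. If $\overline{v}+\overline{w}\ne \CC^{d}$, pick any hyperplane $H$ containing both $\overline{v}$ and $\overline{w}$ and extend a basis of $H$ to a basis of $\CC^{d}$; every bracket $[v_{\sigma(1)}\cdots v_{\sigma(d-j)}w_{1}\cdots w_{j}]$ appearing in Equation~\eqref{equ wedge} then evaluates to zero, so $v\wedge w=0$, confirming the ``only if'' half of item four. Conversely, assume $\overline{v}+\overline{w}=\CC^{d}$, so $\dim(\overline{v}\cap \overline{w})=k+j-d=:s$. Choose a basis $u_{1},\ldots,u_{s}$ of $\overline{v}\cap \overline{w}$, complete it to a basis $u_{1},\ldots,u_{s},a_{1},\ldots,a_{d-j}$ of $\overline{v}$ and to a basis $u_{1},\ldots,u_{s},b_{1},\ldots,b_{d-k}$ of $\overline{w}$; the collection $\set{u_{1},\ldots,u_{s},a_{1},\ldots,a_{d-j},b_{1},\ldots,b_{d-k}}$ is then a basis of $\CC^{d}$. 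By item two I may replace $v$ and $w$ (up to nonzero scalars) by the corresponding extensors $u_{1}\cdots u_{s}a_{1}\cdots a_{d-j}$ and $u_{1}\cdots u_{s}b_{1}\cdots b_{d-k}$.

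The core computation is then to evaluate the meet in this adapted basis. In the expansion of Equation~\eqref{equ wedge}, the bracket
$[v_{\sigma(1)}\cdots v_{\sigma(d-j)}w_{1}\cdots w_{j}]$ is nonzero only when the chosen subset $\set{v_{\sigma(1)},\ldots,v_{\sigma(d-j)}}$ equals $\set{a_{1},\ldots,a_{d-j}}$, because any inclusion of a $u_{i}$ creates a repetition with the $w$-block. Exactly one permutation (up to sign) survives, and the surviving term is a nonzero scalar times $u_{1}\cdots u_{s}$; this simultaneously verifies that $v\wedge w$ is a (nonzero) extensor and that $\overline{v\wedge w}=\overline{v}\cap \overline{w}$, completing items three and four in the nondegenerate case. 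The extensor property in item three in the degenerate case is trivial since $v\wedge w=0$. I expect the main technical obstacle to be item three, where one must argue that the raw sum in Equation~\eqref{equ wedge}, which is a priori only an element of $\bigwedge^{k+j-d}\CC^{d}$, is decomposable; the adapted-basis reduction above bypasses this by reducing to a situation in which the sum collapses to a single monomial, thereby giving decomposability for free.
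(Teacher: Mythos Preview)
The paper does not give its own proof of this lemma: it is stated in the preliminaries as a recall of standard facts about the Grassmann--Cayley algebra, with an explicit pointer to \cite{computationalgorithms}, and no argument follows the statement. So there is no proof in the paper to compare your attempt against.

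That said, your proposed argument is correct and is essentially the classical one. Items one and two are textbook exterior-algebra facts, and your adapted-basis reduction for items three and four is exactly the standard route: once $v$ and $w$ are written in terms of a basis $u_{1},\ldots,u_{s},a_{1},\ldots,a_{d-j},b_{1},\ldots,b_{d-k}$ adapted to $\overline{v}\cap\overline{w}$, the only bracket in Equation~\eqref{equ wedge} that survives is the one selecting $a_{1},\ldots,a_{d-j}$, and the meet collapses to a nonzero multiple of $u_{1}\cdots u_{s}$. One small point worth making explicit is that the replacement step ``by item two I may replace $v$ and $w$ up to scalars'' tacitly uses that the meet in Equation~\eqref{equ wedge} depends only on the extensors $v,w\in\bigwedge(\CC^{d})$ and not on the particular factorizations $v_{1}\cdots v_{k}$, $w_{1}\cdots w_{j}$; this is true (the formula is multilinear and alternating in each block), but since the paper defines the meet via a specific representative, a one-line remark to that effect would make the reduction airtight.
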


\begin{example}\label{gc3}
Consider the 
matroid $M$ in Figure~\ref{fig:combined} (Left). Let $\{p_{1},\ldots,p_{7}\}\subset \CC^{3}$ be a collection of vectors realizing this matroid.
Since the lines $\{p_{1}p_{2},p_{3}p_{4},p_{5}p_{6}\}$ are concurrent, the intersection point of the lines $p_{1}p_{2}$ and $p_{3}p_{4}$ lies on the line $p_{5}p_6$. Therefore, the condition for  
$\{p_1 p_2,p_3 p_4,p_5 p_6\}$ to be concurrent is given by the vanishing of the following expression:
$$p_3 p_4 \wedge p_1 p_2 \vee p_5 p_6 = (\corch{p_3 p_1 p_2} p_4 - \corch{p_4 p_1 p_2} p_3) \vee p_5 p_6 = \corch{p_1 p_2 p_3} \corch{p_4 p_5 p_6} - \corch{p_1 p_2 p_4} \corch{p_3 p_5 p_6}.$$
Thus, this polynomial is one of the defining equations of $\Gamma_M$. More specifically, this polynomial, together with the three polynomials associated with the circuits, defines the matroid.
\end{example}

We conclude this subsection with the following lemma, \cite[Lemma~5.29]{Emiliano-Fatemeh5}, which is pivotal in \S\ref{sec:realization}.

\begin{lemma}\label{basis}
Let $S,T\subset \CC^{n}$ be subspaces of dimensions $k$ and $n-k$, respectively, such that $S\cap T=\set{0}$. Suppose that $v_{1},\ldots,v_{k}$ are vectors such that $T+\is{v_{1},\ldots,v_{k}}=\CC^{n}$. For each $i\in [k]$, let $t_{i}$ be an extensor associated with the subspace $T+\is{v_{i}}$ and let $s$ be an extensor associated with $S$. Then, the vectors 
$\set{t_{i}\wedge s: i\in [k]}$
form a basis of $S$.
\end{lemma}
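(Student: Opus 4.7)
The plan is to identify each vector $t_i \wedge s$ concretely as a (nonzero) element of the one-dimensional intersection $(T+\langle v_i\rangle)\cap S$ and then exploit the direct sum decomposition $\mathbb{C}^n = T \oplus \langle v_1,\ldots,v_k\rangle$ to prove linear independence.

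First I would verify the hypotheses of Lemma~\ref{klj} are met. The subspaces $T+\langle v_i\rangle$ and $S$ have dimensions $n-k+1$ and $k$ respectively (note $v_i\notin T$, since otherwise $T+\langle v_1,\ldots,v_k\rangle$ would be strictly contained in a subspace of dimension less than $n$, contradicting the assumption that it equals $\mathbb{C}^n$). Since $S+T=\mathbb{C}^n$ (as $S\cap T=\{0\}$ and dimensions add to $n$), a fortiori $(T+\langle v_i\rangle)+S=\mathbb{C}^n$. By Lemma~\ref{klj}, $t_i \wedge s$ is therefore a nonzero extensor associated with $(T+\langle v_i\rangle)\cap S$, which has dimension $(n-k+1)+k-n=1$. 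Thus $t_i\wedge s$ is a nonzero vector in $S$ that can be written as $t_i\wedge s = \alpha_i(w_i+v_i)$ for some $w_i\in T$ and $\alpha_i\in \mathbb{C}^{*}$ (the coefficient of $v_i$ cannot vanish, else $t_i\wedge s\in T\cap S=\{0\}$).

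Next I would pin down the geometry of $\langle v_1,\ldots,v_k\rangle$. The assumption $T+\langle v_1,\ldots,v_k\rangle =\mathbb{C}^n$ combined with $\dim T = n-k$ forces $\dim\langle v_1,\ldots,v_k\rangle\geq k$; hence $v_1,\ldots,v_k$ are linearly independent and $T\cap \langle v_1,\ldots,v_k\rangle=\{0\}$, giving the direct sum $\mathbb{C}^n = T \oplus \langle v_1,\ldots,v_k\rangle$.

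Finally, to prove linear independence of $\{t_i\wedge s : i\in[k]\}$, suppose $\sum_i \beta_i(w_i+v_i)=0$. Rearranging yields $\sum_i \beta_i v_i = -\sum_i \beta_i w_i \in \langle v_1,\ldots,v_k\rangle \cap T = \{0\}$, and since the $v_i$ are linearly independent, all $\beta_i=0$. As $\dim S=k$, these $k$ independent vectors form a basis of $S$. The only subtle step is the verification that $v_i\notin T$ and that $\langle v_1,\ldots,v_k\rangle$ is complementary to $T$; I expect this dimension-counting argument to be the main (and essentially only) technical point.
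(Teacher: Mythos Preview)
Your argument is correct. The paper does not actually prove this lemma; it is quoted without proof from \cite[Lemma~5.29]{Emiliano-Fatemeh5}. Your proof---identifying each $t_i\wedge s$ via Lemma~\ref{klj} as a nonzero generator of the one-dimensional space $(T+\langle v_i\rangle)\cap S$, writing it as $\alpha_i(w_i+v_i)$ with $w_i\in T$ and $\alpha_i\neq 0$, and then using the direct-sum decomposition $\mathbb{C}^n=T\oplus\langle v_1,\ldots,v_k\rangle$ to deduce linear independence---is clean and complete. One small wording point: when you write ``suppose $\sum_i\beta_i(w_i+v_i)=0$'' you have tacitly absorbed the nonzero scalars $\alpha_i$ into the $\beta_i$; this is harmless but worth making explicit.
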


\section{Naive dimension of the realization space}\label{sec 3}

We define the notion of naive dimension for the realization space of matroids, which was introduced in \cite{guerville2023connectivity} for line arrangements, or matroids of rank $3$, and we will extend it to matroids of arbitrary rank.

\begin{definition}[Naive dimension]\normalfont \label{naiv defi}
Let $M$ be a matroid of rank $n$ on the ground set $[d]$, with the set of subspaces $\mathcal{L}_{M}$. % elements in its ground set. 
We define the \textit{naive dimension} of the realization space $\Gamma_{M}$ as follows: 
\begin{equation}\label{ecuacion naive} \dnaive( \Gamma_{M})=nd-\sum_{l\in \mathcal{L}_M}(\size{l}-\rank(l))(n-\rank(l)).
\end{equation}
\end{definition}

\begin{remark}
The right-hand side of~\eqref{ecuacion naive} %Equation~\eqref{ecuacion naive} 
can be viewed as a naive attempt to define the dimension of the realization space  $\Gamma_{M}$. The number $nd$ corresponds to the number of variables in the $n\times d$ matrix of indeterminates $X=(x_{i,j})$. Furthermore, the condition that each collection of $\rank(l)+1$ vectors from $l\in\mathcal{L}_M$ is dependent can be viewed as the vanishing of $(\size{l}-\rank(l))(n-\rank(l))$-minors of $X$.
\end{remark}

Now we will prove that the \textit{naive dimension} serves as a lower bound for the dimension of $\Gamma_{M}$.

\begin{theorem}\label{bound on dimension}
Let $M$ be a realizable matroid of rank $n$ on $[d]$. Then, we have
$\dnaive(\Gamma_{M})\leq \dim(\Gamma_{M})$.
\end{theorem}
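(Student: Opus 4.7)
I aim to bound $\text{codim}_{\CC^{nd}}(\Gamma_M)$ from above by $\sum_{l\in\mathcal{L}_M}(|l|-\rank(l))(n-\rank(l))$, which is equivalent to the claimed inequality. For each subspace $l \in \mathcal{L}_M$ of rank $k:=\rank(l)$, let $Y_l\subset \CC^{nd}$ denote the variety of configurations $\gamma$ whose submatrix with columns indexed by the point set of $l$ has rank at most $k$. This $Y_l$ is a classical determinantal variety of codimension $(|l|-k)(n-k)$ in $\CC^{nd}$, smooth at every point where the indexed submatrix has rank exactly $k$.

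\textbf{Identification of the circuit variety.} The key step is the set-theoretic identity $V_{\mathcal{C}(M)} = \bigcap_{l\in\mathcal{L}_M} Y_l$. For $V_{\mathcal{C}(M)}\subseteq Y_l$, take $\gamma\in V_{\mathcal{C}(M)}$, fix $l$, and consider any $(k+1)$-subset $S$ of the point set of $l$. The points of $l$ all lie in the common closure $\closure{C}$ of any circuit $C\in l$, which is a flat of rank $k$, so $\rank_M(S)\leq k$. Hence $S$ is dependent and contains a circuit $C'$ of size at most $k+1\leq n$; since the minors of $C'$ belong to $I_{\mathcal{C}(M)}$, the vectors $\{\gamma_p\}_{p\in C'}$ are linearly dependent, forcing those indexed by $S$ to be linearly dependent too. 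Therefore the columns of $\gamma$ indexed by $l$ have rank at most $k$, i.e.\ $\gamma \in Y_l$. Conversely, each circuit $C$ with $|C|\leq n$ is contained in its own equivalence class $l_C\in\mathcal{L}_M$, which has rank $|C|-1$, and the rank condition defining $Y_{l_C}$ immediately forces $C$ to be linearly dependent; circuits of size exceeding $n$ are automatically dependent in $\CC^n$.

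\textbf{Local dimension count and conclusion.} Pick any realization $\gamma\in\Gamma_M$; then by definition of a realization, the submatrix of $\gamma$ indexed by $l$ has rank exactly $\rank(l)$ for every $l\in\mathcal{L}_M$, so $\gamma$ lies in the smooth locus of every $Y_l$. Locally at $\gamma$, each $Y_l$ is therefore cut out by exactly $(|l|-\rank(l))(n-\rank(l))$ equations, and their union cuts out $V_{\mathcal{C}(M)}=\bigcap_l Y_l$ set-theoretically by a total of $\sum_{l\in\mathcal{L}_M}(|l|-\rank(l))(n-\rank(l))$ equations. Krull's height theorem then yields $\dim_\gamma V_{\mathcal{C}(M)}\geq nd-\sum_l(|l|-\rank(l))(n-\rank(l))=\dnaive(\Gamma_M)$. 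Finally, since $\Gamma_M=V_{\mathcal{C}(M)}\setminus V(J_M)$ is a Zariski open subset of $V_{\mathcal{C}(M)}$, the local dimensions at $\gamma$ coincide, so $\dim(\Gamma_M)\geq\dim_\gamma(\Gamma_M) = \dim_\gamma V_{\mathcal{C}(M)} \geq\dnaive(\Gamma_M)$.

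\textbf{Main obstacle.} The delicate step is the identity $V_{\mathcal{C}(M)}=\bigcap_{l\in\mathcal{L}_M}Y_l$: subspaces package circuits into equivalence classes by closure, so translating between the circuit-by-circuit generators of $I_{\mathcal{C}(M)}$ and the single rank condition defining each $Y_l$ requires care. The argument crucially uses that every $(k+1)$-subset of a rank-$k$ subspace already contains a circuit of size at most $n$, not just circuits of size $k+1$ inside the equivalence class. Once this identity is established, the remainder is a direct application of the smoothness of determinantal varieties at maximal-rank points combined with Krull's principal ideal theorem.
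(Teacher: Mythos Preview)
Your proof is correct and follows essentially the same approach as the paper: both arguments show that $\Gamma_M$ is locally an open subset of a variety cut out by $\sum_{l}(|l|-\rank(l))(n-\rank(l))$ equations in $\CC^{nd}$ and then apply Krull's principal ideal theorem. The only difference is presentational: you invoke the smoothness and codimension of determinantal varieties at maximal-rank points abstractly, while the paper explicitly fixes a basis $B_l$ for each subspace, passes to an open cover indexed by choices of row-sets $C_l$, and writes down the specific $(\rank(l)+1)$-minors that locally cut out the rank condition; both packages encode the same local complete intersection property.
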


\begin{proof}
Let $X=(x_{ij})$ be an $n\times d$ matrix of indeterminates. For each subspace $l\in \mathcal{L}_M$, we fix a basis $B_{l}=\{q_{l,1},\ldots,q_{l,\rank(l)}\}$. 
For any collection $C$ of subsets $(C_{l})_{l\in \mathcal{L}_M}\in \textstyle \prod_{l\in \mathcal{L}_M}\textstyle \binom{[n]}{\rank(l)}$, we define $\Gamma_{M,C}$ to be the subset of realizations $\gamma \in \Gamma_{M}$ such that $\det([C_{l}|B_{l}]_{X})$ does not vanish on the vectors of $\gamma$. Since the vectors $\{\gamma_{p}:p\in B_{l}\}$ are independent for each $l\in \mathcal{L}_M$, it follows that for each $\gamma \in \Gamma_{M}$ and each $l\in \mathcal{L}_M$, there exists $C_{l}\in \textstyle \binom{[n]}{\rank(l)}$ such that $\det([C_{l}|B_{l}]_{X})$ does not vanish on the vectors of $\gamma$. Therefore, we obtain the open cover
\begin{equation}\label{union}
\Gamma_{M}=\bigcup_{C} \Gamma_{M,C}, 
\end{equation}
where the union is taken over $\textstyle \prod_{l\in \mathcal{L}_M}\textstyle \binom{[n]}{\rank(l)}$. Now we fix a specific $C$, and aim to bound the dimension of $\Gamma_{M,C}$. Note that for any $C_{l}\rq \times B_{l}\rq \in \textstyle \binom{[l]}{\rank(l)+1}\times \binom{[n]}{\rank(l)+1}$, where $[l]$ denotes the elements of $l$, we have that $\det([C_{l}\rq|B_{l}\rq]_{X})$ vanishes on $\Gamma_{M}$, since any collection of $\rank(l)+1$ columns associated with elements of $l$ are dependent in $\Gamma_{M}$. On the other hand, since $\det([C_{l}|B_{l}]_{X})\neq 0$ on $\Gamma_{M,C}$, the vanishing of the minors $\det([C_{l}\rq|B_{l}\rq]_{X})$ for $C_{l}\rq \times B_{l}\rq \in \textstyle \binom{[l]}{\rank(l)+1}\times \binom{[n]}{\rank(l)+1}$ with $C_{l}\subset C_{l}\rq$ and $B_{l}\subset B_{l}\rq$ serves as a sufficient condition for the columns associated to $l$ to have rank at most $\rank(l)$. We denote this family of minors by $K_{l}$, which consists of $(\size{l}-\rank(l))(n-\rank(l))$ polynomials. Since $\Gamma_{M}$ is an open set of $V_{\mathcal{C}(M)}$, it follows that $\Gamma_{M,C}$ is an open Zariski subset of 
$\textstyle{\bigcap_{l\in \mathcal{L}_M}V(K_{l})}$.

Consequently, $\Gamma_{M,C}$ is characterized as the intersection of at most $\textstyle \sum_{l\in \mathcal{L}_M}(\size{l}-\rank(l))(n-\rank(l))$ hypersurfaces within an open Zariski subset of the affine space of dimension $nd$. Then, by applying well-known results about the dimension of hypersurface intersections, we conclude that \[\dnaive(\Gamma_{M})\leq \dim(\Gamma_{M,C})  \quad  \text{or} \quad \Gamma_{M,C}=\emptyset.\]  Then, using the open cover of $\Gamma_{M}$ as in Equation~\eqref{union}, we obtain that $\dnaive(\Gamma_{M})\leq \dim(\Gamma_{M})$.
\end{proof}

We now present two families of examples, namely the elementary split matroids and the paving matroids, where we can explicitly compute the naive dimension. We recall our notation from \S\ref{sec:split}.

\begin{lemma}\label{lema split}
Let $M$ be an elementary split matroid of %the hypergraph 
$\mathcal{H}=\{H_{1},\ldots,H_{q}\}$ as in Definition~\ref{elementary}. Then,  
\[\dnaive(\Gamma_{M})=nd-\sum_{i=1}^{q}(n-r_{i})(\size{H_{i}}-r_{i}).\]
\end{lemma}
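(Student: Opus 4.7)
The plan is to unfold the definition of naive dimension and match each summand on the right-hand side with a term indexed by a hyperedge $H_i$, using Lemma~\ref{subspaces split} to transfer the sum from $\mathcal{L}_M$ to $\mathcal{H}$. By Definition~\ref{naiv defi} we have
\[
\dnaive(\Gamma_M)=nd-\sum_{l\in\mathcal{L}_M}(|l|-\rank(l))(n-\rank(l)),
\]
so it suffices to show that the bijection $\mathcal{L}_M\leftrightarrow\mathcal{H}$ of Lemma~\ref{subspaces split} matches $|l|$ with $|H_i|$ and $\rank(l)$ with $r_i$.

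First I would recall from the proof of Lemma~\ref{subspaces split} that the equivalence class $l_i\in\mathcal{L}_M$ corresponding to $H_i$ consists exactly of the $(r_i+1)$-subsets of $H_i$, and that $H_i$ itself is a flat of rank $r_i$. Then I would verify the two equalities. For the rank: any circuit $C\in l_i$ has size $r_i+1$ and is minimally dependent, so $\rank(C)=r_i$, giving $\rank(l_i)=r_i$ by the definition of subspace rank in Definition~\ref{general}. For the size: a point $p\in\mathcal{P}_M$ belongs to $l_i$ iff it lies in some circuit of $l_i$, i.e., in some $(r_i+1)$-subset of $H_i$; since $|H_i|\geq r_i+1$, every element of $H_i$ lies in such a subset, while no element outside $H_i$ does. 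Hence the set of points of $l_i$ equals $H_i$, so $|l_i|=|H_i|$.

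Substituting into the sum yields
\[
\sum_{l\in\mathcal{L}_M}(|l|-\rank(l))(n-\rank(l))=\sum_{i=1}^{q}(|H_i|-r_i)(n-r_i),
\]
and combining with the initial expression for $\dnaive(\Gamma_M)$ gives the claim. There is essentially no obstacle here: the content is packaged in Lemma~\ref{subspaces split} and Definition~\ref{elementary}, and the only point that requires a brief justification is the identification of $|l_i|$ with $|H_i|$ (as opposed to, say, the number of circuits inside $l_i$), which follows immediately from the convention in Definition~\ref{general} that $|l|$ counts the points of $\mathcal{P}_M$ lying in~$l$.
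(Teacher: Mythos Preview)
Your proof is correct and follows exactly the same approach as the paper: the paper's proof is the single line ``The equality follows directly from Lemma~\ref{subspaces split},'' and your argument simply spells out the identifications $|l_i|=|H_i|$ and $\rank(l_i)=r_i$ that make this ``directly'' precise. The extra care you take in justifying $|l_i|=|H_i|$ via Definition~\ref{general} is appropriate and does not diverge from the intended argument.
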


\begin{proof}
The equality follows directly from Lemma~\ref{subspaces split}.
\end{proof}

If $M$ is an $n$-paving matroid, the formula of the {\em naive dimension} of its realization space can be expressed in terms of the number of its dependent hyperplanes and the degrees of its points.

\begin{lemma}\label{na pav}\normalfont
Let $M$ be an $n$-paving matroid on $[d]$. Then, we have 
\begin{equation}\label{naive pav} \dnaive(\Gamma_{M})
=nd+(n-1)\size{\mathcal{L}_M}-\sum_{p\in \mathcal{P}_M}\text{deg}(p).\end{equation}
\end{lemma}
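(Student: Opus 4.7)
The plan is to use the definition of naive dimension directly, exploiting the specific structure of paving matroids, and conclude with a standard double-counting identity.

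First, I would recall that by Definition~\ref{pav}, the set $\mathcal{L}_M$ for an $n$-paving matroid consists precisely of the dependent hyperplanes, and each such subspace $l \in \mathcal{L}_M$ has $\rank(l) = n-1$. Substituting $\rank(l) = n-1$ into the general formula from Definition~\ref{naiv defi}, each term in the sum simplifies as
\[ (|l| - \rank(l))(n - \rank(l)) = (|l| - (n-1)) \cdot 1 = |l| - n + 1. \]
Summing over $l \in \mathcal{L}_M$ and plugging back into the definition of $\dnaive(\Gamma_M)$ yields
\[ \dnaive(\Gamma_M) = nd - \sum_{l \in \mathcal{L}_M}\bigl(|l| - n + 1\bigr) = nd + (n-1)|\mathcal{L}_M| - \sum_{l \in \mathcal{L}_M} |l|. \]

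The final step is to rewrite $\sum_{l \in \mathcal{L}_M} |l|$ in terms of point degrees by a standard double-counting of the incidence relation
\[ \{(p,l) \in \mathcal{P}_M \times \mathcal{L}_M : p \in l\}. \]
Counting this set by $l$ gives $\sum_{l\in\mathcal{L}_M}|l|$, while counting it by $p$ gives $\sum_{p\in\mathcal{P}_M}|\mathcal{L}_p| = \sum_{p\in\mathcal{P}_M}\deg(p)$ using the definition of degree from Definition~\ref{general}. Combining the two counts gives the identity $\sum_{l\in\mathcal{L}_M}|l|=\sum_{p\in\mathcal{P}_M}\deg(p)$, and substituting this into the previous display produces the claimed formula.

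There is no real obstacle here: the lemma is essentially a rewriting of the definition, with the only non-trivial input being the fact that all subspaces of a paving matroid have rank exactly $n-1$, which is immediate from Definition~\ref{pav}. The proof should fit in a few lines.
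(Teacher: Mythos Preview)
Your proof is correct and follows essentially the same approach as the paper's: both use the double-counting identity $\sum_{l\in\mathcal{L}_M}|l|=\sum_{p\in\mathcal{P}_M}\deg(p)$ together with the fact that every subspace of an $n$-paving matroid has rank $n-1$, then substitute into the defining formula for $\dnaive(\Gamma_M)$. You have simply written out the intermediate algebraic simplification more explicitly than the paper does.
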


\begin{proof}
Observe that $\textstyle{\sum_{l\in \mathcal{L}_M}\size{l}=\sum_{p\in \mathcal{P}_M}\text{deg}(p)}$.
Substituting this equation into Equation~\eqref{ecuacion naive} and using the fact that every dependent hyperplane has rank $n-1$, we arrive at the desired equality.
 \end{proof}

\subsection{Relation between naive dimension and expected codimension}
In \cite{ford2015expected}, Ford introduced a recursive formula for calculating the expected dimension of matroid varieties, which are the Zariski closures of realization spaces. He also demonstrated that, for the class of positroid varieties, this formula equals the actual dimension. In this subsection, we establish that the naive dimension and expected dimension are equal for the family of elementary split matroids. 

\medskip

We will first recall the definition of expected dimensions; see \cite[Definition~1.1]{ford2015expected}.

%\begin{definition}
\begin{definition}
%[\textup{\cite{ford2015expected}, Definition~1.1}]
\normalfont
\label{exp dimension}
Let $M$ be a matroid of rank $n$ on %the ground set 
$[d]$, $\mathcal{P}([d])$ the collection of all subsets of $[d]$, and $\mathcal{G}$ a subset of  $\mathcal{P}([d])$. %collection of subsets of $[d]$. 
For each $S\in \mathcal{G}$, let $c(S)=\size{S}-\rank(S)$. Then, % and denote  $\mathcal{P}([d])$ for the collection of all subsets of $[d]$.
\begin{itemize}
    \item We recursively define
$a_{\mathcal{G}}(S)=c(S)-\sum_{\substack{T\in \mathcal{G}\\ T\subsetneq S}}a_{\mathcal{G}}(T),$ where $a_{\mathcal{G}}(\emptyset)=0$.

\item The {\em expected codimension} of $M$ with respect to $\mathcal{G}$ is defined as
\begin{equation}\label{ecg}\text{ec}_{\mathcal{G}}(M)=\sum_{S\in \mathcal{G}}(n-\rank(S))a_{\mathcal{G}(S)},\end{equation}
%This recursion defines $a$ for all $S\in \mathcal{G}$. 
and the {\em expected codimension} of $M$ is given by 
$\text{ec}(M)=\text{ec}_{\mathcal{P}([d])}(M)$.
\item The {\em expected dimension} of $M$ is defined as 
$\text{ed}(M)=nd-\text{ec}(M).$
\end{itemize}
\end{definition}

We now analyze the behavior of expected codimension under direct sums; see \cite[Proposition~3.7]{ford2015expected}.\footnote{The original proposition in \cite{ford2015expected} omits the constant summands from \eqref{correct}, stating that $\text{ec}(M\oplus N)=\text{ec}(M)+\text{ec}(N)$. This error occurs between the first and second lines in the chain of equalities in the proof of Proposition~3.7 in \cite{ford2015expected}, where the codimension is incorrectly evaluated for both $M \oplus N$ and the individual matroids $M$ and $N$, rather than consistently considering only $M \oplus N$. However, this oversight can be easily corrected to obtain the accurate formula in Equation~\ref{correct}.} 

\begin{lemma}\label{direct sum}
Let $M$ and $N$ be two matroids of ranks $n_{1}$ and $n_{2}$, on $[d_{1}]$ and $[d_{2}]$, respectively. Then, 
\begin{equation}\label{correct}
\text{ec}(M\oplus N)=\text{ec}(M)+\text{ec}(N)+n_{2}(d_{1}-n_{1})+n_{1}(d_{2}-n_{2}).\end{equation}
\end{lemma}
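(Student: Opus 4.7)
The plan is to reduce the computation to a Möbius-inversion identity. By the recursive definition of $a_{\mathcal{G}}$ with $\mathcal{G}=\mathcal{P}([d])$, the values $c(S)=\size{S}-\rank(S)$ and $a(S)$ are related by $c(S)=\sum_{T\subseteq S} a(T)$, so Möbius inversion on the Boolean lattice gives
\[
a(S)=\sum_{T\subseteq S}(-1)^{\size{S\setminus T}}c(T).
\]
I would use this closed form as the main tool, since it behaves well with respect to direct sums.

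For a subset $S\subseteq [d_{1}]\cup [d_{2}]$, write $S=S_{1}\cup S_{2}$ with $S_{i}\subseteq [d_{i}]$. Additivity of rank under direct sum gives $\rank_{M\oplus N}(S)=\rank_{M}(S_{1})+\rank_{N}(S_{2})$, and hence $c(S)=c(S_{1})+c(S_{2})$. Plugging this into the Möbius formula and using the product structure of the sum yields
\[
a_{M\oplus N}(S)=\Big(\sum_{T_{1}\subseteq S_{1}}(-1)^{\size{S_{1}\setminus T_{1}}}c(T_{1})\Big)\Big(\sum_{T_{2}\subseteq S_{2}}(-1)^{\size{S_{2}\setminus T_{2}}}\Big)+\text{symmetric term}.
\]
The alternating sum $\sum_{T_{2}\subseteq S_{2}}(-1)^{\size{S_{2}\setminus T_{2}}}$ vanishes unless $S_{2}=\emptyset$. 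Therefore $a_{M\oplus N}(S)=0$ whenever both $S_{1}$ and $S_{2}$ are nonempty, while $a_{M\oplus N}(S_{1})=a_{M}(S_{1})$ for $S\subseteq [d_{1}]$ and analogously for $S\subseteq [d_{2}]$.

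Using this vanishing, the sum~\eqref{ecg} defining $\text{ec}(M\oplus N)$ collapses to
\[
\text{ec}(M\oplus N)=\sum_{S_{1}\subseteq [d_{1}]}(n_{1}+n_{2}-\rank_{M}(S_{1}))\,a_{M}(S_{1})+\sum_{S_{2}\subseteq [d_{2}]}(n_{1}+n_{2}-\rank_{N}(S_{2}))\,a_{N}(S_{2}),
\]
where including $S_{1}=\emptyset$ and $S_{2}=\emptyset$ does no harm because $a(\emptyset)=0$. Splitting the factor $n_{1}+n_{2}-\rank$ into $(n_{i}-\rank)+n_{3-i}$, the first piece of each sum recovers $\text{ec}(M)$ and $\text{ec}(N)$, while the remaining pieces are $n_{2}\sum_{S_{1}}a_{M}(S_{1})$ and $n_{1}\sum_{S_{2}}a_{N}(S_{2})$. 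The defining identity $\sum_{T\subseteq S}a(T)=c(S)$ applied at $S=[d_{i}]$ gives $\sum_{S_{1}}a_{M}(S_{1})=d_{1}-n_{1}$ and $\sum_{S_{2}}a_{N}(S_{2})=d_{2}-n_{2}$, yielding the claimed formula.

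The only subtlety — and the reason the original proof in \cite{ford2015expected} missed the correction terms — is the careful bookkeeping of the constant contribution $n_{3-i}$ that appears when replacing $n$ by $n_{1}+n_{2}$ in \eqref{ecg}. The Möbius-inversion step is essentially routine once one recognizes that $c$ is additive across the direct sum decomposition, so the main conceptual obstacle is simply ensuring the identity $\sum_{T\subseteq S}a(T)=c(S)$ is invoked at the right level to extract $d_{i}-n_{i}$.
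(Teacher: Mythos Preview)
Your argument is correct. The M\"obius inversion identity $a(S)=\sum_{T\subseteq S}(-1)^{\size{S\setminus T}}c(T)$ follows directly from the recursive definition, and the additivity $c(S_1\cup S_2)=c(S_1)+c(S_2)$ together with the vanishing of the alternating sum over a nonempty set give exactly the vanishing $a_{M\oplus N}(S)=0$ for $S_1,S_2$ both nonempty. The final bookkeeping with $\sum_{S_i}a(S_i)=c([d_i])=d_i-n_i$ is clean.

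Regarding comparison with the paper: the paper does not supply its own proof of this lemma. It cites \cite[Proposition~3.7]{ford2015expected} and, in a footnote, points out that Ford's stated formula $\text{ec}(M\oplus N)=\text{ec}(M)+\text{ec}(N)$ is off by the correction terms, identifying the precise line in Ford's argument where the ambient rank is handled inconsistently. Ford's original route goes through his Theorem~3.6, which restricts the sum in \eqref{ecg} to subsets $S$ with both $S$ and $M/S$ connected, and then uses that the connected pieces of $M\oplus N$ lie entirely in one summand. Your approach bypasses that machinery entirely: by working with the closed M\"obius form for $a(S)$ over the full Boolean lattice, you never need the connectivity reduction, and the correction terms $n_{3-i}(d_i-n_i)$ fall out transparently from the identity $\sum_{T\subseteq [d_i]}a(T)=d_i-n_i$. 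This is a genuinely more elementary and self-contained argument than the one being cited.
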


\begin{corollary}
    If the codimensions of $\Gamma_{M}$ and $\Gamma_{N}$ equal their expected codimensions, then this also holds for $M \oplus N$.
\end{corollary}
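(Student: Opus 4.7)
The plan is to express $\text{codim}(\Gamma_{M \oplus N})$ in terms of $\text{codim}(\Gamma_{M})$ and $\text{codim}(\Gamma_{N})$, and then compare with $\text{ec}(M \oplus N)$ via Lemma \ref{direct sum}. The cleanest route passes through the matroid strata $\text{Gr}(M,\CC)$, $\text{Gr}(N,\CC)$, and $\text{Gr}(M \oplus N,\CC)$, since a direct sum of matroids produces a product of strata.

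First I would establish an isomorphism of varieties
\[\text{Gr}(M \oplus N, \CC) \;\cong\; \text{Gr}(M, \CC) \times \text{Gr}(N, \CC).\]
Fix bases $\lambda_{1} \in \mathcal{B}(M)$ and $\lambda_{2} \in \mathcal{B}(N)$, so that $\lambda_{1} \sqcup \lambda_{2}$ is a basis of $M \oplus N$. Writing elements of $\text{Gr}(M \oplus N, \CC)$ in the normalized form $(\Id_{n_{1}+n_{2}} \mid A)$ with respect to this reference basis and grouping the non-basis columns by which side they come from, the matroid axioms for a direct sum give $\rank([d_{1}]) = n_{1}$ and $\rank([d_{1}+d_{2}]) = n_{1}+n_{2}$. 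This forces the columns indexed by $[d_{1}]$ to lie in the span of the $\lambda_{1}$-columns, and the columns indexed by $\{d_{1}+1,\ldots,d_{1}+d_{2}\}$ to lie in the span of the $\lambda_{2}$-columns. Hence $A$ is block diagonal,
\[
\begin{pmatrix}
\Id_{n_{1}} & A_{1} & 0 & 0 \\
0 & 0 & \Id_{n_{2}} & A_{2}
\end{pmatrix},
\]
with $(\Id_{n_{1}} \mid A_{1}) \in \text{Gr}(M, \CC)$ and $(\Id_{n_{2}} \mid A_{2}) \in \text{Gr}(N, \CC)$. The map $A \mapsto (A_{1}, A_{2})$ is a polynomial bijection with polynomial inverse, giving the desired isomorphism and, in particular, additivity of dimensions.

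Next I would translate back to realization spaces. By Lemma \ref{igual 2} and Remark \ref{relation between varieties}, the codimensions of $\Gamma_{M}$ and $\text{Gr}(M,\CC)$ agree in their respective ambient spaces, and likewise for $N$ and $M \oplus N$. Combining this with the additivity established above and a short bookkeeping computation using the identities
\[(n_{1}+n_{2})(d_{1}+d_{2}) - n_{1}d_{1} - n_{2}d_{2} = n_{1}d_{2} + n_{2}d_{1}, \qquad (n_{1}+n_{2})^{2} - n_{1}^{2} - n_{2}^{2} = 2 n_{1} n_{2},\]
yields
\[\text{codim}(\Gamma_{M \oplus N}) = \text{codim}(\Gamma_{M}) + \text{codim}(\Gamma_{N}) + n_{1}(d_{2} - n_{2}) + n_{2}(d_{1} - n_{1}).\]
Applying the hypotheses $\text{codim}(\Gamma_{M}) = \text{ec}(M)$ and $\text{codim}(\Gamma_{N}) = \text{ec}(N)$ together with Lemma \ref{direct sum} then gives exactly $\text{codim}(\Gamma_{M \oplus N}) = \text{ec}(M \oplus N)$.

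The main (and rather mild) obstacle is carefully justifying that the block-diagonal form is genuinely forced by the direct sum condition, and that the identification with the product is an isomorphism of varieties rather than merely a bijection of points. Once this is in place, the rest is routine dimension bookkeeping combined with the already-proven formula in Lemma \ref{direct sum}.
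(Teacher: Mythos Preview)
Your proposal is correct and follows essentially the same approach as the paper: establish the product decomposition $\text{Gr}(M\oplus N,\CC)\cong \text{Gr}(M,\CC)\times \text{Gr}(N,\CC)$, derive the codimension identity $\text{codim}(\Gamma_{M\oplus N})=\text{codim}(\Gamma_{M})+\text{codim}(\Gamma_{N})+n_{1}(d_{2}-n_{2})+n_{2}(d_{1}-n_{1})$, and then compare with Lemma~\ref{direct sum}. The only difference is that you spell out the block-diagonal argument for the product decomposition, whereas the paper simply asserts it and says the codimension identity is straightforward to verify.
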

\begin{proof}
    Since $\text{Gr}(M\oplus N,\CC)\cong \text{Gr}(M,\CC)\oplus \text{Gr}(N,\CC)$, it is straightforward to verify that
\begin{equation}\label{seco}\text{codim}(\Gamma_{M\oplus N})=\text{codim}(\Gamma_{M})+\text{codim}(\Gamma_{N})+n_{2}(d_{1}-n_{1})+n_{1}(d_{2}-n_{2}).\end{equation}
Hence, the result follows by applying Equations~\eqref{correct} and~\eqref{seco}. 
\end{proof}

\begin{remark}
The formula for $\text{ec}(M)$ in \cite{ford2015expected} serves as an attempt to predict the codimension of the matroid variety $X(M)$, which is defined there as the Zariski closure of $\text{Gr}(M,\CC)$ within $\text{Gr}(n,d)$. Thus, $\text{ec}(M)$ aims to estimate $\text{codim}(X(M))$ in $\text{Gr}(n,d)$, which equals $\text{codim}(\text{Gr}(M,\CC))$. Nevertheless, using Remark~\ref{relation between varieties}, $\text{ec}(M)$ can also be interpreted as the expected codimension of $\Gamma_{M}$ within $\CC^{nd}$.
\end{remark}

The following lemma will be essential in what follows. 

\begin{lemma}\label{conectados}
Let $M$ be the connected elementary split matroid %given by a 
%collection 
of a hypergraph $\mathcal{H}=\{H_{1},\ldots,H_{q}\}$. 
Suppose for some $S\subset [d]$, % we have that:
%\begin{itemize}
 both $S$ and $M/S$ are connected, and 
 $\rank(S)<n$.
%\end{itemize}
Then, $\size{S}\leq 1$ or $S=H_{i}$ for some $i\in [q]$.
\end{lemma}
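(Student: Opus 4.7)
The plan is to dispose of $|S|\leq 1$ trivially, and then in the case $|S|\geq 2$ to first prove $S\subseteq H_{i}$ for some $i$, and finally upgrade this to $S=H_{i}$ using the connectedness of $M/S$.

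Assume $|S|\geq 2$. Since $M|S$ is connected with more than one element, every element of $S$ lies in some circuit of $M|S$, so $S$ is dependent. Any circuit $C\subseteq S$ satisfies $|C|=\rank(C)+1\leq \rank(S)+1\leq n$, so by the argument in the proof of Lemma~\ref{subspaces split}, $C$ is an $(r_{j}+1)$-subset of some $H_{j}$ and $\rank(C)=r_{j}$.

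The key combinatorial step is the uniqueness of the index $j$. Suppose $C_{1}\subseteq S\cap H_{j_{1}}$ and $C_{2}\subseteq S\cap H_{j_{2}}$ are two such circuits with $C_{1}\cap C_{2}\neq \emptyset$ and $j_{1}\neq j_{2}$. The hypergraph condition from Definition~\ref{elementary} gives $|C_{1}\cap C_{2}|\leq |H_{j_{1}}\cap H_{j_{2}}|\leq r_{j_{1}}+r_{j_{2}}-n$, and submodularity of the rank function, together with $\rank(C_{1}\cap C_{2})\leq |C_{1}\cap C_{2}|$, yields
\[
\rank(C_{1}\cup C_{2})\geq \rank(C_{1})+\rank(C_{2})-|C_{1}\cap C_{2}|\geq r_{j_{1}}+r_{j_{2}}-(r_{j_{1}}+r_{j_{2}}-n)=n,
\]
contradicting $\rank(S)<n$. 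Hence the label $j$ is constant across overlapping circuits in $M|S$. Since $M|S$ is connected, any two elements of $S$ are linked by a chain of circuits with consecutive overlaps, so a single $H_{i}$ contains every circuit of $M|S$; as every element of $S$ lies in such a circuit, $S\subseteq H_{i}$, and in particular $\rank(S)=r_{i}$.

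To conclude, suppose for contradiction that $S\subsetneq H_{i}$, and pick $e\in H_{i}\setminus S$. Then $S\cup\{e\}\subseteq H_{i}$ gives
\[
\rank_{M/S}(\{e\})=\rank_{M}(S\cup\{e\})-\rank_{M}(S)=r_{i}-r_{i}=0,
\]
so $e$ is a loop of $M/S$. A loop shares no circuit with any other element, so if $M/S$ contained another element it would be disconnected; hence $[d]\setminus S=\{e\}$, but then $\rank(M)=\rank(S)+\rank_{M/S}(\{e\})=r_{i}<n$, contradicting $\rank(M)=n$. Therefore $S=H_{i}$.

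The main obstacle is the submodularity/uniqueness step in the middle paragraph: it depends crucially on both the intersection bound $|H_{i}\cap H_{j}|\leq r_{i}+r_{j}-n$ built into the definition of an elementary split matroid and on the hypothesis $\rank(S)<n$, which keeps every circuit of $M|S$ small enough to fall under the description given by Lemma~\ref{subspaces split}.
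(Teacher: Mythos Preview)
Your proof has a genuine error in the key ``submodularity'' step. You claim that submodularity, together with $\rank(C_{1}\cap C_{2})\leq |C_{1}\cap C_{2}|$, gives
\[
\rank(C_{1}\cup C_{2})\geq \rank(C_{1})+\rank(C_{2})-|C_{1}\cap C_{2}|,
\]
but submodularity of the rank function is the inequality $\rank(A\cup B)+\rank(A\cap B)\leq \rank(A)+\rank(B)$, which yields an \emph{upper} bound on $\rank(C_{1}\cup C_{2})$, not a lower one. The displayed inequality is false in general: in $U_{2,5}$ take circuits $C_{1}=\{1,2,3\}$ and $C_{2}=\{3,4,5\}$; then $\rank(C_{1})+\rank(C_{2})-|C_{1}\cap C_{2}|=3>2=\rank(C_{1}\cup C_{2})$.

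The intended conclusion is nonetheless correct, and the fix is short. As noted in the proof of Lemma~\ref{subspaces split}, each $H_{j}$ is a flat and $\overline{C_{k}}=H_{j_{k}}$; hence $\rank(C_{1}\cup C_{2})=\rank(H_{j_{1}}\cup H_{j_{2}})$, and a direct check with the rank formula of Definition~\ref{elementary} (this is exactly the computation carried out later in the proof of Lemma~\ref{split is simple}) gives $\rank(H_{j_{1}}\cup H_{j_{2}})=n$ whenever $j_{1}\neq j_{2}$. This does not even require $C_{1}\cap C_{2}\neq\emptyset$, so the chaining argument becomes unnecessary: all circuits of $M|S$ lie in a single $H_{i}$ immediately.

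With that correction your overall route is valid but genuinely different from the paper's. The paper first uses the loops-in-$M/S$ observation to force $H_{i}\subseteq S$ (or $|S|=d-1$), and then invokes the external structural fact $M/H_{i}\cong U_{n-r_{i},\,d-|H_{i}|}$ to see that the points of $S\setminus H_{i}$ are coloops of $M|S$, contradicting its connectedness. You go in the opposite direction: first establish $S\subseteq H_{i}$ via the circuit analysis, then use loops in $M/S$ to rule out $S\subsetneq H_{i}$. Your route is more self-contained (no appeal to the $M/H_{i}$ structure), once the submodularity step is replaced by the closure argument above.
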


\begin{proof}
We consider the following cases:

\medskip
{\bf Case~1.} Suppose that $\size{S\cap H_{i}}\leq r_{i}$ for all $i$. By definition, this implies that $\rank(S)=\text{min}\{n,\size{S}\}$. Given that $\rank(S)<n$, we conclude that $\rank(S)=\size{S}$. Since $S$ is connected, it follows that $\size{S}\leq 1$.

\medskip
{\bf Case~2.} Suppose that $\size{S\cap H_{i}}> r_{i}$ for some $i\in [q]$. In this case, since the closure of any $(r_{i}+1)$-subset of $H_{i}$ coincides with $H_{i}$, it follows that all the points in $H_{i}\backslash S$ become loops in $M/S$. Since $M/S$ is connected, this situation leads to a contradiction unless $\size{S}=d-1$ or $H_{i}\subset S$. We will now analyze these possibilities.

\medskip
{\bf Case~2.1.} If $\size{S}=d-1$, then we have $\rank(S)=n$ since $M$ is connected.

\medskip
{\bf Case~2.2.} If $H_{i}\subset S$, by applying \textup{\cite[Theorem~10]{berczi2023hypergraph}}, we obtain that $M/H_{i}\cong U_{n-r_{i},d-\size{H_{i}}}$. This implies that $\rank(S)=r_{i}+\min\{n-r_{i},\size{S\backslash H_{i}}\}$.
Given that $\rank(S)<n$, it follows that $\size{S\backslash H_{i}}<n-r_{i}$. Consequently, we have $\rank(S)=r_{i}+\size{S\backslash H_{i}}=\rank(H_{i})+\size{S\backslash H_{i}}$, indicating that the points in $S\backslash H_{i}$ are coloops in $S$. Since $S$ is connected, we conclude that $S=H_{i}$.
\end{proof}

We now show that, for the family of elementary split matroids, the expected dimension and the naive dimension are equal.

\begin{theorem}\label{teo coincid}
Let $M$ be an elementary split matroid of a %non-redundant 
hypergraph $\mathcal{H}=\{H_{1},\ldots,H_{q}\}$. %as described in Definition~\ref{elementary}. 
Then,
\[\dnaive(\Gamma_{M})=nd-\textup{ec}(M)=nd-\sum_{i=1}^{q}(n-r_{i})(\size{H_{i}}-r_{i}).\]
\end{theorem}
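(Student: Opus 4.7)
The first equality $\dnaive(\Gamma_M) = nd - \sum_{i=1}^{q}(n-r_i)(|H_i|-r_i)$ has already been established in Lemma~\ref{lema split}, so the task is to prove
\[
\textup{ec}(M) = \sum_{i=1}^{q}(n-r_i)(|H_i|-r_i).
\]
I would first reduce to the connected case. Since both sides of this identity behave additively under direct sum decompositions of $M$ (the left side by Lemma~\ref{direct sum}, the right side because the hypergraph $\mathcal{H}$ splits according to connected components), it suffices to assume $M$ is a connected elementary split matroid.

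The central idea is to identify which subsets $S \subseteq [d]$ contribute nontrivially to the defining sum $\textup{ec}(M) = \sum_{S}(n-\rank(S))\,a_{\mathcal{P}([d])}(S)$. First, only subsets with $\rank(S) < n$ can contribute. Next, I would show that $a_{\mathcal{P}([d])}(S) = 0$ whenever either $M|S$ or $M/S$ is disconnected; this is a Möbius-inversion style fact that follows from the identity $c(S) = \sum_{T \subseteq S}a(T)$ together with the additivity of $c(\cdot) = |\cdot| - \rank(\cdot)$ across direct summands. Combining this reduction with Lemma~\ref{conectados}, the only surviving contributors are the empty set, the singletons $\{p\}$, and the hypergraph edges $H_1,\ldots,H_q$.

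Finally, I would evaluate $a_{\mathcal{P}([d])}$ on each surviving set. We have $a(\emptyset) = 0$ by definition; for any singleton $\{p\}$ (which is not a loop since $M$ is connected), $c(\{p\}) = 1 - 1 = 0$ and hence $a(\{p\}) = 0$; and for each $H_i$, all proper subsets in the recursion have $a = 0$, so
\[
a(H_i) = c(H_i) = |H_i| - r_i.
\]
Substituting these into the expected codimension formula and using $\rank(H_i) = r_i$ produces
\[
\textup{ec}(M) = \sum_{i=1}^{q}(n-r_i)(|H_i|-r_i),
\]
as desired.

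The main obstacle is the reduction step: establishing that $a_{\mathcal{P}([d])}(S) = 0$ for subsets $S$ that are not \textit{doubly connected} (i.e.\ both $M|S$ and $M/S$ connected). Assuming Ford's framework in \cite{ford2015expected} (or a direct verification via Möbius inversion exploiting the separability of the rank function across direct sums), this reduction, combined with the rigid structural constraint provided by Lemma~\ref{conectados}, pins down the relevant subsets to exactly $\{\emptyset\} \cup \{\{p\} : p \in [d]\} \cup \{H_1,\ldots,H_q\}$, after which the arithmetic is immediate.
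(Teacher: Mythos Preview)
Your high-level strategy matches the paper's: invoke Lemma~\ref{conectados} to identify the doubly connected subsets of rank less than $n$, then evaluate directly. However, two steps do not go through as written.

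The reduction to the connected case is not correct: Lemma~\ref{direct sum} shows that $\textup{ec}$ is \emph{not} simply additive under direct sums (there are cross terms $n_2(d_1-n_1)+n_1(d_2-n_2)$), and the hypergraph does not split along components either. A disconnected elementary split matroid is, by \cite[Theorem~11]{berczi2023hypergraph}, of the form $U_{n_1,d_1}\oplus U_{n_2,d_2}$; its hypergraph consists of the two blocks, while each uniform summand individually has empty hypergraph. The paper treats this case by a short direct computation rather than by additivity. More seriously, your central claim that $a_{\mathcal{P}([d])}(S)=0$ whenever $M/S$ is disconnected is false. The M\"obius-inversion argument you sketch does establish vanishing when $M|S$ is disconnected, but it says nothing about $M/S$. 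Concretely, take $n=3$, $\mathcal{H}=\{H_1\}$ with $H_1=\{1,2,3,4\}$ and $r_1=2$, and set $T=\{1,2,3\}$: every proper subset of $T$ is independent, so $a_{\mathcal{P}([d])}(T)=c(T)=1$, yet the remaining point $4\in H_1$ becomes a loop in $M/T$, making $M/T$ disconnected. Hence your assertion that ``all proper subsets of $H_i$ have $a=0$'' fails, and indeed here $a_{\mathcal{P}([d])}(H_1)=2-4=-2\ne |H_1|-r_1$. What Ford's Theorem~3.6 actually provides is the identity $\textup{ec}(M)=\textup{ec}_{\mathcal{G}}(M)$: one may replace the full power set by the family $\mathcal{G}$ of doubly connected subsets \emph{in the recursion itself}. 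The paper then computes $a_{\mathcal{G}}(H_i)=|H_i|-r_i$, which is correct because within $\mathcal{G}$ the only proper subsets of $H_i$ are $\emptyset$ and singletons. The distinction between $a_{\mathcal{G}}$ and $a_{\mathcal{P}([d])}$ is precisely what your argument elides.
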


\begin{proof}
We will prove the statement by considering the following cases:

\medskip
{\bf Case~1.} Suppose $M$ is connected. By applying \textup{\cite[Theorem~3.6]{ford2015expected}}, we know that $\text{ec}(M)=\text{ec}_{\mathcal{G}}(M)$, where $\mathcal{G}$ is the collection of all subsets of $[d]$ such that both $S$ and $M/S$ are connected. Using Lemma~\ref{conectados}, we can express $\mathcal{G}$ as 
\[\mathcal{G}=\mathcal{H}\cup \mathcal{A} \cup \mathcal{B},\quad\text{where}\quad\mathcal{A}=\{S\subset [d]:\size{S}\leq 1\}, \quad \text{and} \quad \mathcal{B}=\{S\in \mathcal{G}:\rank(S)=n\}.\]
From Equation~\eqref{ecg}, it follows that the subsets of $\mathcal{G}$ of rank $n$ do not contribute to $\text{ec}_{\mathcal{G}}(M)$. Therefore, % we have
 $\text{ec}_{\mathcal{G}}(M)=\text{ec}_{\mathcal{G}\backslash \mathcal{B}}(M).$ 
 Moreover, since $a_{\mathcal{G}\backslash \mathcal{B}}(S)=0$ for every $S\in \mathcal{A}$, by \textup{\cite[Corollary~3.3]{ford2015expected}} we have that $\text{ec}_{\mathcal{G}\backslash \mathcal{B}}(M)=\text{ec}_{\mathcal{H}}(M)$. Thus, we conclude
$\textstyle{\text{ec}(M)=\text{ec}_{\mathcal{H}}(M)=\sum_{i=1}^{q}(n-r_{i})(\size{H_{i}}-r_{i})}.$
Finally, the result follows from Lemma~\ref{lema split}.

\medskip
{\bf Case~2.} Suppose $M$ is disconnected. According to \textup{\cite[Theorem~11]{berczi2023hypergraph}}, we know that $M$ is the direct sum of two uniform matroids, i.e.,  $M=U_{n_{1},d_{1}}\oplus U_{n_{2},d_{2}}$. From Equation~\eqref{ecuacion naive}, it follows that
\[\dnaive(\Gamma_{M})=(d_{1}+d_{2})(n_{1}+n_{2})-n_{2}(d_{1}-n_{1})-n_{1}(d_{2}-n_{2}).\]
Moreover, it is easy to verify that $\text{ec}(U_{n_{1},d_{1}})=\text{ec}(U_{n_{2},d_{2}})=0$. Then, by Equation~\ref{correct}, we have that \begin{equation}\label{equi}\text{ec}(U_{n_{1},d_{1}}\oplus U_{n_{2},d_{2}})=n_{2}(d_{1}-n_{1})+n_{1}(d_{2}-n_{2}),\end{equation} which completes the proof. %The result then follows from Equation~\eqref{equi}.
\end{proof}

\section{Inductively connected matroids}\label{sec 4}
In this section, we introduce the family of inductively connected matroids of arbitrary rank. The main result, presented in Theorem~\ref{gen}, establishes the irreducibility and smoothness of the realization spaces for these matroids, specifically demonstrating their connectivity. Additionally, we provide an explicit characterization of the realization space of an inductively connected matroid as an open Zariski subset of a complex affine space. This characterization generalizes the result in \textup{\cite[Proposition~3.3]{nazir2012connectivity}}, which applies to line arrangements, or equivalently matroids of rank 3.

\subsection{Type of an ordering} 
In this subsection, we introduce the notion of the type of an ordering within a matroid's ground set. This concept allows us to establish an upper bound on the naive dimension of a matroid and to calculate the naive dimension specifically for elementary split matroids.

\medskip

\begin{definition}\normalfont\label{def ordering}
Let $M$ be a matroid with $\size{\mathcal{P}_M}=d$. An \textit{ordering} of $M$ is a bijective map
\[w:\mathcal{P}
_M\rightarrow \{1,\ldots,d\}.\]
We will often denote this ordering as $(p_{1},\ldots,p_{d})$, where $w(p_{j})=j$ for all $j\in [d]$, meaning the indices align with the ordering $w$.
\end{definition}

Any ordering $w$ naturally induces an ordering $w_{N}$ on any subset $N\subset \mathcal{P}_M$, where for all $p,q\in N$,
\[w_{N}(p)<w_{N}(q)\Longleftrightarrow w(p)<w(q).\]

\begin{definition}\label{defi f}\normalfont
Let $M$ be a matroid of rank $n$ with $\size{\mathcal{P}_M}=d$. 
\begin{itemize}
\item For any ordering $w$ and any $i\in [d]$, let 
$\mathcal{P}_{w}[i]=w^{-1}(\{1,\ldots,i\}).$

\item We also denote by $M_{w}[i]$ the submatroid of $M$ on the ground set $\mathcal{P}_{w}[i]$ and let $\mathcal{L}_{w}[i]$ be the set of its subspaces. When $w$ is given by an ordering of the points, we have $\mathcal{P}_{w}[i]=\{p_{1},\ldots,p_{i}\}$. We denote ${(\mathcal{L}_{w}[i])}_{p_{i}}$ for the set of subspaces of $M_{w}[i]$ containing the point $p_{i}$.  

\item We define the $i^{\text{th}}$ type of an ordering as follows:
\begin{equation}\label{equ tau}
\tau_{i}(M,w)=\sum_{l\in {(\mathcal{L}_{w}[i])}_{p_{i}}}\rank(l) -n(\lvert{{(\mathcal{L}_{w}[i])}_{p_{i}}}\rvert-1).
\end{equation}
For simplicity, when the context is clear, we simply denote this number as $\tau_{i}$.

\item The \textit{type} of an ordering $(M,w)$ is the tuple 
$\tau(M,w)=(\tau_{1},\ldots,\tau_{d})\in \mathbb{Z}^{d}$.
\end{itemize}
\end{definition}

\begin{example} 
%\phantom{ru}
For an $n$-paving matroid $M$, we have $\tau_{i}=n-\text{deg}(p_{i})$, where $\text{deg}(p_{i})$ is the number of dependent hyperplanes of $M_{w}[i]$ containing the point $p_{i}$.
%\item[{\rm (ii)}] 
\medskip

For instance, let $M$ be the $4$-paving matroid on the ground set $[7]$ with dependent hyperplanes: \[\mathcal{L}_M=\{\{1,2,3,4\},\{1,2,5,6\},\{3,4,5,6\},\{1,3,5,7\},\{2,4,6,7\}\}.\]
For the ordering $w=(1,2,3,4,5,6,7)$, the type of $(M,w)$ is $\tau(M,w)=(4,4,4,3,4,2,2)$.
\end{example}

The next proposition establishes the connection between the type of an ordering and the naive dimension of the realization space.

\begin{proposition}\label{igual 3}
Let $M$ be a matroid of rank $n$ on $[d]$, and let $w=(p_{1},\ldots,p_{d})$ be an ordering of $M$ such that $\{p_{1},\ldots,p_{n}\}$ forms a basis. Then, the following inequality holds:
\begin{equation}\label{equ na}
\dnaive(\Gamma_{M})\leq \sum_{i=1}^{d}\tau_{i}. 
\end{equation}
Moreover, if $M$ is elementary split, then the equality holds.
\end{proposition}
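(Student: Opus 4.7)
The plan is to reformulate both sides in terms of a common sum, then introduce a parent map $\Phi$ that identifies the subspaces of the submatroids $M_w[i]$ with subspaces of $M$, and to bound (respectively, compute) its fibers.

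I would start by rewriting the defining formula of $\tau_i$ in Definition~\ref{defi f} as
\[\tau_i \;=\; n + \sum_{l' \in (\mathcal{L}_w[i])_{p_i}} (\rank(l') - n),\]
so that
\[\sum_{i=1}^{d} \tau_i \;=\; nd - \sum_{i=1}^{d}\sum_{l' \in (\mathcal{L}_w[i])_{p_i}} (n - \rank(l')).\]
In view of Definition~\ref{naiv defi}, the inequality~\eqref{equ na} is then equivalent to
\[\sum_{i=1}^{d}\sum_{l' \in (\mathcal{L}_w[i])_{p_i}} (n - \rank(l')) \;\leq\; \sum_{l \in \mathcal{L}_M}(\size{l} - \rank(l))(n - \rank(l)),\]
with equality in the elementary split case. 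Note that $\mathcal{L}_w[i]=\emptyset$ for $i\leq n$, since $\{p_1,\ldots,p_n\}$ is a basis.

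Next, I would assign to each pair $(l',i)$ with $l' \in (\mathcal{L}_w[i])_{p_i}$ the unique subspace $\Phi(l',i) = l \in \mathcal{L}_M$ whose equivalence class contains the circuits of $l'$. Well-definedness, and injectivity in $l'$ for fixed $i$, both rest on the same observation: circuits of $M_w[i]$ are precisely circuits of $M$ contained in $\mathcal{P}_w[i]$, and any two rank-$r$ circuits $C_1,C_2$ of $M$ whose $M$-closures share the same rank-$r$ intersection with $\mathcal{P}_w[i]$ must satisfy $\overline{C_1}^M=\overline{C_2}^M$, since both closure-flats have rank $r$ and contain that common rank-$r$ subset. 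To bound $\abs{\Phi^{-1}(l)}$ for $l$ of rank $r$ and size $s$, I list the points of $l$ in $w$-order as $q_1,\ldots,q_s$ with $q_k=p_{j_k}$, and observe that for $(l',i) \in \Phi^{-1}(l)$ the point $p_i$ is a point of $l'$, hence sits in a circuit $C$ of the class $l$ with $C \subset \mathcal{P}_w[i]$; writing $p_i = q_k$, this circuit has $r+1$ points with indices at most $j_k$, forcing $k \geq r+1$. Therefore $\abs{\Phi^{-1}(l)} \leq s - r$, and grouping the left-hand sum by parent, using $\rank(l')=\rank(l)$, yields the desired inequality.

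For equality in the elementary split case, Lemma~\ref{subspaces split} together with \cite[Theorem~10]{berczi2023hypergraph} tells us that $\mathcal{L}_M=\mathcal{H}$ and that each $l = H_j$ is a flat of rank $r_j$ in which every $(r_j+1)$-subset is a circuit. Thus for every $k \geq r+1$, the set $\{q_1,\ldots,q_r,q_k\}$ is a circuit of the class $l$ sitting inside $\mathcal{P}_w[j_k]$ and passing through $p_{j_k}$, so the subspace $l\cap\mathcal{P}_w[j_k]$ of $M_w[j_k]$ lies in $\Phi^{-1}(l)$, giving $\abs{\Phi^{-1}(l)} = s - r$ and upgrading the inequality to equality. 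The most delicate point throughout is the bookkeeping that distinguishes the subspace $l$ as a union of circuits, its underlying flat in $M$, and the restriction $l \cap \mathcal{P}_w[i]$; these notions can differ for general matroids but coincide for elementary split ones, which is precisely what makes the fiber bound tight.
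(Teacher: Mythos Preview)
Your argument is correct and takes a genuinely different route from the paper's. The paper proceeds by induction on $d$: it deletes the last point $p_d$, applies the inductive hypothesis to $M' = M_w[d-1]$, and then tracks how subspaces of $M'$ relate to those of $M$ via a case analysis (partitioning $\mathcal{L}_{M'}$ into sets $\mathcal{A}_1, \mathcal{A}_2$ and $\mathcal{L}_M$ into $\mathcal{B}_1, \mathcal{B}_2, \mathcal{B}_3$), obtaining a chain of three inequalities that all collapse to equalities in the elementary split case. Your proof is instead a single global double-count: you collect all pairs $(l',i)$ with $l'\in(\mathcal{L}_w[i])_{p_i}$, push them forward along the parent map $\Phi$ to $\mathcal{L}_M$, and bound each fiber by $|l|-\rank(l)$, exhibiting exactly that many preimages in the elementary split case via the explicit circuits $\{q_1,\ldots,q_r,q_k\}$. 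This bypasses both the induction and the three-way case split, and makes transparent where slack can occur for general matroids (a subspace in the paper's class $\mathcal{B}_3$ corresponds, after unrolling the induction, to a parent whose fiber is strictly smaller than $|l|-\rank(l)$). The paper's inductive setup, for its part, mirrors the recursive structure exploited later in Theorem~\ref{gen} and Procedure~\ref{procedure}, so it integrates naturally with the rest of the argument there.
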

\begin{proof}
We proceed by induction on $d$. If $d=n$, the equality holds trivially. Now, suppose that $M$ has $d$ points and the result is true for any matroid with at most $d-1$ points. Let $M\rq$ be the submatroid with points $(p_{1},\ldots,p_{d-1})$. 
Note that any subspace $l\in \mathcal{L}_{M\rq}$ uniquely corresponds to a subspace $\widetilde{l}\in \mathcal{L}_{M}$.

\medskip
There are two possible cases for a subspace $l\in \mathcal{L}_{M\rq}$:

\medskip
{\bf Case~1.} There is no circuit $C$ containing $p_{d}$ such that $\closure{C}=\closure{l}$, that is the closure of $C$ and $l$ are equal. In this case $\widetilde{l}=l$. We denote the set of subspaces of this type by $\mathcal{A}_{1}$.

\medskip
{\bf Case~2.} There exists a circuit $C$ containing $p_{d}$ such that $\closure{C}=\closure{l}$. In this case, the corresponding subspace $\widetilde{l}\in \mathcal{L}_{M}$ contains $p_{d}$, and $\lvert\widetilde{l}\rvert \geq \size{l}+1$. We denote by $\mathcal{A}_{2}$ the set of subspaces of this type.

\medskip
There are three possible cases for a subspace $l\rq \in \mathcal{L}_{M}$:

\medskip
{\bf Case~1.} $l\rq \not \in \mathcal{L}_{p_{d}}$. In this case, there exists $l\in \mathcal{A}_{1}$ with $\widetilde{l}=l\rq$. We denote the set of subspaces of this type by $\mathcal{B}_{1}$.

\medskip
{\bf Case~2.} $l\rq \in \mathcal{L}_{p_{d}}$ and there exists a circuit $C$ not containing $p_{d}$ such that $\closure{C}=\closure{l\rq }$. In this case, there exists $l\in \mathcal{A}_{2}$ with $\widetilde{l}=l\rq$. We denote the set of subspaces of this type by $\mathcal{B}_{2}$.

\medskip
{\bf Case~3.} $l\rq \in \mathcal{L}_{p_{d}}$ and there is no circuit $C$ not containing $p_{d}$ such that $\closure{C}=\closure{l\rq }$. We denote the set of subspaces of this type by $\mathcal{B}_{3}$.

\medskip
By the induction hypothesis and Equation~\eqref{equ tau}, we have:
\begin{equation}\label{rec}
\begin{aligned}
\sum_{i=1}^{d}\tau_{i}&\geq n(d-1)-\sum_{l\in \mathcal{L}_{M\rq}}(\size{l}-\rank(l))(n-\rank(l))+(n-\sum_{l\in \mathcal{L}_{p_{d}}}n-\rank(l))\\
&=nd-\sum_{l\in \mathcal{A}_{2}}(\size{l}-\rank(l))(n-\rank(l))-\sum_{l\in \mathcal{A}_{1}}(\size{l}-\rank(l))(n-\rank(l))-\sum_{l\in \mathcal{B}_{2}\cup \mathcal{B}_{3}}n-\rank(l)\\
&\geq nd-\sum_{l\in \mathcal{A}_{2}}(\lvert \widetilde{l}\rvert -\rank(l)-1)(n-\rank(l))-\sum_{l\in \mathcal{B}_{1}}(\size{l}-\rank(l))(n-\rank(l))-\sum_{l\in \mathcal{B}_{2}\cup \mathcal{B}_{3}}n-\rank(l)\\
%&= nd-\sum_{l\in \mathcal{B}_{2}}(\lvert l\rvert -\rank(l)-1)(n-\rank(l))-\sum_{l\in \mathcal{B}_{1}}(\size{l}-\rank(l))(n-\rank(l))-\sum_{l\in \mathcal{B}_{2}\cup \mathcal{B}_{3}}n-\rank(l)\\
%&= nd-\sum_{l\in \mathcal{B}_{2}}(\lvert l\rvert -\rank(l))(n-\rank(l))-\sum_{l\in \mathcal{B}_{1}}(\size{l}-\rank(l))(n-\rank(l))-\sum_{l\in \mathcal{B}_{3}}n-\rank(l)\\
&\geq  nd-\sum_{l\in \mathcal{B}_{2}}(\lvert l\rvert -\rank(l))(n-\rank(l))-\sum_{l\in \mathcal{B}_{1}}(\size{l}-\rank(l))(n-\rank(l))-\sum_{l\in \mathcal{B}_{3}}(\size{l}-\rank(l))(n-\rank(l))\\
&= nd-\sum_{l\in \mathcal{L}_{M}}(\size{l}-\rank(l))(n-\rank(l)).
\end{aligned}
%\tag*{\(\ast\)}
\end{equation}
Note that the second inequality above holds because the size of each subspace in $\mathcal{A}_{2}$ increases by at least one when adding the point $p_{d}$, and the third inequality follows from $\size{l}\geq \rank(l)+1$ for each subspace $l\in \mathcal{B}_{3}$. This completes the inductive step.

Now, suppose that $M$ is an elementary split matroid of a hypergraph $\mathcal{H}=\{H_{1},\ldots,H_{q}\}$, as described in Definition~\ref{elementary}. We now introduce the following partition of the hypergraph:
\begin{equation*}
\begin{aligned}
&\mathcal{H}_{1}=\{H_{i}: p_{d}\not \in H_{i}\}, \ \mathcal{H}_{2}=\{H_{i}: p_{d}\in H_{i}\ \text{and $\size{H_{i}}\geq r_{i}+2$}\},\ 
\mathcal{H}_{3}=\{H_{i}:p_{d}\in H_{i}\ \text{and $\size{H_{i}}= r_{i}+1$}\}.
 \end{aligned}
 \end{equation*}
By \textup{\cite[Theorem~8]{berczi2023hypergraph}}, the matroid $M\rq$ is also elementary split, defined by the hypergraph \[\mathcal{H}\rq=\{H_{i}\backslash \{p_{d}\}: \size{H_{i}\backslash \{p_{d}\}}\geq r_{i}+1\}=\mathcal{H}_{1}\cup \{H_{i}\backslash \{p_{d}\}:H_{i}\in \mathcal{H}_{2}\}.\]
Additionally, by Lemma~\ref{subspaces split}, 
$\mathcal{L}_{M}=\mathcal{H}$ and $\mathcal{L}_{M\rq}=\mathcal{H}\rq$.
By induction hypothesis and \eqref{equ tau}, we have 
\begin{equation*}
\begin{aligned}
\sum_{i=1}^{d}\tau_{i}&= n(d-1)-\sum_{l\in \mathcal{H}\rq}(\size{l}-\rank(l))(n-\rank(l))+(n-\sum_{H_{i}\in \mathcal{H}_{2}\cup \mathcal{H}_{3}}n-r_{i})\\
%&= nd-\sum_{H_{i}\in \mathcal{H}_{1}}(\size{H_{i}}-r_{i})(n-r_{i})-\sum_{H_{i}\in \mathcal{H}_{2}}(\size{H_{i}}-r_{i}-1)(n-r_{i})-\sum_{H_{i}\in \mathcal{H}_{2}\cup \mathcal{H}_{3}}n-r_{i}\\
%&= nd-\sum_{H_{i}\in \mathcal{H}_{1}}(\size{H_{i}}-r_{i})(n-r_{i})-\sum_{H_{i}\in \mathcal{H}_{2}}(\size{H_{i}}-r_{i})(n-r_{i})-\sum_{H_{i}\in \mathcal{H}_{3}}n-r_{i}\\
&= nd-\sum_{H_{i}\in \mathcal{H}_{1}}(\size{H_{i}}-r_{i})(n-r_{i})-\sum_{H_{i}\in \mathcal{H}_{2}}(\size{H_{i}}-r_{i})(n-r_{i})-\sum_{H_{i}\in \mathcal{H}_{3}}(\size{H_{i}}-r_{i})(n-r_{i})\\
&= nd-\sum_{H_{i}\in \mathcal{H}}(\size{H_{i}}-r_{i})(n-r_{i}),
\end{aligned}
\end{equation*}
where the penultimate equality holds since $\size{H_{i}}=r_{i}+1$ for each $H_{i}\in \mathcal{H}_{3}$.
Consequently, the equality holds at each step of the induction.

For elementary split matroids, the sets $\mathcal{B}_{i}$ correspond precisely to the sets $\mathcal{H}_{i}$ for $i\in [3]$, while the sets $\mathcal{A}_{1}$ and $\mathcal{A}_{2}$ coincide with $\mathcal{H}_{1}$ and $\{H_{i}\backslash \{p_{d}\}:H_{i}\in \mathcal{H}_{2}\}$, respectively. The reason why the equality holds for elementary split matroids, but not in general, lies in the fact that the size of each subspace $H_{i}\backslash \{p_{d}\}$ for $H_{i}\in \mathcal{H}_{2}$ increases by exactly one when adding $p_{d}$, and that for every $H_{i}\in \mathcal{H}_{3}$, the size of $H_{i}$ is exactly $r_{i}+1$, which causes the inequalities in Equation~\eqref{rec} to become equalities.
\end{proof}

\subsection{Inductively connected matroids}

In this subsection, we introduce the family of inductively connected matroids and establish some of their fundamental properties.

\begin{definition}\normalfont\label{def simp}
Let $M$ be a matroid of rank $n$ on $[d]$. We say that $M$ is \textit{inductively connected} if there exists an ordering $w=(p_{1},\ldots,p_{d})$ satisfying the following conditions:
\begin{equation}\label{neweq}
\text{$\{p_{1},\ldots,p_{n}\}$ is a basis of $M$ and  $\lvert{(\mathcal{L}_{w}[i])_{p_{i}}}\rvert\leq 2$ for each $i$, with $n<i\leq d$.}
\end{equation}
A hyperplane arrangement is {\em inductively connected} if its associated matroid is inductively connected.
\end{definition}

%We will now introduce the definition of $2$-\textit{simple matroids}, a subfamily of the inductively connected matroids.

\begin{example}
Consider the matroid $M$ of rank $4$, defined over the ground set $[12]$, with subspaces: % (which in this case are the circuits of size $3$ or $4$):
\begin{align*}
\mathcal{L}_M=\{\{1,2,3,&4\},\{3,4,5,6\},\{5,6,7,8\},\{7,8,9,10\},\{9,10,11,12\}
,\\
&\{11,12,1,2\},\{1,5,9\},\{2,6,10\},\{3,7,11\}\}.
\end{align*}
It is straightforward to verify that the ordering $w=(1,2,3,5,6,7,9,10,11,4,8,12)$ satisfies the conditions of Equation~\eqref{neweq}, implying that $M$ is inductively connected.
\end{example}

We now characterize inductively connected matroids within the class~of~paving~matroids.

\begin{lemma}
For a paving matroid $M$, let $Q_{M}=\{p\in \mathcal{P}_{M}:\size{\mathcal{L}_{p}}>2\}$.
If $M$ is an $n$-paving matroid, then $M$ is inductively connected if and only if there is no submatroid $\emptyset \subsetneq N \subset M$ satisfying $Q_{N}=N$.%, where $Q_{N}=\set{p\in \mathcal{P}_{N}:\size{\mathcal{L}_{p}}>2}$.
\end{lemma}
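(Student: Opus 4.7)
I prove both implications, with the main work lying in the converse.

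$(\Rightarrow)$ Assume $M$ is inductively connected via an ordering $w=(p_{1},\ldots,p_{d})$ and let $\emptyset\subsetneq N=M|S$ be a submatroid. To show $Q_{N}\neq N$, I pick $p_{i}$ to be the $w$-maximal element of $S$ and argue that $p_{i}\notin Q_{N}$. If $\rank(M|S)<n$ then $\mathcal{L}_{M|S}=\emptyset$, because circuits of $M|S$ are inherited from the $n$-paving matroid $M$ and therefore have size $n$ or $n+1>\rank(M|S)$. If $\rank(M|S)=n$ and $i=n$, then $|S|\geq n$ forces $S=\{p_{1},\ldots,p_{n}\}$, a basis of $M$, so again $\mathcal{L}_{M|S}=\emptyset$. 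This reduces the argument to $\rank(M|S)=n$ with $i>n$. In this main case, I will construct an injection from subspaces of $M|S$ through $p_{i}$ into ${(\mathcal{L}_{w}[i])}_{p_{i}}$, sending a subspace $H\cap S$ to $H\cap \mathcal{P}_{w}[i]$, where $H$ is the unique dependent hyperplane of $M$ giving rise to it. Injectivity rests on the fact that two distinct dependent hyperplanes of a rank-$n$ paving matroid intersect in at most $n-1$ elements (an $n$-subset of their intersection would be a shared circuit forcing equal closures), while $H\cap \mathcal{P}_{w}[i]$ has size at least $n$. Combined with the inductive hypothesis, this yields $|\mathcal{L}_{p_{i}}(M|S)|\leq|{(\mathcal{L}_{w}[i])}_{p_{i}}|\leq 2$.

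$(\Leftarrow)$ Assume no nonempty submatroid $N$ of $M$ satisfies $Q_{N}=N$. I construct the ordering $(p_{1},\ldots,p_{d})$ from the back by a greedy procedure. Set $S_{d}=\mathcal{P}_{M}$; while $|S_{k}|>n$, select $p_{k}\in S_{k}$ that is (i) not a coloop of $M|S_{k}$ and (ii) satisfies $|\mathcal{L}_{p_{k}}(M|S_{k})|\leq 2$, and then set $S_{k-1}=S_{k}\setminus\{p_{k}\}$. Condition (i) keeps $\rank(M|S_{k-1})=n$, so when the loop exits, the remaining set $\{p_{1},\ldots,p_{n}\}$ is a basis of $M$, which makes the resulting ordering an inductively connected one. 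When $M|S$ has no coloops, (i) is vacuous, and the hypothesis directly supplies a point satisfying (ii), since otherwise every point of $S$ would lie in more than two subspaces, giving $Q_{M|S}=S$ and contradicting the hypothesis.

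The main obstacle is the case in which $M|S$ has coloops, as deleting non-coloops of $M$ may create new coloops in intermediate submatroids. I plan to handle this via the direct-sum decomposition $M|S=C'\oplus M''$, where $C'$ is the set of coloops of $M|S$ and $M''$ is the coloop-free part of rank $n-|C'|$. A rank count juxtaposes two constraints: circuits of $M|S$ inherited from $M$ have size $n$ or $n+1$, while a circuit contained in $M''$ has size at most $\rank(M'')+1=n-|C'|+1$. A brief case analysis then shows that if $|C'|\geq 2$ then $M''$ has no circuits, so $M|S$ is free and $|S|=n$, contradicting $|S|>n$; whereas if $|C'|=1$, then $M''$ is the uniform matroid $U_{n-1,|M''|}$ and $M|S$ possesses a single subspace, namely $M''$, containing every non-coloop, so any non-coloop trivially satisfies both (i) and (ii). This rules out the stuck case and completes the construction.
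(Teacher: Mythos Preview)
Your proof is correct and follows the same overall strategy as the paper: for $(\Rightarrow)$ take the $w$-maximal element of $N$ and bound its degree in $N$ by its degree in $M_{w}[i]$; for $(\Leftarrow)$ build the ordering greedily from the back, at each step choosing a point lying in at most two subspaces of the current restriction.

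The difference is one of care rather than method. For $(\Rightarrow)$ the paper simply asserts the inclusion $Q_{N}\subset Q_{M_{w}[i]}$, whereas you spell out the underlying injection $H\cap S\mapsto H\cap\mathcal{P}_{w}[i]$ via unique dependent hyperplanes of $M$ and justify injectivity through the $\leq n-1$ bound on pairwise intersections of hyperplanes. For $(\Leftarrow)$ the paper's greedy only enforces $p_{i}\notin Q_{N_{i}}$ and does not explicitly verify that the remaining $n$ points form a basis; your additional non-coloop requirement and the accompanying case analysis (showing that with $\rank(M|S)=n$ and $|S|>n$ one can have at most one coloop, and in that case the unique subspace is $M''=U_{n-1,|M''|}$) is precisely what guarantees the rank never drops during the descent. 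This extra argument is genuine content: without it the naive greedy can terminate on a dependent $n$-set (e.g.\ rank~$3$, ground set $\{1,\dots,5\}$, single line $\{1,2,3\}$, picking $4$ and $5$ first). So your route is the same as the paper's, but your handling of the basis condition closes a detail the paper leaves implicit.
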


\begin{proof}
From Equation~\eqref{neweq}, we have that $M$ is inductively connected if and only if there exists an ordering $w=(p_{1},\ldots,p_{d})$ such that:
\begin{equation}\label{neweq2}
\text{$\{p_{1},\ldots,p_{n}\}$ is a basis of $M$ and $p_{i}\not \in Q_{M_{w}[i]}$ for all $n<i\leq d$.}
\end{equation}
Suppose there exists a submatroid $\emptyset \subsetneq N \subset M$ with $Q_{N}=N$. Assume $M$ is inductively connected with an ordering $w=(p_{1},\ldots,p_{d})$ as in Equation~\eqref{neweq2}. Let $i$ be the smallest index such that $p_{i}\in N$. Since $Q_{N}=N$, we have $p_{i}\in Q_{N}\subset Q_{M_{w}[i]}$, which is a contradiction.

Conversely, if no submatroid $\emptyset \subsetneq N \subset M$ satisfies $Q_{N}=N$, we can construct an ordering $w$ that fulfills the conditions of Equation~\eqref{neweq2}. This is done by recursively selecting $p_{i}$ as any point of $N_i:=M \backslash \{p_{i+1},\ldots,p_{d}\}$ such that $p_{i}\not \in Q_{N_i}$, which is always possible by our assumption. %This completes the proof.
\end{proof}

\begin{example}
Consider the $4$-paving matroid $M$ on $[8]$ with dependent hyperplanes 
\[\mathcal{L}_M=\{\{1,2,3,4\},\{3,4,5,6\},\{5,6,7,8\},\{7,8,1,2\},\{1,2,5,6\}\}.\]
%It is easy to verify that 
The ordering $w=(1,2,5,6,3,4,7,8)$ satisfies the conditions of Equation~\eqref{neweq}, hence $M$ is inductively connected. However, adding the circuit $\{3,4,7,8\}$ to the set of dependent hyperplanes results in a paving matroid that is not inductively connected, since all points would then have degree $3$.
\end{example}

\begin{remark}
Note that for rank $3$ matroids, the family of inductively connected matroids coincides with the class of {\em solvable} matroids introduced in \cite{Emiliano-Fatemeh5}. Furthermore, in the context of line arrangements, this family aligns with the definition of \textit{inductively connected} line arrangements from \cite{nazir2012connectivity}.
\end{remark}

\begin{definition}\normalfont \label{new definition}
Let $M$ be a matroid of rank $n$ with the elements $\mathcal{P}_M$. We fix the following notions:

\medskip
(i) For a point $p\in \mathcal{P}_M$ and $\mathcal{L}_{p}$ as given in Definition~\ref{general}, we define  \begin{equation}\label{am}
a_{p}=\sum_{l\in \mathcal{L}_{p}} \ \rank(l) -n(\size{\mathcal{L}_{p}}-1).
\end{equation}

\smallskip
(ii) A point $p\in \mathcal{P}_M$ is called $2$-\textit{simple} if either $\size{\mathcal{L}_{p}}\leq 1$ or
$\mathcal{L}_{p}=\{l_{1},l_{2}\}$ with $\rank (l_{1}\cup l_{2})=n$.

\medskip
(iii) For a point $p\in \mathcal{P}_M$ of degree two, where $\mathcal{L}_{p}=\{l_{1},l_{2}\}$, we denote 
\begin{equation}\label{bp}b_{p}=\rank(l_{1})+\rank(l_{2})-\rank(l_{1}\cup l_{2}).
\end{equation}
Note that $b_{p}$ coincides with $a_{p}=\rank(l_{1})+\rank(l_{2})-n$ if and only if $p$ is a $2$-simple point. 

\medskip
(iv) Assuming $M$ is inductively connected and $w=(p_{1},\ldots,p_{d})$ an ordering of its points as in \eqref{neweq}. For each $i\in [d]$, we define $\widetilde{\tau}_{i}$ as follows:
\[
\widetilde{\tau}_{i} = \begin{cases} 
\tau_{i} & \text{if } \lvert {(\mathcal{L}_{w}[i])}_{p_{i}}\rvert \leq 1, \\
\rank(l_{1}) + \rank(l_{2}) - \rank(l_{1} \cup l_{2}) & \text{if } {(\mathcal{L}_{w}[i])}_{p_{i}} = \{l_{1}, l_{2}\}.
\end{cases}
\]
\end{definition}

We now provide a characterization of $2$-simple points in elementary split matroids.

\begin{lemma}\label{split is simple}
Let $M$ be an %inductively connected 
elementary split matroid of rank $n$. Then every point in $M$ of degree at most two is $2$-simple.
\end{lemma}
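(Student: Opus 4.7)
The plan is to reduce the statement to the case $|\mathcal{L}_{p}|=2$, since $|\mathcal{L}_{p}|\le 1$ is $2$-simple by definition. For the remaining case, I will use Lemma~\ref{subspaces split} to identify $\mathcal{L}_{p}=\{H_{i},H_{j}\}$ with two hyperedges of the defining hypergraph, and then show directly that $\rank(H_{i}\cup H_{j})=n$.

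First I would exploit the two size inequalities from Definition~\ref{elementary}, namely $\size{H_{j}}\ge r_{j}+1$ and $\size{H_{i}\cap H_{j}}\le r_{i}+r_{j}-n$, to conclude
\[
\size{H_{j}\setminus H_{i}}=\size{H_{j}}-\size{H_{i}\cap H_{j}}\ge (r_{j}+1)-(r_{i}+r_{j}-n)=n-r_{i}+1.
\]
In particular, we can select a subset $T\subset H_{j}\setminus H_{i}$ with $\size{T}=n-r_{i}$.

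Next, I would invoke the structural result \textup{\cite[Theorem~10]{berczi2023hypergraph}} (already used in the proof of Lemma~\ref{conectados}), which gives $M/H_{i}\cong U_{n-r_{i},d-\size{H_{i}}}$. Since $T$ is disjoint from $H_{i}$ and has exactly $n-r_{i}$ elements, it is a basis of $M/H_{i}$; equivalently, $\rank_{M}(T\cup H_{i})=\rank_{M}(H_{i})+\size{T}=r_{i}+(n-r_{i})=n$. Because $T\cup H_{i}\subseteq H_{i}\cup H_{j}$, this forces $\rank(H_{i}\cup H_{j})=n$, which is exactly the condition for $p$ to be $2$-simple.

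I do not expect any serious obstacle here: the entire argument is a short combinatorial calculation once Lemma~\ref{subspaces split} identifies the two subspaces with hyperedges and the contraction $M/H_{i}$ is recognized as uniform. The only mild subtlety is checking that one can actually choose $n-r_{i}$ elements of $H_{j}\setminus H_{i}$, which is handled by the size inequality above.
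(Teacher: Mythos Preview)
Your proof is correct. Both you and the paper reduce to showing $\rank(H_{i}\cup H_{j})=n$ for the two hyperedges containing $p$, but the verifications diverge. The paper works directly with the explicit rank formula from Definition~\ref{elementary}, namely $\rank_{M}(F)=\min\{n,\size{F},\min_{k}(\size{F\setminus H_{k}}+r_{k})\}$: after observing $\size{H_{i}\cup H_{j}}\ge n$, it checks for every $k$ that $\size{(H_{i}\cup H_{j})\setminus H_{k}}\ge n-r_{k}$ via the pairwise intersection bound (taking $k\neq i$ without loss of generality, so that $\size{H_{i}\setminus H_{k}}\ge n-r_{k}$). Your argument instead exhibits a rank-$n$ witness inside $H_{i}\cup H_{j}$: you pick $n-r_{i}$ elements of $H_{j}\setminus H_{i}$ and invoke the uniform contraction $M/H_{i}\cong U_{n-r_{i},\,d-\size{H_{i}}}$ to certify that these elements are independent over $H_{i}$. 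Your route avoids the quantification over all hyperedges $H_{k}$ at the price of importing an external structural result; the paper's route is self-contained from the rank formula but must treat every $k$. Both arguments are short and equally legitimate.
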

\begin{proof}
Let $\mathcal{H}=\{H_{1},\ldots,H_{q}\}$ be the corresponding hypergraph of $M$. By Lemma~\ref{subspaces split}, the set of subspaces of $M$ is $\mathcal{L}=\{H_{1},\ldots,H_{q}\}$. To prove the statement, it suffices to show that $\rank(H_{i}\cup H_{j})=n$ for any $i\neq j\in [q]$. Since $\size{H_{i}\cap H_{j}}\leq r_{i}+r_{j}-n$, it follows that $\size{H_{i}\cup H_{j}}\geq n$. By Definition~\ref{elementary}, to verify that $\rank(H_{i}\cup H_{j})=n$, it remains to show that $\size{(H_{i}\cup H_{j})\backslash H_{k}}\geq n-r_{k}$ for all $k\in [q]$. Without loss of generality, assume $k\neq i$; then, since $\size{H_{i}\cap H_{k}}\leq r_{i}+r_{k}-n$, we conclude that $\size{(H_{i}\cup H_{j})\backslash H_{k}}\geq \size{H_{i}\backslash H_{k}}\geq r_{i}-\size{H_{i}\cap H_{k}}\geq n-r_{k}$, which completes the proof.
\end{proof}

\begin{lemma}\label{geno 2}
Let $M$ be an inductively connected elementary split matroid on $[d]$. 
Then, we have \[\sum_{i=1}^{d} \widetilde{\tau}_{i}=\dnaive(\Gamma_{M}).\] 
\end{lemma}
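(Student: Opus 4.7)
The plan is to combine Proposition~\ref{igual 3}, which gives the exact equality $\dnaive(\Gamma_{M}) = \sum_{i=1}^{d} \tau_{i}$ for elementary split matroids (provided the ordering starts with a basis), with a termwise comparison showing $\widetilde{\tau}_{i} = \tau_{i}$ for every $i$. Fix an ordering $w = (p_1,\ldots,p_d)$ witnessing that $M$ is inductively connected; by Definition~\ref{def simp}, $\{p_1,\ldots,p_n\}$ is a basis and $\lvert (\mathcal{L}_{w}[i])_{p_i}\rvert \leq 2$ for all $i > n$, so the hypotheses of Proposition~\ref{igual 3} are met and the first equality holds.

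Next I would compare $\tau_i$ with $\widetilde{\tau}_i$ index by index. When $\lvert (\mathcal{L}_{w}[i])_{p_i}\rvert \leq 1$, the two quantities agree by the very definition of $\widetilde{\tau}_i$, so nothing is to be done. The only substantive case is when $(\mathcal{L}_{w}[i])_{p_i} = \{l_1,l_2\}$, which by inductive connectivity can happen only for $i > n$. In this case Equation~\eqref{equ tau} yields $\tau_i = \rank(l_1) + \rank(l_2) - n$, whereas $\widetilde{\tau}_i = \rank(l_1) + \rank(l_2) - \rank(l_1 \cup l_2)$. So the whole statement reduces to the assertion $\rank(l_1 \cup l_2) = n$, i.e.\ that $p_i$ is a $2$-simple point of the submatroid $M_{w}[i]$ in the sense of Definition~\ref{new definition}.

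To establish this, I would observe that the class of elementary split matroids is closed under deletion: by \textup{\cite[Theorem~8]{berczi2023hypergraph}} (already invoked in the proof of Proposition~\ref{igual 3}), removing a single point from an elementary split matroid yields another elementary split matroid, and iterating this deletion $d-i$ times shows that $M_{w}[i]$ is itself elementary split. Because $p_i$ has degree $\lvert (\mathcal{L}_{w}[i])_{p_i}\rvert \leq 2$ in $M_w[i]$, Lemma~\ref{split is simple} applied to $M_w[i]$ guarantees that $p_i$ is $2$-simple in that submatroid, yielding precisely $\rank(l_1 \cup l_2) = n$. Consequently $\widetilde{\tau}_i = \tau_i$ for every $i$, and summing over $i$ together with Proposition~\ref{igual 3} gives the desired identity.

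The only part that needs a moment of care is the reduction to the submatroid: one must not confuse the degree of $p_i$ in the ambient matroid $M$ with its degree in $M_w[i]$, since Lemma~\ref{split is simple} only guarantees $2$-simplicity for points of low degree within the matroid where that degree is measured. Once this bookkeeping is made explicit, there is no real obstacle, and the proof is a clean concatenation of Proposition~\ref{igual 3}, Lemma~\ref{split is simple}, and the deletion-stability of the elementary split property.
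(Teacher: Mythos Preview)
Your proof is correct and follows essentially the same route as the paper: show $\widetilde{\tau}_i=\tau_i$ for every $i$ via Lemma~\ref{split is simple}, then invoke the equality case of Proposition~\ref{igual 3}. If anything, your argument is slightly more careful than the paper's, since you make explicit that Lemma~\ref{split is simple} must be applied to the submatroid $M_w[i]$ and therefore requires the deletion-stability of elementary split matroids (via \cite[Theorem~8]{berczi2023hypergraph}); the paper's proof uses this implicitly but does not spell it out.
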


\begin{proof}
Let $w=(p_{1},\ldots,p_{d})$ be the corresponding ordering as in Equation~\eqref{neweq}. By Lemma~\ref{split is simple}, every $p_{i}$ is a $2$-simple point of $M_{w}[i]$, implying that $\widetilde{\tau_{i}}=\tau_{i}$ for all $i\in [d]$. Thus, 
$\textstyle \sum_{i=1}^{d} \widetilde{\tau}_{i}=\textstyle \sum_{i=1}^{d}\tau_{i}$. 
Now, the result holds by 
Proposition~\ref{igual 3}, as $M$ is elementary split.
\end{proof}

\subsection{Realization spaces of inductively connected matroids}\label{sec:realization}

The main result of this subsection, Theorem~\ref{gen}, establishes the irreducibility and smoothness of the realization space for any realizable inductively connected matroid, along with an explicit formula for its dimension. Moreover, we prove that this dimension coincides with the naive dimension when the matroid is both inductively connected and elementary split. Furthermore, we provide an explicit description of the realization space as an open Zariski subset of a complex space.

\medskip

In the next proposition, we examine the realization space of a matroid obtained by adding a point of degree at most two to a given matroid. (Recall the notions $a_p$ and $b_p$ from Definition~\ref{new definition}).

\begin{proposition}\label{propo a1}
Let $M$ be a matroid and let $p\in \mathcal{P}_M$ be a point of degree at most two. Define $N$ as the submatroid with points $\mathcal{P}_M\backslash \{p\}$. Then, the following hold:
\begin{itemize}
\item[{\rm (i)}] If $\deg(p)=0$, then $\Gamma_{M}$ is isomorphic to a Zariski open subset of $\Gamma_{N}\times \CC^{n}$.
\item[{\rm (ii)}] If $\deg(p)=1$, then $\Gamma_{M}$ is isomorphic to a Zariski open subset of $\Gamma_{N}\times \CC^{a_{p}}$.
\item[{\rm (iii)}] If $\deg(p)=2$, then $\Gamma_{M}$ is isomorphic to an open Zariski subset of $\Gamma_{N}\times \CC^{b_{p}}$.
\end{itemize}
\end{proposition}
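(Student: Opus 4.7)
The plan is to study, in each of the three cases, the natural projection $\pi\colon\Gamma_{M}\to\Gamma_{N}$ that forgets the coordinate $\gamma_{p}$. For a fixed realization $\gamma\in\Gamma_{N}$, the fiber $\pi^{-1}(\gamma)$ consists of vectors $\gamma_{p}\in\CC^{n}$ that (a) lie in every subspace $\gamma_{l}$ for $l\in\mathcal{L}_{p}$, and (b) introduce no dependency beyond those of $M$. Condition (b) is always Zariski open, so the substantive task is to algebraically trivialize the constraint locus (a) as an affine bundle of the predicted rank over $\Gamma_{N}$.

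Case (i) is immediate: $\deg(p)=0$ renders condition (a) vacuous, so the identity presents $\Gamma_{M}$ as the Zariski open subset of $\Gamma_{N}\times\CC^{n}$ cut out by (b). For case (ii), write $\mathcal{L}_{p}=\{l\}$ with $a_{p}=\rank(l)$ and fix an independent subset $\{q_{1},\ldots,q_{a_{p}}\}\subset l$. For every $\gamma\in\Gamma_{N}$ these vectors are independent and span $\gamma_{l}$, so the morphism
\[
\Phi\colon \Gamma_{N}\times\CC^{a_{p}}\longrightarrow \Gamma_{N}\times\CC^{n},\qquad (\gamma,\lambda)\longmapsto\Bigl(\gamma,\ \textstyle\sum_{j}\lambda_{j}\gamma_{q_{j}}\Bigr),
\]
is an algebraic isomorphism onto the constraint locus. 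Its inverse is constructed via Cramer's rule on the Zariski cover of $\Gamma_{N}$ by opens where a chosen $a_{p}\times a_{p}$ minor of $[\gamma_{q_{1}}|\cdots|\gamma_{q_{a_{p}}}]$ is nonvanishing; the local inverses glue. Pulling back (b) through $\Phi$ exhibits $\Gamma_{M}$ as a Zariski open subset of $\Gamma_{N}\times\CC^{a_{p}}$.

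Case (iii) is the main obstacle. With $\mathcal{L}_{p}=\{l_{1},l_{2}\}$ and bases $\{q_{1},\ldots,q_{k_{1}}\}\subset l_{1}$, $\{r_{1},\ldots,r_{k_{2}}\}\subset l_{2}$, the constraint $\gamma_{p}\in\gamma_{l_{1}}\cap\gamma_{l_{2}}$ cuts out a linear subspace of constant dimension $b_{p}=\rank(l_{1})+\rank(l_{2})-\rank(l_{1}\cup l_{2})$: this constancy follows from $\dim(U\cap V)=\dim U+\dim V-\dim(U+V)$ together with the realization identities $\dim\gamma_{l_{i}}=\rank(l_{i})$ and $\dim\gamma_{l_{1}\cup l_{2}}=\rank(l_{1}\cup l_{2})$, which hold uniformly on $\Gamma_{N}$. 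Writing $\gamma_{p}=\sum\alpha_{i}\gamma_{q_{i}}=\sum\beta_{j}\gamma_{r_{j}}$ realizes the fiber as the kernel of the linear map $(\alpha,\beta)\mapsto\sum\alpha_{i}\gamma_{q_{i}}-\sum\beta_{j}\gamma_{r_{j}}$, yielding a rank-$b_{p}$ algebraic subbundle of $\Gamma_{N}\times\CC^{k_{1}+k_{2}}$ that must be trivialized algebraically.

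For the trivialization I would apply Lemma~\ref{basis}: fix a reference splitting $\CC^{n}=T\oplus S$ with $\dim S=b_{p}$ and an extensor $s$ associated to $S$; for suitable supplementary vectors $v_{1},\ldots,v_{b_{p}}$ with $T+\is{v_{i}}=\CC^{n}$, the meet extensors $t_{i}\wedge s$ produce $b_{p}$ algebraic vectors that form a basis of $\gamma_{l_{1}}\cap\gamma_{l_{2}}$ on the Zariski open locus where the transversality hypothesis of the lemma is satisfied. The constancy of $\dim(\gamma_{l_{1}}\cap\gamma_{l_{2}})=b_{p}$ guarantees density of this locus, and composing with Cramer-type local trivializations as in case (ii) assembles a global algebraic isomorphism from $\Gamma_{N}\times\CC^{b_{p}}$ onto the constraint locus. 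Pulling back condition (b) then identifies $\Gamma_{M}$ with the desired Zariski open subset of $\Gamma_{N}\times\CC^{b_{p}}$.
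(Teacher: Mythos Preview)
Cases (i) and (ii) are correct and follow the paper's route; your gluing of local Cramer inverses in (ii) is legitimate because the coefficients $\lambda_j$ are uniquely determined by $\gamma_p$, so the locally defined inverse maps agree on overlaps and patch to a global regular inverse.

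Case (iii), however, has a real gap. In Lemma~\ref{basis} the output $\{t_i\wedge s\}$ is a basis of $S$, so to obtain a basis of $\gamma_{l_1}\cap\gamma_{l_2}$ you must take $S=\gamma_{l_1}\cap\gamma_{l_2}$ itself, not a fixed ``reference'' subspace, and you then need an extensor $s$ for this varying intersection as a regular function of $\gamma$. The only candidate is the Grassmann--Cayley meet of extensors for $\gamma_{l_1}$ and $\gamma_{l_2}$, but by Lemma~\ref{klj} that meet is nonzero only when $\gamma_{l_1}+\gamma_{l_2}=\CC^n$, i.e.\ only when $p$ is $2$-simple. For a general degree-two point one has $m=\rank(l_1\cup l_2)<n$ and the meet in $\CC^n$ vanishes identically, so your scheme produces no sections at all. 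The paper's key maneuver, absent from your sketch, is to first pass to $\CC^{m}$ via a fixed basis $\{p_1,\dots,p_m\}$ of $l_1\cup l_2$: the images of $\gamma_{l_1}$ and $\gamma_{l_2}$ now span the ambient $\CC^{m}$, the meet there is nonvanishing, Lemma~\ref{basis} (applied with $T$ a specific complement built from the two bases $\mathcal{B}_1,\mathcal{B}_2$ and verified to be transverse by a direct computation) yields a \emph{global} regular frame $g_1,\dots,g_{b_p}$ for $\gamma_{l_1}\cap\gamma_{l_2}$, and one pulls back to $\CC^n$.

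Your fallback of ``assembling a global algebraic isomorphism'' from local trivializations does not close the gap. Unlike case (ii), local bases of $\gamma_{l_1}\cap\gamma_{l_2}$ are not unique: different charts are related by transition matrices in $\mathrm{GL}_{b_p}$, so all you get is a locally trivial rank-$b_p$ vector bundle over $\Gamma_N$. Nothing in your argument forces that bundle to be algebraically trivial, and the proposition demands a global isomorphism with $\Gamma_N\times\CC^{b_p}$. A single global regular frame is therefore genuinely required, which is exactly what the passage to $\CC^m$ supplies.
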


\begin{proof}
Let $\Gamma_{M}\rq$ denote the set of all collections of vectors $\gamma \in V_{\mathcal{C}(M)}$ such that $\restr{\gamma}{N}\in \Gamma_{N}$. Note that $\Gamma_{M}\subset \Gamma_{M}\rq$, since $\Gamma_{M}\subset V_{\mathcal{C}(M)}$ and $\restr{\gamma}{N}\in \Gamma_{N}$ for any $\gamma\in \Gamma_{M}$. Furthermore, since $\Gamma_{M}\rq\subset V_{\mathcal{C}(M)}$ and $\Gamma_{M}=V_{\mathcal{C}(M)}\backslash V(J_{M})$, where $V(J_{M})$ is defined as in Equation~\eqref{jm}, we have
\[\Gamma_{M}\subseteq \Gamma_{M}\rq\backslash V(J_{M})\subseteq V_{\mathcal{C}(M)}\backslash V(J_{M})=\Gamma_{M},\]
implying that
$\Gamma_{M}=\Gamma_{M}\rq\backslash V(J_{M})$.
Hence, %From this, it follows that 
$\Gamma_{M}$ is a Zariski open subset of $\Gamma_{M}\rq$. Thus, it is enough to show that $\Gamma_{M}\rq$ is isomorphic to the desired set. To achieve this, we consider each case separately:

\medskip
{\bf (i)} Suppose that $\deg(p)=0$. In this case, there are no constraints on the point $p$, so any element of $\Gamma_{M}\rq$ can be derived from a realization of $N$ alongside any choice of a vector in $\CC^{n}$ associated with $p$. Based on this, we define the polynomial function 
 \begin{equation}\label{degree 0}
 \begin{aligned}
F:\ &\Gamma_{N}\times \CC^{n}\rightarrow \Gamma_{M}\rq \quad\text{with}\quad
 (\gamma,v)  \mapsto \widetilde{\gamma},
\end{aligned}
 \end{equation}
where $\widetilde{\gamma}$ is defined as $\widetilde{\gamma}_{q}=\gamma_{q}$ for $q\neq p$ and $\widetilde{\gamma}_{p}=v$. It is clear that $F$ is bijective and that both $F$ and $F^{-1}$ are polynomial maps. Therefore, we conclude that $\Gamma_{M}\rq \cong \Gamma_{N}\times \CC^{n}$.

\medskip
{\bf (ii)}  Suppose that $\deg(p)=1$ and let $\mathcal{L}_{p}=\{l\}$ with $q_{1},\ldots,q_{\rank(l)}$ forming a basis of $l$. In this case, we define the polynomial function 
 \begin{equation}\label{degree 1}
 \begin{aligned}
F: \ &\Gamma_{N}\times \CC^{\rank(l)}\rightarrow \Gamma_{M}\rq \quad\text{with}\quad
 (\gamma,c_{1},\ldots,c_{\rank(l)}) \mapsto \widetilde{\gamma},
\end{aligned}
 \end{equation}
where $\widetilde{\gamma}$ is defined as $\widetilde{\gamma}_{q}=\gamma_{q}$ for $q\neq p$ and $\widetilde{\gamma}_{p}=\textstyle \sum_{i=1}^{\rank(l)}c_{i}\gamma_{q_{i}}$. Since $\{\gamma_{q_{1}},\ldots,\gamma_{q_{\rank(l)}}\}$ forms a basis of $\gamma_{l}$, it is clear that $F$ is bijective and that both $F$ and $F^{-1}$ are polynomial maps. Therefore, we have $\Gamma_{M}\rq \cong \Gamma_{N}\times \CC^{\rank(l)}$. 
This establishes that $\Gamma_{M}$ is isomorphic to an open Zariski subset of $\Gamma_{N}\times \CC^{a_{p}}$.

\medskip
{\bf (iii)}  Suppose $\deg(p)=2$. Let $\mathcal{L}_{p}=\{l_{1},l_{2}\}$ and $m=\rank(l_{1}\cup l_{2})$. 
Choose bases 
$
\{q_{1},\ldots,q_{\rank(l_{1})}\}$ and $\{r_{1},\ldots,r_{\rank(l_{2})}\}$
of $l_{1}$ and $l_{2}$, respectively. Consider the basis 
$\mathcal{B}_{1}=\{q_{1},\ldots,q_{\rank(l_{1})},q_{1}\rq,\ldots,q_{m-\rank(l_{1})}\rq\}$ of $l_{1}\cup l_{2}$, extending the independent set $\{q_{1},\ldots,q_{\rank(l_{1})}\}$. Since the set $\{q_{1},\ldots,q_{\rank(l_{1})},r_{1},\ldots,r_{\rank(l_{2})}\}$ has full rank on $l_{1}\cup l_{2}$, we can also choose a basis $\mathcal{B}_{2}=\{r_{1},\ldots,r_{\rank(l_{2})},r_{1}\rq,\ldots,r_{m-\rank(l_{2})}\rq\}$ of $l_{1}\cup l_{2}$, ensuring that $\{r_{1}\rq,\ldots,r_{m-\rank(l_{2})}\rq\}\subset \{q_{1},\ldots,q_{\rank(l_{1})}\}$.

\medskip
\noindent {\bf Claim.} For any $\gamma \in \Gamma_{N}$, let $S=\is{\gamma_{q_{1}\rq},\ldots,\gamma_{q_{m-\rank(l_{1})}\rq},\gamma_{r_{1}\rq},\ldots,\gamma_{r_{m-\rank(l_{2})}\rq}}$. Then, we have 
\begin{equation}\label{equg}
\gamma_{l_{1}}\cap \gamma_{l_{2}}+S
=\gamma_{l_{1}}+\gamma_{l_{2}}.
\end{equation}
First note that
$\dim(\gamma_{l_{1}}\cap \gamma_{l_{2}})=\dim(\gamma_{l_{1}})+\dim(\gamma_{l_{2}})-\dim(\gamma_{l_{1}}+\gamma_{l_{2}})=\rank(l_{1})+\rank(l_{2})-\rank(l_{1}\cup l_{2})$.

To prove the claim, we need to show that $(\gamma_{l_{1}}\cap \gamma_{l_{2}})\cap S=\{0\}$. Suppose that for some $x\in \gamma_{l_{1}}\cap \gamma_{l_{2}}$, we have % an expression of the form 
$\textstyle{x+\sum_{i}a_{i}\gamma_{q_{i}\rq}+\sum_{j}b_{j}\gamma_{r_{j}\rq}=0}$. Since $\gamma_{r_{j}\rq}\in \gamma_{l_{1}}$ for every $j$, it follows that $\textstyle \sum_{i}a_{i}\gamma_{q_{i}\rq}\in \gamma_{l_{1}}$. Given that $\mathcal{B}_{1}$ is a basis, $\gamma_{l_{1}}\cap \is{\gamma_{q_{1}\rq},\ldots,\gamma_{q_{m-\rank(l_{1})}\rq}}=\{0\}$, implying $a_{i}=0$ for all $i$. Thus,~\eqref{equg} reduces to 
$\textstyle{x+\sum_{j}b_{j}\gamma_{r_{j}\rq}=0}$.
Since $\mathcal{B}_{2}$ is a basis, %we conclude that 
$\is{\gamma_{r_{1}\rq},\ldots,\gamma_{r_{m-\rank(l_{2})}\rq}}\cap \gamma_{l_{2}}=\{0\}$. Hence, as $x\in \gamma_{l_{2}}$, we find that $b_{j}=0$ for all $j$.  This shows that all the coefficients in~\eqref{equg} must vanish, proving the claim. \qed

\medskip
Our goal now is to derive an expression for a basis of $\gamma_{l_{1}}\cap \gamma_{l_{2}}$ as polynomial functions on the vectors of $\gamma$ for any $\gamma\in \Gamma_{N}$.
We fix $\mathcal{B}=\{p_{1},\ldots,p_{m}\}$ a basis of $l_{1}\cup l_{2}$. For each $\gamma \in \Gamma_{N}$, let $A_{\gamma}$ be the $n\times m$ matrix whose columns are the vectors $\{\gamma_{p_{i}}:i\in [m]\}$. Notice that $A_{\gamma}$ has full column rank, with its columns forming a basis of $\gamma_{l_{1}}+ \gamma_{l_{2}}$. For each $q\in \closure{l_{1}\cup l_{2}}$, we denote by $\gamma_{q}\rq \in \CC^{m}$ the vector $(A_{\gamma}^{\text{T}}A_{\gamma})^{-1}A_{\gamma}^{\text{T}}\gamma_{q}$.
The vector $\gamma_{q}\rq$ represents the coordinates of $\gamma_{q}$ in the basis $\{\gamma_{p_{i}}:i\in [m]\}$, meaning $\gamma_{q}=A_{\gamma}\gamma_{q}\rq$. For ease of notation, we denote $B_{\gamma}=(A_{\gamma}^{\text{T}}A_{\gamma})^{-1}A_{\gamma}^{\text{T}}$.

\medskip
Let $\{s_{1},\ldots,s_{b_{p}}\}$ be the elements of $\mathcal{B}_{1}\backslash \{q_{1}\rq,\ldots,q_{m-\rank(l_{1})}\rq,r_{1}\rq,\ldots,r_{m-\rank(l_{2})}\rq\}$. Since $\mathcal{B}_{1}$ is a basis,  
$S+\is{\gamma_{s_{1}},\ldots,\gamma_{s_{b_{p}}}}=\gamma_{l_{1}}+\gamma_{l_{2}}$,
which implies that $B_{\gamma}(S)+\is{\gamma_{s_{1}}\rq,\ldots,\gamma_{s_{b_{p}}}\rq}=\CC^{m}$. Applying Lemma~\ref{basis} with $B_{\gamma}(\gamma_{l_{1}}\cap \gamma_{l_{2}}),B_{\gamma}(S)$ and $\{\gamma_{s_{1}}\rq,\ldots,\gamma_{s_{b_{p}}}\rq\}$, we find that the set 
\[\{v_{i}:=\gamma_{q_{1}}\rq\cdots \gamma_{q_{\rank(l_{1})}}\rq\wedge \gamma_{r_{1}}\rq\cdots \gamma_{r_{\rank(l_{2})}}\rq\wedge (\gamma_{q_{1}\rq}\rq\cdots\gamma_{q_{m-\rank(l_{1})}\rq}\rq\vee \gamma_{r_{1}\rq}\rq\cdots \gamma_{r_{m-\rank(l_{2})}\rq}\rq \vee \gamma_{s_{i}}\rq):i\in [b_{p}]\}  \]
forms a basis of $B_{\gamma}(\gamma_{l_{1}}\cap \gamma_{l_{2}})$. Thus, the set of vectors 
$\{A_{\gamma}v_{i}:i\in [b_{p}]\}$
forms a basis of $\gamma_{l_{1}}\cap \gamma_{l_{2}}$. Therefore, there exist polynomial functions $g_{1},\ldots,g_{b_{p}}$ such that $\{g_{1}(\gamma),\ldots,g_{b_{p}}(\gamma)\}$ forms a basis of $\gamma_{l_{1}}\cap \gamma_{l_{2}}$ for each $\gamma \in \Gamma_{N}$. We can then define the polynomial function
 \begin{equation}\label{degree 2}
 \begin{aligned}
F: \ &\Gamma_{N}\times \CC^{b_{p}}\rightarrow \Gamma_{M}\rq \quad\text{with}\quad
(\gamma,c_{1},\ldots,c_{b_{p}}) \mapsto \widetilde{\gamma},
\end{aligned}
 \end{equation}
where $\widetilde{\gamma}$ is defined as $\widetilde{\gamma}_{q}=\gamma_{q}$ for $q\neq p$ and $\widetilde{\gamma}_{p}=\textstyle \sum_{i=1}^{b_{p}}c_{i}g_{i}(\gamma)$. It follows that $F$ is a bijective polynomial map, and similarly, we can verify that $F^{-1}$ is a polynomial map obtained by taking the inverse matrix corresponding to the basis $\{g_{1}(\gamma),\ldots,g_{b_{p}}(\gamma)\}$ of $\gamma_{l_{1}}\cap \gamma_{l_{2}} $. Hence, we have established that $\Gamma_{M}\rq$ is isomorphic to $\Gamma_{N}\times \CC^{b_{p}}$, as desired.
\end{proof}

\begin{remark}\label{smo}
\begin{itemize}
   \item The proof of Proposition~\ref{propo a1} also holds for the matroid stratum $\text{Gr}(M,\CC)$. In particular, smoothness is preserved by adding points of degree at most two. 
 \item Proposition~\ref{propo a1}(iii) can be regarded as a generalization of \textup{\cite[Proposition~3.5]{corey2023singular}}, which establishes a similar result for points lying in at most two dependent hyperplanes. 
\item In Proposition~\ref{propo a1}(iii), if $a_{p}=1$ and $p$ is $2$-simple, then the vector in the intersection $\gamma_{l_{1}}\cap \gamma_{l_{2}}$ can be directly expressed as $\gamma_{l_{1}}\wedge \gamma_{l_{2}}$.
\end{itemize}
\end{remark}

Proposition~\ref{propo a1} and Theorem~\ref{gen} imply that the realization spaces of \textit{inductively connected} matroids are irreducible, extending \textup{\cite[Proposition~3.3]{nazir2012connectivity}}, which showed this property for inductively connected line arrangements, or equivalently, for rank $3$ matroids, as discussed in Example~\ref{corresp}.

\begin{theorem}\label{gen}
Let $M$ be an inductively connected matroid on $[d]$. 
Then $\Gamma_{M}$ is isomorphic to a Zariski open subset of a complex space of dimension $\textstyle \sum_{i=1}^{d} \widetilde{\tau}_{i}.$ 
\end{theorem}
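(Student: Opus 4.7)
The plan is to prove the theorem by induction on $d$, building up $M$ one point at a time along the ordering $w=(p_1,\ldots,p_d)$ provided by the definition of inductively connected (Definition~\ref{def simp}), and applying Proposition~\ref{propo a1} at every step. Concretely, at the $i$-th stage we go from $\Gamma_{M_w[i-1]}$ to $\Gamma_{M_w[i]}$ by adding the point $p_i$, which by the inductive-connectedness hypothesis has degree $|(\mathcal{L}_w[i])_{p_i}|\le 2$ in $M_w[i]$. This is exactly the situation handled by Proposition~\ref{propo a1}.

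For the base case, note that $\{p_1,\ldots,p_n\}$ is a basis of $M$, so $M_w[n]$ is the uniform matroid $U_{n,n}$ and $\Gamma_{M_w[n]}=\textup{GL}_n(\CC)$ is a Zariski open subset of $\CC^{n^2}$. Moreover, for each $i\le n$ the set of subspaces $(\mathcal{L}_w[i])_{p_i}$ is empty (since there is no circuit among an independent set), so by Definition~\ref{defi f} and Definition~\ref{new definition} we have $\widetilde{\tau}_i=\tau_i=0-n(0-1)=n$, giving $\sum_{i=1}^n\widetilde{\tau}_i=n^2$ as required.

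For the inductive step with $i>n$, assume $\Gamma_{M_w[i-1]}$ is isomorphic to a Zariski open subset $V_{i-1}\subset\CC^{\sum_{j=1}^{i-1}\widetilde{\tau}_j}$. I would apply Proposition~\ref{propo a1} to the inclusion $M_w[i-1]\subset M_w[i]$, distinguishing the three cases according to the degree of $p_i$ in $M_w[i]$. In the degree $0$ case, $\Gamma_{M_w[i]}$ is a Zariski open subset of $\Gamma_{M_w[i-1]}\times\CC^n$ and $\widetilde{\tau}_i=n$; in the degree $1$ case with $(\mathcal{L}_w[i])_{p_i}=\{l\}$, we get an open subset of $\Gamma_{M_w[i-1]}\times\CC^{a_{p_i}}$ with $a_{p_i}=\rank(l)=\widetilde{\tau}_i$; and in the degree $2$ case with $(\mathcal{L}_w[i])_{p_i}=\{l_1,l_2\}$, we get an open subset of $\Gamma_{M_w[i-1]}\times\CC^{b_{p_i}}$ where $b_{p_i}=\rank(l_1)+\rank(l_2)-\rank(l_1\cup l_2)=\widetilde{\tau}_i$ by Definition~\ref{new definition}(iv). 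Composing with the inductive isomorphism $\Gamma_{M_w[i-1]}\cong V_{i-1}$, we identify $\Gamma_{M_w[i]}$ with a Zariski open subset of $V_{i-1}\times\CC^{\widetilde{\tau}_i}$, which is itself Zariski open in $\CC^{\sum_{j=1}^{i}\widetilde{\tau}_j}$ (an open subset of an open subset is open). Setting $i=d$ yields the theorem.

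There is no serious obstacle: Proposition~\ref{propo a1} does all the real work, and the inductive connectedness hypothesis is precisely what guarantees we never have to add a point of degree $\ge 3$. The only delicate bookkeeping is in matching the dimension added at each step with $\widetilde{\tau}_i$, and in particular noticing that the degree $2$ case must use $b_{p_i}$ rather than $a_{p_i}$, which is exactly why Definition~\ref{new definition}(iv) is phrased with the extra term $-\rank(l_1\cup l_2)$ instead of $-n$ — this is the reason the theorem is stated using $\widetilde{\tau}_i$ rather than $\tau_i$, and it explains why the equality $\dim(\Gamma_M)=\dnaive(\Gamma_M)$ requires the additional hypothesis that $p_i$ be $2$-simple in $M_w[i]$ (i.e., that $M$ be elementary split, via Lemma~\ref{split is simple}), as will be needed for Corollary~\ref{gen 2}.
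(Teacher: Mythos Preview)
Your proof is correct and follows essentially the same approach as the paper: induction on $d$ along the ordering $w$, with the base case $M_w[n]\cong U_{n,n}$ and the inductive step given by Proposition~\ref{propo a1} according to the degree of $p_i$. You have simply spelled out more explicitly than the paper does the verification that the exponent contributed at each step (namely $n$, $a_{p_i}$, or $b_{p_i}$) matches $\widetilde{\tau}_i$, and your closing remark on why $\widetilde{\tau}_i$ rather than $\tau_i$ appears is exactly the point.
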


\begin{proof}
We prove this statement by induction on $d$. When $d = n$, the result is trivially true, confirming the base case of the induction. Assume that the result holds for any matroid with at most $d - 1$ points. Let $M$ be a matroid with $d$ points, and let $w = (p_{1}, \ldots, p_{d})$ be an ordering as in Equation~\eqref{neweq}. By the induction hypothesis, $\Gamma_{M_{w}[d - 1]}$ is isomorphic to a Zariski open subset of a complex space of dimension $\textstyle \sum_{i = 1}^{d - 1} \widetilde{\tau}_{i}$. 
Applying Proposition~\ref{propo a1}, depending on whether $p_{d}$ has degree zero, one, or two, we deduce that $\Gamma_{M}$ is isomorphic to a Zariski open subset of a complex space of dimension $\textstyle \sum_{i = 1}^{d} \widetilde{\tau}_{i}$, as desired.
\end{proof}

By applying Theorem~\ref{gen} to the families of inductively connected elementary split matroids, and inductively connected matroids, we obtain the following corollaries.

\begin{corollary}\label{gen 2}
Let $M$ be an inductively connected elementary split matroid. Then $\Gamma_{M}$ is isomorphic to a Zariski open subset of a complex space of dimension $\dnaive(\Gamma_{M})$. %Moreover, we can give an explicit description of the realization space. 
\end{corollary}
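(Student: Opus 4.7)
The plan is to obtain this corollary as an immediate consequence of the two preceding results, Theorem~\ref{gen} and Lemma~\ref{geno 2}, which together handle the geometric and combinatorial halves of the statement. The geometric content is already packaged in Theorem~\ref{gen}: once we know that $M$ is inductively connected, we automatically get an ordering $w=(p_1,\ldots,p_d)$ satisfying the condition in Equation~\eqref{neweq}, and the theorem says that $\Gamma_M$ is isomorphic to a Zariski open subset of a complex space whose dimension is $\sum_{i=1}^d \widetilde{\tau}_i$. The combinatorial content is then supplied by Lemma~\ref{geno 2}, which asserts that precisely under the additional hypothesis that $M$ is elementary split, one has $\sum_{i=1}^d \widetilde{\tau}_i = \dnaive(\Gamma_M)$.

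So concretely, the proof I would write has only two steps. First, invoke Theorem~\ref{gen} to conclude that $\Gamma_M$ is isomorphic to a Zariski open subset of some affine complex space of dimension $\sum_{i=1}^d \widetilde{\tau}_i$. Second, apply Lemma~\ref{geno 2}, which depends crucially on the elementary split hypothesis (used via Lemma~\ref{split is simple} to guarantee that every point of degree two in each $M_w[i]$ is $2$-simple, so that $\widetilde{\tau}_i$ coincides with $\tau_i$, allowing Proposition~\ref{igual 3} to produce the equality $\sum_{i=1}^d \tau_i = \dnaive(\Gamma_M)$), to identify this dimension with $\dnaive(\Gamma_M)$. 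Combining the two gives the desired conclusion.

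There is no real obstacle here, since all heavy lifting has already been performed: the smoothness/irreducibility and the explicit parametrization are encapsulated in Theorem~\ref{gen}, while the matching of dimensions is encapsulated in Lemma~\ref{geno 2}. The only thing one has to be careful about is to verify that both hypotheses of those results are invoked correctly, i.e.\ that inductive connectivity is exactly what Theorem~\ref{gen} needs and that the combination of inductive connectivity with the elementary split property is exactly what Lemma~\ref{geno 2} needs; both are satisfied by assumption, so the corollary follows with no additional argument.
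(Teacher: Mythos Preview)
Your proposal is correct and follows exactly the same approach as the paper's own proof: invoke Theorem~\ref{gen} to obtain that $\Gamma_M$ is a Zariski open subset of a complex space of dimension $\sum_{i=1}^d \widetilde{\tau}_i$, then apply Lemma~\ref{geno 2} to identify this sum with $\dnaive(\Gamma_M)$. Your additional explanation of how Lemma~\ref{geno 2} uses Lemma~\ref{split is simple} and Proposition~\ref{igual 3} is accurate but is background rather than part of the proof itself.
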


\begin{proof}
Let $w=(p_{1},\ldots,p_{d})$ be an ordering of $M$ as in Equation~\eqref{neweq}. By Theorem~\ref{gen}, $\Gamma_{M}$ is isomorphic to a Zariski open subset of a complex space of dimension $\textstyle \sum_{i=1}^{d} \widetilde{\tau}_{i}$. Then, the result follows from applying Lemma~\ref{geno 2}.
\end{proof}

\begin{corollary}\label{coro pa}
Let $M$ be an inductively connected realizable matroid. Then $\Gamma_{M}$ is a smooth and irreducible variety. 
Moreover, if two hyperplane arrangements $\mathcal{A}_{1}$ and  $\mathcal{A}_{2}$ have $M$ as their associated matroid, then we have $N(\mathcal{A}_{1})\cong N(\mathcal{A}_{2})$. 
\end{corollary}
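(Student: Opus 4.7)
The plan is to derive both assertions essentially as formal consequences of Theorem~\ref{gen}, together with the matroid/arrangement correspondence from \S\ref{correspondence} and Randell's lattice isotopy theorem (Corollary~\ref{coro ar}).

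First I would handle smoothness and irreducibility. By Theorem~\ref{gen}, since $M$ is inductively connected and realizable, $\Gamma_M$ is isomorphic to a Zariski open subset $U$ of a complex affine space $\CC^N$ with $N = \sum_{i=1}^{d}\widetilde{\tau}_i$. Because $\CC^N$ is smooth and irreducible, and both smoothness and irreducibility pass to nonempty Zariski open subsets, $U$ (and hence $\Gamma_M$) is smooth and irreducible. Realizability is what ensures $U$ is nonempty so that this conclusion is nontrivial.

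For the topological statement, I would invoke the correspondence recalled in \S\ref{correspondence}: for a realizable matroid $M$, the realization space $\Gamma_M$ coincides (up to the standard identification via orthogonal/normal vectors) with the realization space $\Gamma_{\mathcal{A}}$ of any hyperplane arrangement $\mathcal{A}$ with $M_{\mathcal{A}} = M$. Thus the two arrangements $\mathcal{A}_1$ and $\mathcal{A}_2$ correspond to two points of the same space $\Gamma_M = \Gamma_{\mathcal{A}_1} = \Gamma_{\mathcal{A}_2}$. Since by the first part $\Gamma_M$ is irreducible, it has a unique irreducible component, so $\mathcal{A}_1$ and $\mathcal{A}_2$ lie in the same irreducible component of $\Gamma_{\mathcal{A}_1}$. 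Applying Corollary~\ref{coro ar} (a direct consequence of Randell's Theorem~\ref{ran}) then yields $N(\mathcal{A}_1) \cong N(\mathcal{A}_2)$.

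There is no real obstacle here; the entire statement is a packaging of earlier results. The only point that requires minor care is ensuring that the Zariski open subset appearing in Theorem~\ref{gen} is nonempty, which is exactly guaranteed by the hypothesis that $M$ is realizable (so $\Gamma_M \neq \emptyset$), and that one correctly translates between the two incarnations of the realization space (matroid vs.\ arrangement) when invoking Corollary~\ref{coro ar}.
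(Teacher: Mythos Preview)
Your proposal is correct and follows exactly the same approach as the paper's proof, which simply cites Theorem~\ref{gen} for the first statement and Corollary~\ref{coro ar} for the second. Your version just spells out the standard facts (smoothness and irreducibility pass to nonempty Zariski opens, and the matroid/arrangement correspondence) that the paper leaves implicit.
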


\begin{proof}
The first statement follows from Theorem~\ref{gen}, while the second stems from Corollary~\ref{coro ar}.
\end{proof}

Next, we present examples of matroids satisfying the realizability condition in Corollary~\ref{coro pa}.

\begin{example}\label{ex:real} A matroid is called {\em nilpotent} if it contains no submatroid in which all points have degree at least two. If $M$ is a nilpotent matroid, or all its points are $2$-simple, then, $M$ is realizable; see Theorem~4.11 and Corollary~5.26 in \cite{Emiliano-Fatemeh5}. 
\end{example} 

Another specific family of realizable matroids consists of paving matroids without points of degree greater than two. A complete characterization of their realization spaces is provided in \cite{Emiliano-Fatemeh4}.

\begin{example}
Let $M$ be a paving matroid without points of degree greater than two. In \cite{Emiliano-Fatemeh4}, it is shown that 
$M$ is realizable. Furthermore, by Corollaries~\ref{gen 2} and~\ref{coro pa}, $\Gamma_{M}$ is a smooth and irreducible variety of dimension $\dnaive(\Gamma_{M})$. Applying Lemma~\ref{na pav}, we derive an explicit formula for this dimension. Moreover, a complete set of defining equations for the matroid variety $V_{M}$ is provided in \cite{Emiliano-Fatemeh4}. 
\end{example}

\subsection{Computing defining equations of realization spaces}\label{subsec:ex}
Here, we will first provide an explicit construction for computing the defining polynomials of the Zariski open subset, as outlined in Theorem~\ref{gen}. Then, we provide detailed examples of this construction.

\begin{procedure}\label{procedure}
\normalfont
Let $M$ be an inductively connected matroid of rank $n$ on $[d]$, and let $w=(p_{1},\ldots,p_{d})$ be an ordering of its elements as in Equation~\eqref{neweq}.
Note that for each $i\geq n$, the matroid $M_{w}[i]$ is also inductively connected; thus, by Theorem~\ref{gen}, $\Gamma_{M_{w}[i]}$ is isomorphic to a Zariski open subset of a complex space.
We will recursively construct the defining polynomials of these open subsets for $i\geq n$.

\medskip
\noindent{\bf First step:} We have $\Gamma_{M_{w}[n]}\cong \textup{Gl}_{n}(\CC)$, the space of $n\times n$ matrices with non-zero determinant. 

\medskip
\noindent{\bf Recursive step:}  
Let $i\geq n+1$. 
Assume we have an explicit description of the polynomials defining $\Gamma_{M_{w}[i-1]}$. Using polynomial maps in~\eqref{degree 0},~\eqref{degree 1} and~\eqref{degree 2}, depending on the degree of $p_{i}$ in $M_{w}[i]$, we can explicitly describe the realization space $\Gamma_{M_{w}[i]}$ from the known description of $\Gamma_{M_{w}[i-1]}$.
\end{procedure}

\begin{remark}
As noted in Remark~\ref{smo}, the argument from Procedure~\ref{procedure} can be also applied to the matroid stratum $\text{Gr}(M,\CC)$, allowing us to obtain its explicit description.
\end{remark}

We now provide two examples in which we apply Procedure~\ref{procedure} to explicitly describe this parametrization, focusing specifically on the space $\text{Gr}(M,\CC)$ for simplicity.

\begin{example}
Let $M$ be the $4$-paving matroid on $[8]$ with dependent hyperplanes:
\[\mathcal{L}_M=\{\{1,2,3,4\},\{3,4,5,6\},\{5,6,7,8\},\{1,3,5,7\}\}.\]
The ordering $w=(1,2,3,5,4,6,7,8)$ satisfies the conditions of Equation~\eqref{neweq}, confirming that $M$ is inductively connected. %Additionally, note that $\{1,2,3,5\}$ forms a basis of $M$. 
Therefore, we can represent the matrix of indeterminates in the following form:

\[ \begin{pmatrix}
1 & 0 & 0 & 0 & x_{1} & x_{5} & x_{9} & x_{13}\\
0 & 1 & 0 & 0 & x_{2} & x_{6} & x_{10} & x_{14}\\
0 & 0 & 1 & 0 & x_{3} & x_{7}& x_{11}& x_{15}\\
0 & 0 & 0 & 1 & x_{4}& x_{8}& x_{12}& x_{16}
\end{pmatrix},\]
where the columns represent the points $(1,2,3,5,4,6,7,8)$. We proceed as outlined in Procedure~\ref{procedure}. 
\begin{itemize}
\item Since the point $4$ only belongs to the dependent hyperplane $\{1,2,3\}$ in $M_{w}[5]$, we have $x_{4}=0$. 
\item Since the point $6$ only belongs to the dependent hyperplane $\{3,4,5\}$ in $M_{w}[6]$, the column $6$ is of the form $(y_{1}x_{1},y_{1}x_{2},y_{2},y_{3})$. 
\item Since the point $7$ only belongs to the dependent hyperplane $\{1,3,5\}$ in $M_{w}[7]$, the column $7$ is of the form $(z_{1},0,z_{2},z_{3})$.
\item Finally, since the point $8$ only  belongs to the dependent hyperplane $\{5,6,7\}$ in $M$, the column $8$ is of the form $(w_{1}y_{1}x_{1}+w_{2}z_{1},w_{1}y_{1}x_{2},w_{1}y_{2}+w_{2}z_{2},w_{3}).$
\end{itemize}
Consequently, we can explicitly describe $\text{Gr}(M,\CC)$ as the Zariski open subset of 
$\CC[x_{1},x_{2},x_{3},y_{1},y_{2},y_{3},\\ z_{1},z_{2},z_{3},w_{1},w_{2},w_{3}]$
defined by the condition that none of the $4$-minors of the following matrix, which corresponds to a basis, vanish:
\[ \begin{pmatrix}
1 & 0 & 0 & 0 & x_{1} & y_{1}x_{2} & z_{1} & w_{1}y_{1}x_{1}+w_{2}z_{1} \\
0 & 1 & 0 & 0 & x_{2} & y_{1}x_{2} & 0 & w_{1}y_{1}x_{2}\\
0 & 0 & 1 & 0 & x_{3} & y_{2}& z_{2}&w_{1}y_{2}+w_{2}z_{2} \\
0 & 0 & 0 & 1 & 0& y_{3}& z_{3}& w_{3}
\end{pmatrix}.\]
\end{example}

\begin{example}
Let $M$ be the matroid of rank $6$ on $[12]$ with the set of subspaces 
\[\mathcal{L}_M=\{\{1,2,3,4,5,6\},\{7,8,9,10,11,12\},\{1,2,7,8\},\{3,4,9,10\},\{5,6,11,12\}\}.\]
In this case, $\mathcal{L}_M$ consists of circuits of size at most $6$. 
The ordering $w=(7,8,9,11,12,6,3,2,5,1,4,10)$ 
satisfies the conditions of Equation~\eqref{neweq}, confirming that $M$ is inductively connected. %Additionally, note that 
Therefore, we can represent the matrix of indeterminates in the following form:
\[
\left(
\begin{array}{cccccccccccc}
1&0&0 & 0 & 0 & 0 & x_{1} & x_{7} & x_{13} & x_{19}&x_{25}&x_{31}\\
0 & 1 & 0 & 0 & 0 & 0 & x_{2} & x_{8} & x_{14} & x_{20} & x_{26} & x_{32} \\
0 & 0 & 1 & 0 & 0 & 0 & x_{3} & x_{9} & x_{15} & x_{21} & x_{27} & x_{33}
 \\
0 & 0 & 0 & 1 & 0 & 0 & x_{4} & x_{10} & x_{16} & x_{22} & x_{28} & x_{34} \\
0 & 0 & 0 & 0 & 1 & 0 & x_{5} & x_{11} & x_{17} & x_{23} & x_{29} & x_{35} \\
0 & 0 & 0 & 0 & 0 & 1 & x_{6} & x_{12} & x_{18} & x_{24} & x_{30} & x_{36}
\end{array}
\right)
\]
where the columns correspond to the points $(7,8,9,11,12,6,3,2,5,1,4,10)$, respectively. We proceed the following steps as outlined in Procedure~\ref{procedure}.
\begin{itemize}
\item Observe that for $i\in \{7,8\}$, the point $p_{i}$ has degree zero on $M_{w}[i]$, implying that no condition is imposed on columns $7$ and $8$.
\item Since $\{6,11,12,5\}$ is the unique subspace of $M_{w}[9]$ containing the point $5$, we conclude that the column $9$ is dependent with the columns $4,5$ and $6$, hence expressed of form: 
$(0,0,0,y_{1},y_{2},y_{3})$. 

\item Since $\{1,2,7,8\}$ is the unique subspace of $M_{w}[10]$ containing the point $1$, column $10$ is dependent on columns $1,2$ and $8$, implying that the column $10$ is of form: 
$(z_{1},z_{2},z_{3}x_{9},z_{3}x_{10},z_{3}x_{11},z_{3}x_{12})$.
 
\item Since $\{1,2,3,4,5,6\}$ is the unique subspace of $M_{w}[11]$ containing the point $4$, the column $11$ is dependent on columns $6,7,8,9,10$. Consequently, column $11$ can be expressed as: 
\begin{equation}\label{11}
\begin{aligned}
&w_{1}\irow{x_{1},&x_{2},&x_{3},&x_{4},&x_{5},&0}+w_{2}\irow{x_{7},&x_{8},&x_{9},&x_{10},&x_{11},&0}+\\
&w_{3}\irow{0,&0,&0,&y_{1},&y_{2},&0}+w_{4}\irow{z_{1},&z_{2},&z_{3}x_{9},&z_{3}x_{10},&z_{3}x_{11},&0}+w_{5}\irow{0,&0,&0,&0,&0,&1}.
\end{aligned}
\end{equation}
\item We now turn to column $12$, representing point 
$10$, and  proceed as in the proof of Proposition~\ref{propo a1}.
\end{itemize}
{\bf Step 1.} The two subspaces containing point $10$ are $\{l_{1},l_{2}\}=\{\{7,8,9,10,11,12\},\{3,4,9,10\}\}$. We first fix bases for $l_{1}$ and $l_{2}$ that do not contain the point $10$. In this case, the bases are $\{7,8,9,11,12\}$ for $l_{1}$ and $\{3,4,9\}$ for $l_{2}$. We then extend the first basis to form $\mathcal{B}_{1}=\{7,8,9,11,12,6\}$, a basis of $M$. Next, we extend the independent set $\{3,4,9\}$ to obtain $\mathcal{B}_{2}=\{3,4,9,8,11,12\}$, another basis of $M$.

\medskip
\noindent{\bf Step 2.} Following the reasoning in the proof of Proposition~\ref{propo a1}(iii), we obtain that the intersection of the subspaces corresponding to $l_{1}$ and $l_{2}$ has a basis formed by the two vectors of indeterminates: 
\begin{equation}\label{equ gf}
\begin{aligned}
&(3\vee 4\vee 9)\wedge (7\vee 8\vee 9\vee 11\vee 12) \wedge (6\vee7\vee8\vee11\vee12),\\
&(3\vee4\vee 9)\wedge (7\vee8\vee 9\vee 11\vee 12) \wedge (6\vee 8\vee 9 \vee 11\vee 12),
\end{aligned}
\end{equation}
where the numbers in these expressions denote the vectors of indeterminates associated 
which each element of the matroid. Using that the wedge product of the Grassmann-Cayley algebra corresponds with taking the intersection of subspaces, we rewrite these vectors as: 
\begin{equation}\label{equ gf2}
\begin{aligned}
&v=(3\vee 4\vee 9)\wedge (7\vee 8\vee 11\vee 12) \quad\text{and}\quad
w=(3\vee4\vee 9)\wedge (8\vee 9\vee 11\vee 12).
\end{aligned}
\end{equation}
Note that the points $7,8,9,11,12$ correspond with the columns $1,2,3,4,5$, which correspond to the vectors $e_{1},\ldots,e_{5}$, respectively. It is straightforward to verify that $w=e_{3}$. For the vector $v$, using the expression of the wedge product of Equation~\eqref{equ wedge}, it follows that 
\begin{equation*}
\begin{aligned}
v&=(7\vee 8\vee 11\vee 12) \wedge (3\vee 4\vee 9)=(e_{1}\vee e_{2}\vee e_{4}\vee e_{5}) \wedge (3\vee 4\vee e_{3})\\
&=[e_{1},e_{2},e_{4},e_{5},3,4]e_{3}+[e_{1},e_{2},e_{3},e_{4},e_{5},3]4-[e_{1},e_{2},e_{3},e_{4},e_{5},4]3.
\end{aligned}
\end{equation*}
Thus, we conclude that the intersection of the subspaces corresponding to $l_{1}$ and $l_{2}$ has a basis composed of vectors of indeterminates:
\[z=[e_{1},e_{2},e_{3},e_{4},e_{5},3]4-[e_{1},e_{2},e_{3},e_{4},e_{5},4]3 \quad \text{and}  \quad e_{3}.\]
Replacing $3$ and $4$ with the columns $7$ and $11$ from the matrix of indeterminates, using the expression for the $11^{\text{th}}$ column from Equation~\eqref{11}, we arrive at the expression for $z$: 
\begin{equation}
\begin{aligned}
z&=x_{6}w_{1}\irow{x_{1},&x_{2},&x_{3},&x_{4},&x_{5},&0}+x_{6}w_{2}\irow{x_{7},&x_{8},&x_{9},&x_{10},&x_{11},&0}+x_{6}w_{3}\irow{0,&0,&0,&y_{1},&y_{2},&0}\\
&+x_{6}w_{4}\irow{z_{1},&z_{2},&z_{3}x_{9},&z_{3}x_{10},&z_{3}x_{11},&0}+x_{6}w_{5}\irow{0,&0,&0,&0,&0,&1}-w_{5}\irow{x_{1},&x_{2},&x_{3},&x_{4},&x_{5},&x_{6}}.
\end{aligned}
\end{equation}
Thus, we conclude that the $12^{\text{th}}$ column %of the matrix 
is of form $u_{1}z+u_{2}e_{3}$.
Hence, we explicitly describe $\text{Gr}(M,\CC)$ as the Zariski open subset of $\CC[x_{1},\ldots,x_{12},y_{1},y_{2},y_{3},z_{1},z_{2},z_{3},w_{1},\ldots,w_{5},u_{1},u_{2}]$ 
defined by the condition that none of the $6$-minors of the following matrix, which  corresponds to a basis element, vanish:
\[
\small{\left(
\begin{array}{cccccccccccc}
1&0&0 & 0 & 0 & 0 & x_{1} & x_{7} & 0 & z_{1}& {\scriptstyle w_{1}x_{1}+w_{2}x_{7}+w_{4}z_{1}}& {\scriptstyle u_{1}(x_{6}(w_{1}x_{1}+w_{2}x_{7}+w_{4}z_{1})-w_{5}x_{1}) }\\
0 & 1 & 0 & 0 & 0 & 0 & x_{2} & x_{8} & 0 & z_{2} & {\scriptstyle w_{1}x_{2}+w_{2}x_{8}+w_{4}z_{2} }  & {\scriptstyle u_{1}(x_{6}(w_{1}x_{2}+w_{2}x_{8}+w_{4}z_{2})-w_{5}x_{2}) } \\
0 & 0 & 1 & 0 & 0 & 0 & x_{3} & x_{9} & 0 & z_{3}x_{9} &{\scriptstyle  w_{1}x_{3}+w_{2}x_{9}+w_{4}z_{3}x_{9}}  & u_{2}
 \\
0 & 0 & 0 & 1 & 0 & 0 & x_{4} & x_{10} & y_{1} & z_{3}x_{10} & {\scriptstyle w_{1}x_{4}+w_{2}x_{10}+w_{3}y_{1}+w_{4}z_{3}x_{10}} & {\scriptstyle u_{1}(x_{6}(w_{1}x_{4}+w_{2}x_{10}+w_{3}y_{1}+w_{4}z_{3}x_{10})-w_{5}x_{4}) } \\
0 & 0 & 0 & 0 & 1 & 0 & x_{5} & x_{11} & y_{2} & z_{3}x_{11} & {\scriptstyle  w_{1}x_{5}+w_{2}x_{11}+w_{3}y_{2}+w_{4}z_{3}x_{11}} & {\scriptstyle  u_{1}(x_{6}(w_{1}x_{5}+w_{2}x_{11}+w_{3}y_{2}+w_{4}z_{3}x_{11})-w_{5}x_{5})} \\
0 & 0 & 0 & 0 & 0 & 1 & x_{6} & x_{12} & y_{3} & z_{3}x_{12} & w_{5} & 0
\end{array}
\right).
}\]
\end{example}

\subsection{Characterization of rigidity}\label{subsec:rigid}
The study of rigid matroids, those with a unique realization up to projective transformations, focuses on characterizing and identifying families of matroids with connected moduli spaces; see \cite{guerville2023connectivity}. Moreover, projective uniqueness has been explored in various contexts (see \cite{brandt2019slack, ziegler1990matroid, wenzel1991projective, miller2003unique, gouveia2020projectively}). In this subsection, we analyze the family of inductively connected matroids, offering a comprehensive characterization of their rigidity and local rigidity. We also provide a sufficient condition for inductively rigid property and characterize it for matroids that are both inductively connected and elementary split.

\medskip
Throughout this subsection, let  $M$ denote a matroid of rank $n$ on the ground set $[d]$.

\begin{definition}\normalfont
%Let $M$ be a matroid of rank $n$ on the ground set $[d]$. 
The  projective realization space of the matroid $M$ consists of all collections of points $\gamma=\{\gamma_{1},\ldots,\gamma_{d}\}\subset \PP^{n-1}$ that satisfy the following condition:
\begin{equation*}
\{\gamma_{i_{1}},\ldots,\gamma_{i_{k}}\}\ \text{are linearly dependent} \Longleftrightarrow \{i_{1},\ldots,i_{k}\} \ \text{is dependent in $M$}.
\end{equation*}
The {\em moduli space} $\mathcal{M}(M)$ of the matroid $M$ is then the quotient of this realization space by the action of the projective general linear group $\text{PGL}_{n}(\CC)=\text{GL}_{n}(\CC)/\CC^{\ast}$.
\end{definition}

\begin{remark}\label{dim}
If $M$ is realizable and contains an $n+1$ circuit, then 
$$\dim(\mathcal{M}(M))=\dim(\Gamma_{M})-(n^{2}-1+d).
$$
From now on, we will assume this hypothesis.
\end{remark}

We now present the classes of matroids known as {\em rigid} and {\em locally rigid}. 

\begin{definition}\normalfont \label{defi rigid}
Let $M$ be a realizable matroid. Then $M$ is called {\em rigid} if $\size{\mathcal{M}(M)}=1$, meaning there is a unique realization of $M$ up to projective transformation.
It is called {\em locally rigid} if $\mathcal{M}(M)$ is finite, or equivalently, if $\dim(\mathcal{M}(M))=0$.
\end{definition}

We now present a characterization of {\em rigid} and {\em locally rigid} inductively connected matroids.

\begin{proposition}\label{propo rig}
Let $M$ be a realizable inductively connected matroid on $[d]$ with $w=(p_{1},\ldots,p_{d})$ an ordering of its elements as in Equation~\eqref{neweq}. The following statements are equivalent:
\begin{itemize}
\item[{\rm (i)}] $M$ is rigid. 
%\item[{\rm (ii)}] 
\quad{\rm (ii)} $M$ is locally rigid.
%\item[
\quad {\rm (iii)} $\textstyle \sum_{i=1}^{d}\widetilde{\tau}_{i}=n^{2}-1+d.$
\end{itemize}
Moreover, if $M$ is elementary split, then condition~\textup{(iii)} is equivalent to:
\begin{itemize}
   \item[{\rm (iv)}] $\dnaive(\Gamma_{M})=n^{2}-1+d.$
\end{itemize}
\end{proposition}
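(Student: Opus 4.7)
\medskip
\noindent\textbf{Proof proposal.} The plan is to establish the chain (i) $\Rightarrow$ (ii) $\Leftrightarrow$ (iii) $\Rightarrow$ (i), and then, under the elementary split hypothesis, (iii) $\Leftrightarrow$ (iv). The implication (i) $\Rightarrow$ (ii) is immediate since a singleton is finite.

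For (ii) $\Leftrightarrow$ (iii), I would combine Theorem~\ref{gen}, which implies $\dim(\Gamma_M) = \sum_{i=1}^{d}\widetilde{\tau}_{i}$ (as $\Gamma_M$ is a nonempty Zariski open subset of an affine space of that dimension, using realizability), with Remark~\ref{dim}, which gives $\dim(\mathcal{M}(M)) = \dim(\Gamma_M) - (n^{2}-1+d)$. Local rigidity is precisely the vanishing of $\dim(\mathcal{M}(M))$, so (ii) becomes $\sum_{i=1}^{d}\widetilde{\tau}_{i} = n^{2}-1+d$, which is~(iii). The equivalence (iii) $\Leftrightarrow$ (iv) in the elementary split case is then immediate from Lemma~\ref{geno 2}, which states $\sum_{i=1}^{d}\widetilde{\tau}_{i} = \dnaive(\Gamma_{M})$ in this setting.

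The main obstacle is (iii) $\Rightarrow$ (i), i.e., upgrading local rigidity to rigidity. Here I would exploit irreducibility: by Corollary~\ref{coro pa}, $\Gamma_M$ is irreducible. The moduli space $\mathcal{M}(M)$ is the quotient of $\Gamma_M$ by the action of a \emph{connected} algebraic group, namely $\text{GL}_{n}(\mathbb{C})$ acting by left multiplication together with $(\mathbb{C}^{\ast})^{d}$ acting by independent rescaling of the columns, modulo the diagonal $\mathbb{C}^{\ast}$. Since the continuous image of an irreducible space is irreducible, $\mathcal{M}(M)$ is irreducible. Assuming (iii) and using the equivalence with~(ii), $\mathcal{M}(M)$ is zero-dimensional, hence finite; an irreducible finite set has exactly one element, giving~(i).
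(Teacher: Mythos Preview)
Your proposal is correct and follows essentially the same route as the paper's proof: both combine Theorem~\ref{gen} with Remark~\ref{dim} to get the equivalence (ii) $\Leftrightarrow$ (iii), invoke Lemma~\ref{geno 2} for (iii) $\Leftrightarrow$ (iv), and for (iii) $\Rightarrow$ (i) use that $\Gamma_M$ is irreducible (hence $\mathcal{M}(M)$ is connected as a quotient by a connected group), so a finite $\mathcal{M}(M)$ must be a singleton. The paper phrases this last step slightly more tersely, saying that Theorem~\ref{gen} ``ensures that $\mathcal{M}(M)$ is connected,'' whereas you spell out the group action and use irreducibility explicitly, but the substance is identical.
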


\begin{proof}
First, assume that $M$ is locally rigid. By definition,  $\dim(\mathcal{M}(M))=0$. By Theorem~\ref{gen}, \[\textstyle \sum_{i=1}^{d}\widetilde{\tau}_{i}=\dim(\Gamma_{M})=\dim(\mathcal{M}(M))+n^{2}-1+d=n^{2}-1+d.\]
Conversely, suppose that $\textstyle \sum_{i=1}^{d}\widetilde{\tau}_{i}=n^{2}-1+d$. By Theorem~\ref{gen}, we deduce that $\dim(\mathcal{M}(M))=0$, meaning $\mathcal{M}(M)$ is finite. Since Theorem~\ref{gen} also ensures that $\mathcal{M}(M)$ is connected, we conclude $\lvert \mathcal{M}(M)\rvert =1$, implying that $M$ is rigid. Now, assume that $M$ is both inductively connected and elementary split. In this case, the equivalence of (iii) and (iv) follows directly from Lemma~\ref{geno 2}.
\end{proof}

\begin{example}\label{qs}
Consider the quadrilateral configuration $\text{QS}$ depicted in Figure~\ref{fig:combined} (Right), which is the paving matroid of rank $3$ on the ground set $\{p_{1},\ldots,p_{6}\}$ with the following set of lines: \[\mathcal{L}_{QS}=\{\{p_1,p_2,p_3\},\{p_1,p_5,p_{6}\},\{p_3,p_4,p_5\},\{p_2,p_4,p_6\}\}.\]
Since all points have degree $2$, $QS$ is inductively connected. Applying Equation~\eqref{naive pav}, we have that $\dnaive(\Gamma_{QS})=14$. Consequently, Proposition~\ref{propo rig} confirms that the matroid $\text{QS}$ is rigid.
\end{example}

We now define {\em inductively rigid} %and  {\em inductively locally rigid} 
matroids,  originally introduced in \cite{guerville2023connectivity} for line arrangements.

\begin{definition}\normalfont\label{inductive}
A matroid $M$ is called \textit{inductively rigid} if there exists an ordering $w=(p_{1},\ldots,p_{d})$ of its elements such that $M_{w}[i]$ is rigid for all $i\in [d]$ and $\{p_{1},\ldots,p_{n+1}\}$ forms a circuit of $M$. 
\end{definition}

We present a sufficient condition for a rigid inductively connected matroid to be inductively rigid.

\begin{proposition}\label{proposition rigid 2}
Let $M$ be a rigid, inductively connected matroid of rank $n$, with an ordering $w=(p_{1},\ldots,p_{d})$ % as in %Equation~\eqref{neweq}, 
such that $\{p_{1},\ldots,p_{n+1}\}$ forms a circuit of $M$. Then $M$ is inductively rigid.
\end{proposition}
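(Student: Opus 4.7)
The plan is to show that the very ordering $w=(p_1,\ldots,p_d)$ given in the hypothesis witnesses the inductive rigidity of $M$, that is, that $M_w[i]$ is rigid for every $i\in[d]$. For $i\leq n$, the set $\{p_1,\ldots,p_n\}$ being a basis of $M$ forces $M_w[i]$ to coincide with the free matroid $U_{i,i}$, whose moduli space is a single point, so these cases are immediate. Hence the real content lies in the range $i\geq n+1$. For such $i$, $M_w[i]$ is realizable (as a restriction of $M$), inductively connected under the restricted ordering, has rank $n$, and contains the $(n+1)$-circuit $\{p_1,\ldots,p_{n+1}\}$; thus Proposition~\ref{propo rig} is applicable to $M_w[i]$, and rigidity of $M_w[i]$ is equivalent to the dimensional identity $\sum_{j=1}^{i}\widetilde{\tau}_j=n^2-1+i$.

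Next I would handle the base case $i=n+1$. For $j\leq n$, no dependences exist among the independent set $\{p_1,\ldots,p_j\}$, so $p_j$ has degree $0$ in $M_w[j]$ and $\tau_j=n$. The crucial observation is that the hypothesis $\{p_1,\ldots,p_{n+1}\}\in\mathcal{C}(M)$ forces this circuit to have size exactly $n+1$, which is too large to define a subspace in the sense of Definition~\ref{general} (subspaces come only from circuits of size at most $n$). Consequently $p_{n+1}$ also has degree $0$ in $M_w[n+1]$ and $\widetilde{\tau}_{n+1}=n$. Summing gives $\sum_{j=1}^{n+1}\widetilde{\tau}_j=n(n+1)=n^2-1+(n+1)$, proving rigidity of $M_w[n+1]$.

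To treat $i\geq n+2$, I would prove the sharper claim that $\widetilde{\tau}_j=1$ for every $j\geq n+2$, from which rigidity of each $M_w[i]$ telescopes immediately via
\[\sum_{j=1}^{i}\widetilde{\tau}_j=n^2+n+(i-n-1)=n^2-1+i.\]
Since $M$ itself is rigid, Proposition~\ref{propo rig} applied to $M$ yields $\sum_{j=1}^{d}\widetilde{\tau}_j=n^2-1+d$, and subtracting the partial sum $n^2+n$ already computed gives
\[\sum_{j=n+2}^{d}\widetilde{\tau}_j=d-n-1,\]
which equals exactly the number of summands. Hence it would suffice to establish the lower bound $\widetilde{\tau}_j\geq 1$ for each $j\geq n+2$.

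This lower bound, which I expect to be the main technical step, is handled by a short case analysis on $\deg(p_j)\in\{0,1,2\}$ in $M_w[j]$ (at most $2$ by inductive connectivity). If the degree is $0$, then $\widetilde{\tau}_j=n\geq 1$. If the degree is $1$ with subspace $l$, then $\widetilde{\tau}_j=\rank(l)\geq 1$, using that $p_j\in l$ is not a loop (projective realizability rules out loops). If the degree is $2$ with subspaces $l_1,l_2$, then by submodularity of the rank function,
\[\widetilde{\tau}_j=\rank(l_1)+\rank(l_2)-\rank(l_1\cup l_2)\geq \rank(l_1\cap l_2)\geq 1,\]
since $p_j\in l_1\cap l_2$ is not a loop. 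With the inequality in hand, equality of sum and count forces $\widetilde{\tau}_j=1$ for all $j\geq n+2$, and the proof is complete.
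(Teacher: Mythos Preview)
Your proof is correct and follows essentially the same route as the paper's: use rigidity of $M$ together with Theorem~\ref{gen} to get $\sum_{i=1}^d\widetilde{\tau}_i=n^2-1+d$, compute the partial sum through $i=n+1$ as $n^2+n$, deduce $\widetilde{\tau}_j=1$ for $j\geq n+2$ from positivity, and then apply Proposition~\ref{propo rig} to each $M_w[i]$. The only substantive difference is that the paper simply asserts ``since the numbers $\widetilde{\tau}_i$ are strictly positive,'' whereas you supply the case analysis (degree $0$, $1$, $2$, using submodularity in the last case) that actually justifies this; your version is more self-contained in that respect.
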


\begin{proof}
By Theorem~\ref{gen} and by the rigidity of $M$, we have the following equality:
\begin{equation}\label{rig tau}
n^{2}-1+d=\dim( \Gamma_{M})=\textstyle \sum_{i=1}^{d}\widetilde{\tau}_{i}=n^{2}+n+\sum_{i=n+2}^{d}\widetilde{\tau}_{i}.
\end{equation}
Since the numbers $\widetilde{\tau}_{i}$ are strictly positive, \eqref{rig tau} implies that $\widetilde{\tau}_{i}=1$ for all $i\geq n+2$. Moreover, as $M_{w}[i]$ is inductively connected for each $i\geq n+2$, Theorem~\ref{gen} yields 
$\textstyle{\dim( \Gamma_{M_{w}[i]})=\sum_{j=1}^{i}\widetilde{\tau}_{j}=n^{2}+i-1}.$
Thus, by Proposition~\ref{propo rig}, it follows that $M_{w}[i]$ is rigid for all $i\geq n+2$. Moreover, since $\{p_{1},\ldots,p_{n+1}\}$ forms a circuit, $M_{w}[i]$ is also rigid for all $i\in [n+1]$. Thus, $M$ is inductively rigid.
\end{proof}

We now present a characterization of inductively rigid matroids, when they are both inductively connected and elementary split.

\begin{proposition}\label{prop loc}
Let $M$ be a realizable inductively connected elementary split matroid. Then, $M$ is inductively rigid if and only if there exists an ordering $w=(p_{1},\ldots,p_{d})$ of its elements such that:
$\{p_{1},\ldots,p_{n+1}\}$ forms a circuit, and 
$\tau_{i}=1$ for all $i\geq n+2$.
\end{proposition}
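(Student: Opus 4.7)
The plan is to prove both implications by invoking Proposition~\ref{proposition rigid 2} together with Proposition~\ref{propo rig}(iii); throughout, the ordering $w$ in Proposition~\ref{prop loc} is understood as an inductively connected ordering of $M$ (the setting in which $\widetilde{\tau}_i$ is defined and in which Proposition~\ref{proposition rigid 2} applies). Under this interpretation, Lemma~\ref{split is simple} guarantees that every point of degree at most two in an elementary split matroid is $2$-simple, so $\widetilde{\tau}_i = \tau_i$ for every $i$. A preliminary observation, needed in both directions, is that when $\{p_1,\ldots,p_{n+1}\}$ is a circuit one has $\tau_j = n$ for all $j \leq n+1$: for $j \leq n$ the restriction $M_w[j]$ is a free matroid, and for $j = n+1$ the only circuit of $M_w[n+1] = U_{n,n+1}$ has size $n+1$, hence is not a subspace. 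In particular $(\mathcal{L}_w[j])_{p_j} = \emptyset$ in both cases.

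For the implication $(\Leftarrow)$, given such $w$ with $\tau_i = 1$ for $i \geq n+2$, the preliminary calculation gives
\[
\sum_{i=1}^d \tau_i \;=\; n(n+1) + (d-n-1) \;=\; n^2 - 1 + d.
\]
Lemma~\ref{geno 2} identifies this sum with $\dnaive(\Gamma_M)$, so Proposition~\ref{propo rig}(iv) yields that $M$ is rigid. With $M$ rigid, inductively connected, and $w$ an inductively connected ordering whose first $n+1$ elements form a circuit, Proposition~\ref{proposition rigid 2} applies directly to conclude that $M$ is inductively rigid.

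For the implication $(\Rightarrow)$, suppose $M$ is inductively rigid. Then $M$ itself is rigid, and its inductively rigid ordering exhibits an $(n+1)$-circuit $C$. Using the inductively connected hypothesis on $M$, I construct an inductively connected ordering $w$ of $M$ whose first $n+1$ elements are the points of $C$: any enumeration of $C$ places a basis in positions $1,\ldots,n$; the $(n+1)$-th point $p_{n+1}$ lies in no subspace of $M_w[n+1]$; and the remaining $d-n-1$ elements can be arranged into an inductively connected continuation by leveraging the inductively connected structure of $M$. With this $w$, rigidity together with Theorem~\ref{gen} gives $\sum_{i=1}^d \widetilde{\tau}_i = \dim\Gamma_M = n^2-1+d$; combined with the preliminary calculation $\sum_{i\leq n+1}\widetilde{\tau}_i = n(n+1)$, this yields $\sum_{i\geq n+2}\widetilde{\tau}_i = d-n-1$. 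Since each $\widetilde{\tau}_i$ is strictly positive (when $\deg(p_i)=2$ the intersection $\gamma_{l_1}\cap\gamma_{l_2}$ always contains $\gamma_{p_i}$, so $\widetilde{\tau}_i \geq 1$; the remaining cases are immediate), we conclude $\widetilde{\tau}_i = 1$ for all $i\geq n+2$, and Lemma~\ref{split is simple} converts this to $\tau_i = 1$.

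The main obstacle is the existence step in the forward direction: producing an inductively connected ordering of $M$ whose first $n+1$ elements form a prescribed $(n+1)$-circuit. The prefix of length $n+1$ is trivially compatible with the inductively connected condition, but extending it to a full inductively connected ordering of $M$ requires an exchange-type argument that combines the inductively connected hypothesis on $M$ with the structural constraints of elementary split matroids (in the spirit of Lemma~\ref{conectados}). This step constitutes the genuine technical content of the forward direction and is implicit in the hypothesis of Proposition~\ref{proposition rigid 2}; once it is established, the $\widetilde{\tau}$-bookkeeping outlined above makes the rest of the argument essentially mechanical.
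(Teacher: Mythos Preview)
Two issues, one minor and one substantive.

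First, you begin by reinterpreting the statement: you require the ordering $w$ in Proposition~\ref{prop loc} to satisfy Equation~\eqref{neweq}. The proposition does not say this, and the paper's proof does not assume it. For $n\ge 4$ the condition $\tau_i=1$ does \emph{not} force $|(\mathcal{L}_w[i])_{p_i}|\le 2$ (three subspaces whose ranks sum to $2n+1$ also give $\tau_i=1$), so your reinterpretation is a genuine extra constraint. Under it, your $(\Leftarrow)$ via Proposition~\ref{proposition rigid 2} is perfectly correct and indeed a bit slicker than the paper's direct verification that every $M_w[i]$ is rigid; but you have assumed more than the proposition grants.

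Second, and more seriously, your $(\Rightarrow)$ hinges on manufacturing an inductively connected ordering of $M$ whose first $n+1$ elements form a \emph{prescribed} circuit, and you explicitly leave this construction open. The paper's argument bypasses this problem altogether by taking $w$ to be the ordering furnished by Definition~\ref{inductive} itself. The key point is that inductive rigidity gives you rigidity of \emph{every} prefix $M_w[i]$, not merely of $M$. Since each $M_w[i]$ is also elementary split (restrictions of elementary split matroids remain elementary split) and inductively connected (inherited from $M$), Proposition~\ref{propo rig} applied to $M_w[i]$ yields $\dnaive(\Gamma_{M_w[i]})=n^{2}+i-1$ for every $i\ge n+1$. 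Proposition~\ref{igual 3} (equality case for elementary split) then gives $\sum_{j\le i}\tau_j=n^{2}+i-1$ for each such $i$, and subtracting consecutive values produces $\tau_i=1$ directly. No new ordering is constructed, no positivity argument on $\widetilde\tau_i$ is needed, and the ordering $w$ that witnesses the conclusion is exactly the one from the inductively rigid hypothesis. Your approach uses only the rigidity of $M$ and therefore has to recover the individual $\tau_i$ by other means; the paper exploits the full strength of inductive rigidity.
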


\begin{proof}
Assume there exists an ordering $w$ satisfying the specified conditions. Since $\{p_{1},\ldots,p_{n+1}\}$ forms a circuit, it follows that $M_{w}[i]$ is rigid for all $i\in [n+1]$. For each $i\geq n+2$, we have $\dnaive(\Gamma_{M_{w}[i]})=n^{2}+i-1$, 
by Proposition~\ref{igual 3}. Since the submatroid $M_{w}[i]$ is inductively connected and elementary simple, Proposition~\ref{propo rig} implies that $M_{w}[i]$ is rigid. Therefore, $M$ is inductively rigid.

Now, suppose $M$ is inductively rigid,
and $w=(p_{1},\ldots,p_{d})$ is an ordering as in Definition~\ref{inductive}. For each $i\geq n+1$, 
$M_{w}[i]$ is inductively connected, elementary split, and rigid. Then, by Proposition~\ref{propo rig}, it follows that $\dnaive(\Gamma_{M_{w}[i]})=n^{2}+i-1$ for every $i\geq n+1$, and by Proposition~\ref{igual 3},
$\textstyle{\sum_{j=1}^{i}\tau_{j}=n^{2}+i-1}$. 
This implies that $\tau_{i}=1$ for all $i\geq n+2$. Hence, $w$ satisfies the desired conditions.
\end{proof}

\begin{example}
Consider the matroid $M$ of rank $4$ defined on the ground set $[6]$ with the set of subspaces $\mathcal{L}_M=\{\{1,2,3,6\},\{4,5,6\}\}$, which, in this case, coincides with the set of circuits of size at most $4$.
The matroid $M$ is elementary split. The ordering $w=(1,2,3,4,5,6)$ satisfies the conditions in~\eqref{neweq}, confirming that $M$ is inductively connected. We have that $\tau(M,w)=(4,4,4,4,4,1)$. 
Since 
$\{1,2,3,4,5\}$ forms a circuit and $\tau_{6}=1$, it follows from Proposition~\ref{prop loc} that $M$ is inductively rigid.
\end{example}

\begin{example}
Consider the configuration $\text{QS}$ from Example~\ref{qs}. This matroid is elementary split since it is a paving matroid. For the ordering $w=(p_2,p_3,p_5,p_6,p_1,p_4)$, we have that $\tau(M,w)=(3,3,3,3,1,1)$. By applying Proposition~\ref{prop loc}, we conclude that $\text{QS}$ is inductively rigid.
\end{example}

\printbibliography

\end{document}